\documentclass[11pt]{amsart}
\usepackage{amssymb,amscd,graphics,verbatim,mathrsfs,latexsym,amsmath,amsthm,eucal}


\addtolength{\hoffset}{-1.1cm}




\addtolength{\textwidth}{2.2cm}
\makeatletter
\@addtoreset{equation}{section}\makeatother


\newtheorem{theo}{Theorem}[section]
\newtheorem{lem}[theo]{Lemma}
\newtheorem{prop}[theo]{Proposition}
\newtheorem{cor}[theo]{Corollary}
\newtheorem{remark}[theo]{Remark}

\newcommand{\mc}{\mathcal}
\newcommand{\rr}{\mathbb{R}}
\newcommand{\nn}{\mathbb{N}}
\newcommand{\cc}{\mathbb{C}}
\newcommand{\hh}{\mathbb{H}}

\newcommand{\eps}{\epsilon}

\newcommand{\pl}{\partial}
\newcommand{\x}{\times}

\newcommand{\til}{\widetilde}
\newcommand{\bbar}{\overline}

\newcommand{\supp}{\textrm{supp}}
\newcommand{\cjd}{\rangle}
\newcommand{\cjg}{\langle}

\newcommand{\demi}{\frac{1}{2}}
\newcommand{\ndemi}{\frac{n}{2}}

\newcommand{\indic}{\operatorname{1\negthinspace l}}


\newcommand{\ZZ}{\mathbb{Z}}

\newcommand{\del}{\partial}

\newcommand{\la}{\lambda}

\newcommand{\calC}{{\mathcal C}}

\newcommand{\calF}{{\mathcal F}}

\newcommand{\calH}{{\mathcal H}}
\newcommand{\calI}{{\mathcal I}}

\newcommand{\calL}{{\mathcal L}}

\newcommand{\calO}{{\mathcal O}}

\newcommand{\calU}{{\mathcal U}}
\newcommand{\calV}{{\mathcal V}}

\title[Resolvent of the Laplacian]{Resolvent of the Laplacian \\ on geometrically finite hyperbolic manifolds}
\author{Colin Guillarmou}
\address{DMA, U.M.R. 8553 CNRS\\
Ecole Normale Sup\'erieure\\
45 rue d'Ulm\\ 
F 75230 Paris cedex 05 \\France}
\email{cguillar@dma.ens.fr}
\author{Rafe Mazzeo}
\address{Department of Mathematics\\
Stanford University\\
Stanford, CA 94305, USA.}
\email{mazzeo@math.stanford.edu.}

\begin{document}
%
\begin{abstract}
For geometrically finite hyperbolic manifolds $\Gamma\backslash\hh^{n+1}$, 
we prove the meromorphic extension of the resolvent of Laplacian, 
Poincar\'e series, Einsenstein series and scattering operator to the whole complex plane. 
We also deduce the asymptotics of lattice points of $\Gamma$ in large balls of $\hh^{n+1}$ in terms of 
the Hausdorff dimension of the limit set of $\Gamma$.
\end{abstract}
\maketitle

\section{Introduction}
Analysis of the Laplace operator on $(n+1)$-dimensional hyperbolic manifolds which satisfy a geometric finiteness
condition commenced in earnest in the early 1980's, inspired by numerous results in the finite volume setting, as well as 
some extensions of this, by Roelcke and Patterson, to infinite area geometrically finite surfaces.  The paper of Lax and Phillips 
\cite{LP}, see also \cite{LP2}, shows that the spectrum of the Laplacian on such spaces is equal to $[n^2/4,\infty)\cup S$ 
where $S\subset (0,n^2/4)$ is a finite set of $L^2$-eigenvalues, each of finite multiplicity, and also that the essential spectrum 
is entirely absolutely continuous. One important geometric motivation 
in their work was to deduce sharp asymptotics of the lattice point counting function for a geometrically finite group 
of isometries $\Gamma$ of $\hh^{n+1}$, under certain assumptions on the dimension of the limit set of $\Gamma$. 
This was followed by an extensive development by many authors concerning the special class of geometrically finite quotients which are convex cocompact, i.e.\ where $\Gamma$ has no parabolic elements. In particular, the second author and Melrose \cite{MM} proved
in the more general setting of  asymptotically hyperbolic metrics that the resolvent $R_X(s)=(\Delta_X-s(n-s))^{-1}$ has a meromorphic 
continuation with poles of finite rank in $s\in \cc\setminus \{\ndemi -\demi \nn\}$. Actually, that paper claimed erroneously that
the continuation is meromorphic in all of $\mathbb C$. This was rectified by the first author \cite{GuiDMJ}, who proved that the metric 
must have an even Taylor expansion at the boundary (in a suitable sense) in order for the resolvent to be meromorphic with 
finite rank poles also near $\ndemi-\demi\nn$. Very recently, Vasy \cite{Vas} found a new proof of this meromorphic extension
for asymptotically hyperbolic metrics which satisfy the same evenness condition; in particular, he obtains high frequency 
(semi-classical) estimates for $R_X(s)$ in the non-trapping case.  Guillop\'e and Zworski \cite{GZ2} gave a simpler proof of the 
main result of \cite{MM} assuming that the curvature is constant near infinity. Their technique is reviewed below. 
This extension of $R_X(s)$ was a main step in the development of scattering theory on these spaces, which is an important area in its 
own right, but also a means to prove various trace formul\ae, and a fundamental tool to analyze divisors of Selberg zeta function 
$Z_X(s)$ and lengths of closed geodesics on $X$. Analysis of these divisors for convex cocompact  hyperbolic manifolds was carried 
out in great detail by Patterson-Perry \cite{PP} and Bunke-Olbrich \cite{BO}; in this setting, a trace formula relating poles of the resolvent 
(called \emph{resonances}) and lengths of closed geodesics is also known \cite{Pe2,GN}, a sharp asymptotic 
with remainder for the counting function for lengths of closed geodesics is given in \cite{GN} (earlier versions without estimates
on the remainder appear in \cite{Mar,PaPol,GuDMJ,La,Pe}), and estimates on the distributions of resonances in $\cc$ are proved in \cite{GZ2,Pe2, Bo}.  

The two-dimensional case is special because the geometry is much simpler, and all of these results for geometrically finite surfaces
are essentially contained in the work of Guillop\'e-Zworski \cite{GZ} and Borthwick-Judge-Perry \cite{BJP}; the book by Borthwick 
\cite{Bo2} contains a unified treatment of this material.  In higher dimension, the geometry of nonmaximal rank cusps is
 more complicated, as we explain below, and this makes the analysis substantially more delicate.  The first work in
this generality was by the second author and Phillips \cite{MP}, where the spaces of $L^2$ harmonic differential forms were studied
and interpreted in topological terms. Subsequently, Froese-Hislop-Perry \cite{FHP} proved the existence of a meromorphic extension 
to $s\in \cc$ of the scattering operator and resolvent for geometrically finite hyperbolic $3$-dimensional manifolds, and recently
the first author \cite{GuiCPDE} extended this to higher dimensions when the cusps are `rational' (see below) and gave 
a bound on the counting function for resonances. The case left open is when there are nonmaximal rank cusps with irrational holonomy 
(this never happens in three dimensions). By contrast with these analytic approaches, Bunke and Olbrich \cite{BO1} developed representation 
theoretic methods to study the scattering operator in the general geometrically finite setting and proved its meromorphic extension  to $\cc$;
they do not study the resolvent. Their paper is a revision of an older treatment they gave of this subject, but contains a substantially new 
exposition; their approach is quite technical and may be inaccessible for those without the representation theory background. 
In any case, their techniques are of a completely different nature to ours and it is not simple to even compare the results. 

We consider here a general geometrically finite hyperbolic manifold $X:=\Gamma\backslash \hh^{n+1}$ and give a rather short proof 
of the meromorphic extension of the resolvent of the Laplacian $\Delta_X$ to $s\in\cc$, as well as applications 
to scattering theory and distributions of lattice points in $\hh^{n+1}$ in the spirit of Lax-Phillips. 
\begin{theo}\label{th1}
Let $X=\Gamma\backslash \hh^{n+1}$ be a geometrically finite hyperbolic manifold and $\Delta_X$ its Laplacian.
Then the resolvent $R_X(s):=(\Delta_X-s(n-s))^{-1}$, defined initially as a bounded operator on $L^2(X)$ for $\{{\rm Re}(s)>n/2, 
s(n-s)\notin S\}$, extends to a family of continuous mappings $\calC^\infty_0(X) \to \calC^\infty(X)$ which depends 
meromorphically on $s\in\cc$, with poles of finite rank. In addition, $R_X(s)$ is bounded on appropriate weighted Sobolev 
spaces (see Theorem \ref{mainth} for a precise statement). 
\end{theo}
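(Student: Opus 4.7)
The plan is to build a meromorphic parametrix $Q(s):\calC_0^\infty(X)\to\calC^\infty(X)$ for $\Delta_X-s(n-s)$, that is, a family satisfying $(\Delta_X-s(n-s))Q(s)=I-K(s)$ with $K(s)$ compact on appropriate weighted Sobolev spaces and meromorphic in $s\in\cc$. The analytic Fredholm theorem then produces the meromorphic extension of $R_X(s)$ with poles of finite rank. By geometric finiteness, $X$ decomposes as a compact core, finitely many funnels $F_i$, and finitely many cusps $C_j$ of ranks $k_j\in\{1,\ldots,n\}$, so it suffices to construct local parametrices on each piece and glue them with a smooth partition of unity.

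On the core and on each funnel I would follow the Guillop\'e--Zworski strategy \cite{GZ2}: lift to the universal cover $\hh^{n+1}$, use its explicit resolvent $R_{\hh^{n+1}}(s)$, which is holomorphic in $s\in\cc$, and localize to obtain a parametrix whose error is smoothing and compactly supported. The substantive work lies in the cusps. Each cusp $C_j$ is modelled on $\Gamma_j\backslash\hh^{n+1}$ with $\Gamma_j$ the parabolic stabilizer of the corresponding fixed point; in horocyclic coordinates, $C_j$ has the form $(0,\infty)_y\times(\rr^{k_j}/\Lambda_j)\times\rr^{n-k_j}$ with the standard upper half-space metric. Fourier expanding in the torus variable gives a decomposition of the model Laplacian as a direct sum over characters $\alpha\in\Lambda_j^*$ of the reduced operators
\[
L_\alpha:=-y^2\partial_y^2+(n-1)y\partial_y+y^2\bigl(|\alpha|^2+\Delta_v^{\mathrm{flat}}\bigr).
\]
For $\alpha=0$, a conjugation by $y^{k_j/2}$ identifies $L_0$ (modulo a shift $s\mapsto s-k_j/2$ in the spectral parameter) with the Laplacian of $\hh^{n-k_j+1}$, so its resolvent extends meromorphically to $\cc$ by induction on the dimension.

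For $\alpha\neq 0$ the term $y^2|\alpha|^2$ acts as a confining potential in the cusp direction, so each $L_\alpha$ has resolvent holomorphic in $s\in\cc$ with norm decaying as a negative power of $|\alpha|$. The central difficulty --- and the case left open in \cite{FHP,GuiCPDE} --- is to sum these mode-by-mode parametrices over $\alpha\in\Lambda_j^*\setminus\{0\}$ so as to produce an operator with the required mapping properties, without any arithmetic hypothesis on the lattice $\Lambda_j$. My plan is to construct a single parametrix uniform in $\mu\geq 0$ for the one-parameter family $-y^2\partial_y^2+(n-1)y\partial_y+y^2(\mu+\Delta_v^{\mathrm{flat}})$, then specialize to $\mu=|\alpha|^2$ and sum using Plancherel on the torus together with the quantitative decay in $\mu$. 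Once each $Q_{C_j}(s)$ is assembled this way and patched with the funnel and core parametrices via cut-offs, the commutator error $K(s)$ is compact and meromorphic in $s$, and analytic Fredholm theory yields Theorem \ref{th1}.
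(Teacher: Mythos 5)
Your overall strategy — a local parametrix construction glued with cut-offs, followed by analytic Fredholm theory — matches the paper's and is sound in outline. But the model you propose for the cusp ends is wrong in precisely the situation that the paper sets out to resolve, and as a result your plan misidentifies the actual obstruction.

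You write the model cusp as $(0,\infty)_y\times(\rr^{k_j}/\Lambda_j)\times\rr^{n-k_j}$, i.e.\ a genuine metric product, and then Fourier-decompose in the torus factor to produce reduced operators $L_\alpha$ with parameter $|\alpha|^2+\Delta_v^{\mathrm{flat}}$ whose bottom thresholds $|\alpha|^2$, $\alpha\in\Lambda_j^*$, form a discrete set bounded away from zero except for $\alpha=0$. This product model is valid (after passing to a finite-index subgroup) only when the parabolic group $\Gamma_j$ is \emph{rational}, i.e.\ when its rotational parts $A_\gamma\in\mathrm{O}(n-k_j)$ have finite order. That case was already handled in \cite{GuiCPDE}. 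In general, $\gamma$ acts by $(x,y,z)\mapsto(x,A_\gamma y,z+v_\gamma)$ and the quotient of the horosphere is not $(\rr^{k}/\Lambda)\times\rr^{n-k}$ but a flat vector bundle $F\to T$ over the torus with holonomy in $\mathrm{O}(n-k)$ of \emph{infinite} image; no finite cover trivializes it. The correct spectral decomposition — which the paper carries out in \S 4 — involves a triple index $I=(m,p,v^*)$: the degree $m$ of spherical harmonics on $S^{n-k-1}$, the one-dimensional weight $p$ of the restriction of $H_m$ to the holonomy torus $K\subset\mathrm{SO}(n-k)$, and $v^*\in\Lambda^*$. The reduced operator $\Delta_I$ has threshold $b_I^2$ with $b_I=2\pi|A_{mp}+v^*|$, where $A_{mp}$ depends on the irrational holonomy angles. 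When the holonomy is irrational, the set $\{b_I\}$ accumulates at $0$ without ever vanishing (except for a small exceptional family), and the resolvent kernel of the model cusp involves complex powers of $\Delta_F$ that blow up there. This accumulation of thresholds at $0$ is the genuine analytic difficulty; your product model simply does not see it, which is why your ``sum over $\alpha$'' step looks easy.

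Consequently the ``single parametrix uniform in $\mu\geq 0$'' step, while plausible for a separated family of thresholds, does not address what actually needs to be controlled. The paper's essential new input is the observation that the spectral measure $dE_I(t)$ of $\Delta_I-b_I^2$ vanishes like $t^{2m+n-k-2}$ as $t\searrow 0$; because the thresholds $b_I\to 0$ can only occur along a sequence of indices with $m\to\infty$, this vanishing supplies exactly the decay needed to make the operator norms summable after multiplying by the weight $x^N$ (see the proof of Proposition~\ref{firstcase}, especially the estimates around \eqref{besselvanish}). Without the flat-bundle decomposition and this cancellation mechanism, the continuation past ${\rm Re}(s)=n/2$ does not close, so your proposal leaves a genuine gap at the core of the theorem. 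Also a small point: the obstruction you call ``arithmetic hypothesis on the lattice $\Lambda_j$'' is not a condition on $\Lambda_j$; it is whether the rotational holonomy $\{A_\gamma\}$ is of finite or infinite order, which is independent of the lattice of translations.
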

We also describe fine mapping properties of $R_X(s)$ in ${\rm Re}(s)\geq n/2$  (i.e.\ on the continuous spectrum), which implies 
a limiting absorption principle in this setting. 

\subsection{An outline of the proof}
The proof relies as usual on a parametrix construction. It is enough to construct local parametrices
with this continuation property near every point of the conformal boundary at infinity (i.e. $\Gamma\backslash\Omega_\Gamma$
where $\Omega_\Gamma\subset S^n=\pl\hh^{n+1}$ is the domain of discontinuity of $\Gamma$); this is now standard at all but the `cusp points',
and hence one of the main steps is to prove this continuation when $\Gamma = \Gamma_p$ is a parabolic group of nonmaximal rank $k<n$, which
fixes a single point $p$ on $S^n=\pl\hh^{n+1}$. Such a group contains a maximal abelian subgroup of finite index, so we can reduce to the 
case where $\Gamma_p$ is abelian. Taking $p = \infty$ in the upper half-space model, we decompose the model cusp $X_c:=\Gamma_\infty
\backslash \hh^{n+1}$ as $X_c=(0,\infty)_x\x F$, where $F=\Gamma_\infty\backslash \rr^n$ is a flat bundle and $\Gamma_\infty$ acts as
a discrete group of Euclidean isometries on each horosphere $\{x={\rm const}\}\simeq \rr^n$. In this decomposition, the Laplacian has the form
\[\Delta_{X_c}=-(x\pl_x)^2+x^2\Delta_F+\frac{n^2}{4} \quad {\rm on }\,\, \, L^2\Big(\rr^+_x, \frac{dx}{x}; L^2(F)\Big)\]
where $\Delta_F$ is the flat Laplacian on the flat bundle $F$.  Next, by spectral decomposition we can reduce $\Delta_F$ to a parameter, and
hence regard $\Delta_{X_c}$ as a $2^{\mathrm{nd}}$ order Bessel-type ordinary differential operator on the half-line $(0,\infty)_x$, depending on 
this parameter. It is standard to write down an explicit formula for its resolvent kernel $R_{X_c}(s;x\sqrt{\Delta_F},x'\sqrt{\Delta_F})$ (using
homogeneous solutions of this ODE, which are modified Bessel functions). We define a functional calculus for $\Delta_F$ using a rather 
explicit spectral decomposition of the Laplacian $\Delta_F$ on the flat bundle; more precisely, we show that it decomposes as a 
countable direct sum $\Delta_F=\bigoplus_{I\in\mc{I}}\Delta_I$ of operators  
\[\Delta_I=-\pl_r^2-\frac{n-k-1}{r}\pl_r+\frac{m(m+n-k-2)}{r^2}+b_I^2 \quad {\rm on }\,\,\,L^2(\rr^+_r, r^{n-k-1}dr)\]
for some $b_I\geq 0$, where $I=(m,p,v)$ with $m\in\nn_0$, $p\in \nn$ and $v$ lies in the rank $k$ lattice $\Lambda^*$ dual 
to the translation part of $\Gamma_\infty$. Such an operator can be recognized as the flat Laplacian on $\rr^{n-k}$ acting 
on $m$-th sperical harmonics, but shifted with the constant $b_I^2$, its spectrum is $[b_I^2,\infty)$ and is absolutely continuous.
The number $b_I$ are quite explicit and behave quite differently if there is an element $\gamma\in\Gamma_\infty$ which is irrational 
in the sense that  no power of $\gamma$ is a pure translation on the horospheres, or equivalently, the flat bundle $F$ has holonomy 
representation in $\mathrm{O}(n-k)$ with infinite image. In the irrational case, the set $\{b_I\}_I$ accumulates at $0$. The resolvent 
kernel  $R_{X_c}(s,x\sqrt{\Delta_F},x'\sqrt{\Delta_F})$ involves complex powers $\Delta_F^{s}$ as the `parameter' $\Delta_F$ approaches $0$.  
The fact that the spectra of the $\Delta_I$ are not bounded away from zero creates fundamentally new complications, which are the root of the 
technical difficulties here, and is the reason that the spectral analysis in this general setting has not been treated analytically before. 
The difficulty is at low frequencies in $\Delta_F$, and a key observation is that the spectral measure $dE_I(t)$ of $\Delta_I-b_I^2$ vanishes 
approximately like $t^{2m}$ as $t \searrow 0$. Finally, we use a parametrix construction as in Guillop\'e-Zworski \cite{GZ2} to
use this resolvent on the model cusp $X_c$ to study the resolvent on an arbitrarily geometrically finite hyperbolic manifold. 
  
Our method is quite robust: combining the construction here with the parametrices constructed in \cite{MM}, it is possible to prove
the same result for asymptotically hyperbolic manifolds (with non-constant curvature) with certain neighbourhoods of infinity  
isometric to one of these constant curvature cusps $X_c$ (see Remark \ref{moregeneral}), and with asymptotic geometry at 
all other points at infinity `conformally compact'; by more classical methods still, we may also allow maximal rank cusps. 
In the interests of keeping the presentation simple and short, we omit any further discussion of these generalizations. The spectral 
decomposition for noncompact flat manifolds developed here does not seem to appear in the literature; our treatment of the
general case was inspired by Gilles Carron's explanation of the three-dimensional case. 

\subsection{Applications}
One corollary concerns the Poincar\'e series and lattice point counting function for the group $\Gamma$.
Recall that, given $\Gamma$, there exists a number $\delta = \delta(\Gamma) \in (0,n)$, called the Poincar\'e exponent of
$\Gamma$, such that for any $m,m'\in \hh^{n+1}$, the Poincar\'e series 
\[P_s(m,m'):=\sum_{\gamma\in\Gamma}e^{-sd(m,\gamma m')}\]
(where $d(\cdot, \cdot) = d_{\hh^{n+1}}(\cdot,\cdot)$ is distance in $\hh^{n+1}$) converges for ${\rm Re}(s)>\delta$; by a famous 
result of Patterson \cite{PatActa} and Sullivan \cite{SU}, this $\delta$ is precisely the Hausdorff dimension of the limit set 
$\Lambda_\Gamma$ of  $\Gamma$. 
\begin{cor}\label{cor1}
Let $X=\Gamma\backslash \hh^{n+1}$ be a geometrically finite hyperbolic manifold, $m,m'\in \hh^{n+1}$ and  $P_s(m,m')$ the 
corresponding Poincar\'e series. Then $P_s(m,m')$  extends meromorphically from $\{{\rm Re}(s)>\delta\}$ to $s\in\cc$ and 
there are constants $c,c'>0$ depending only on $\Gamma$ such that as $R \to \infty$, 
\begin{equation}\label{counting}
\sharp \{\gamma\in \Gamma; d(m,\gamma m')\leq R\}\sim c\, e^{\delta R}F(m)F(m').
\end{equation} 
Here $0 < F \in \calC^\infty(\hh^{n+1})$ is the $\Gamma$-automorphic function given in terms of the Poisson kernel $P(m,\zeta)$ 
and the Patterson-Sullivan probability measure $\mu_\delta$ supported on $\Lambda_\Gamma$ by $F(m)=c'\int_{S^n}P(m,\zeta)^\delta 
d\mu_{\delta}(\zeta)$, and $F$ satisfies ${\rm Res}_{s=\delta}R_X(s)=F\otimes F$;
\end{cor}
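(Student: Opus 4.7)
The plan is to express the Poincar\'e series in terms of the resolvent $R_X(s)$ provided by Theorem \ref{th1} and then extract the counting asymptotic via a Tauberian argument. The starting point is the $\Gamma$-automorphization of the Green's function on the universal cover: for ${\rm Re}(s)>\max(n/2,\delta)$ and $m\neq m'$,
\[
R_X(s;m,m')=\sum_{\gamma\in\Gamma}R_{\hh^{n+1}}(s;\tilde m,\gamma\tilde m'),
\]
where $\tilde m,\tilde m'\in\hh^{n+1}$ are fixed lifts. The hyperbolic Green's function depends only on $d=d_{\hh^{n+1}}(\tilde m,\gamma\tilde m')$ and admits an absolutely convergent expansion at infinity,
\[
R_{\hh^{n+1}}(s;d)=\sum_{k\geq 0}a_k(s)\,e^{-(s+k)d},
\]
where each $a_k(s)$ is meromorphic on $\cc$ and $a_0(s)$ is a nonvanishing explicit ratio of Gamma functions. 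Summing termwise over $\Gamma$ yields
\[
R_X(s;m,m')=\sum_{k\geq 0}a_k(s)\,P_{s+k}(m,m'),\qquad {\rm Re}(s)>\delta.
\]

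Each $P_{s+k}$ with $k\geq 1$ converges absolutely in the half-plane $\{{\rm Re}(s)>\delta-k\}$, which strictly contains that of $P_s$. Solving the above identity for the $k=0$ term,
\[
P_s(m,m')=\frac{1}{a_0(s)}\Big(R_X(s;m,m')-\sum_{k\geq 1}a_k(s)\,P_{s+k}(m,m')\Big),
\]
and proceeding inductively on $k$, the meromorphic extension of $R_X(s)$ furnished by Theorem \ref{th1} propagates to $P_s(m,m')$, which thereby extends meromorphically to $s\in\cc$ with poles of finite rank.

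For the asymptotic \eqref{counting}, let $\mu_{m,m'}$ denote the atomic measure on $[0,\infty)$ placing unit mass at each $d(m,\gamma m')$, $\gamma\in\Gamma$, so that $P_s(m,m')=\int_0^\infty e^{-sR}\,d\mu_{m,m'}(R)$ is its Laplace transform. By Patterson--Sullivan theory the abscissa of convergence equals $\delta$, and by the automorphization identity combined with the residue structure of $R_X(s)$ at $s=\delta$, the function $P_s$ has a simple pole there with residue proportional to $F(m)F(m')$, where $F$ is the positive eigenfunction represented as the Poisson integral of $\mu_\delta$. A Wiener--Ikehara type Tauberian theorem applied to the positive measure $\mu_{m,m'}$ then delivers the stated asymptotic, and normalization identifies the constant $c$.

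The main obstacle is to justify the Tauberian step: one must verify that $P_s$ is holomorphic on the critical line $\{{\rm Re}(s)=\delta\}$ apart from the simple pole at $s=\delta$, and that its residue is a strictly positive rank-one kernel $F\otimes F$. Both points are handled by a Perron--Frobenius argument: the Patterson--Sullivan conformal density is unique up to scaling, which forces the relevant eigenspace at the rightmost pole to be one-dimensional and spanned by the positive function $F$. The identification ${\rm Res}_{s=\delta}R_X(s)=F\otimes F$ then follows by matching residues in the automorphization identity with the explicit Poisson integral representation of $F$.
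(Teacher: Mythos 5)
Your proof splits into two parts, and they fare quite differently against the paper.

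For the meromorphic continuation of $P_s(m,m')$, your argument is essentially the paper's. The paper writes $\widetilde R_s(m,m') := R_X(s;m,m') - R_{\hh^{n+1}}(s;m,m') = \sum_{\gamma\neq\mathrm{Id}} R_{\hh^{n+1}}(s;m,\gamma m')$, expands the free Green's function in powers of $Q = e^{-d}$, solves the resulting relation for $P_s$, and inducts. The one place where you cut a corner is in using the full infinite expansion $R_{\hh^{n+1}}(s;d) = \sum_{k\geq 0} a_k(s) e^{-(s+k)d}$ and exchanging the sum over $k$ with the sum over $\gamma$ termwise. This requires an absolute-convergence justification for the double sum (the coefficients $a_k(s)$ grow, and the distances $d(m,\gamma m')$ start from a positive lower bound, so it can be done, but it is not automatic). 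The paper avoids this by truncating at level $N$ and carrying along the explicit remainder $Q^{s+N+1} L_s(Q)$, for which termwise summation over $\Gamma$ is manifestly fine since $L_s$ is bounded on $[0,1)$; the extension is then bootstrapped one unit strip at a time. You should recast your argument in that finite form, or justify the double-sum exchange.

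For the lattice-point asymptotic, there is a genuine gap. The paper does not prove this step; it imports Patterson's theorem verbatim (Theorem \ref{Patterson}, from \cite{PatArx}), whose hypothesis is exactly that $R_X(s)$ extend meromorphically past $\{{\rm Re}(s)\geq\delta\}$, and whose conclusion is the simple pole at $s=\delta$, the absence of poles elsewhere on $\{{\rm Re}(s)=\delta\}$, the rank-one positive structure $\mathrm{Res}_{s=\delta}\Gamma(s-\tfrac n2+1)R_X(s) = F\otimes F$, and the counting asymptotic. Your proposal instead tries to derive these facts directly and then apply a Wiener--Ikehara type theorem. The Tauberian step itself is fine once you have the hypotheses, but the hypotheses are precisely where the work lies: you assert that a \emph{Perron--Frobenius argument} together with uniqueness of the Patterson--Sullivan density forces the rightmost pole to be simple, rank one, with positive residue, and forces holomorphy elsewhere on the critical line. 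This is not a proof; it is a restatement of Patterson's conclusion. Uniqueness of the conformal density of dimension $\delta$ does not by itself preclude poles of $R_X(s)$ at other points of $\{{\rm Re}(s)=\delta\}$ — ruling those out is the ergodic-theoretic core of Patterson's argument (the paper explicitly flags this as the \emph{fundamental argument from ergodic theory} in \cite{PatArx}). Similarly, identifying the residue as the Poisson integral of $\mu_\delta$ requires matching the boundary behaviour of a resonant state at $s=\delta$ with the conformal density, which again is Patterson's analysis, not a formal consequence of the automorphization identity. So either you should cite \cite{PatArx} for this step, as the paper does, or you need to supply the missing ergodic argument in full.

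A smaller point: the statement $\mathrm{Res}_{s=\delta}R_X(s) = F\otimes F$ needs the caveat $\delta\notin \tfrac n2 - \nn_0$ (otherwise one must work with $\Gamma(s-\tfrac n2+1)R_X(s)$, as in Theorem \ref{Patterson}); the paper remarks on this after stating Patterson's theorem, and your argument should at least acknowledge it.
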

This result was known when $\delta>n/2$ by the work of Lax-Phillips \cite{LP} (with exponential error terms), and in the convex cocompact case
by Patterson \cite{PatArx} without any condition on $\delta$. Using Patterson-Sullivan theory, Roblin \cite{Rob} obtained
the asymptotics \eqref{counting}  of lattice points under weaker assumptions than ours, but he does not prove the continuation of the 
Poincar\'e series and his techniques are significantly different. In order to obtain error terms in \eqref{counting} when $\delta<n/2$, it
is necessary to prove that there is a strip in $\cc$ which is free of resonances, but currently this is only known in the convex cocompact 
case \cite{Naud,Sto}. 

In the final section, we define the Eisenstein series and scattering operator $S_X(s)$, which describe the asymptotic behaviour of 
generalized eigenfunctions of $\Delta_X$ near infinity, and prove their meromorphic continuation to $s\in \cc$. We also establish
several typical functional equations for these operators and prove that $S_X(s)$ is a pseudodifferential operator
acting on the manifold $B:=\Gamma\backslash \Omega_\Gamma$, where $\Omega_\Gamma := S^n \backslash \Lambda_\Gamma \subset S^n$ 
is the domain of discontinuity of $\Gamma$. Thus $B$ is the `boundary at infinity' of $X$, 
and is the natural `locus' of scattering. For convex cocompact quotients, $B$ is compact (and the fact that $S_X$ is pseudodifferential
is well known), but in the geometrically finite case, $B$ is noncompact with finitely many ends, each of which corresponds 
to a cusp of $X$ and is identified with (the end of) a flat vector bundle over a compact flat manifold. We show 
that $S_X(s)$ is equal to the sum of the complex power $\Delta_B^s$ of the Laplacian $\Delta_B$ and a residual term. 

We do not estimate the growth of the counting function of resonances here, but it is certainly possible to do this using our
construction; this will be carried out elsewhere.  It is likely that some Diophantine condition on the irrational elements of $\Gamma$
may be needed to show that this counting function has polynomial growth. All this should be a fundamental step towards proving a Selberg trace 
formula  and the extension of Selberg zeta function for general geometrically finite groups, and would also have applications to 
the study of the length distribution of geodesics. 

\subsection{Organization of the paper} We first review the geometry of geometrically finite hyperbolic quotients, following 
Bowditch \cite{Bow}, and then recall the parametrix construction of Guillop\'e-Zworski \cite{GZ2} for the resolvent when there are 
no cusps. In \S 4 we develop the spectral decomposition of the flat bundle $\Gamma_\infty \setminus \rr^n$, and then
use this in \S 5 to prove the meromorphic continuation of the resolvent on the model non-maximal rank cusp $X_c = \Gamma_\infty \backslash \hh^{n+1}$. \S 6 contains our main result, while in \S 7 we give the finer descriptions of the resolvent kernel close to the 
critical line ${\rm Re}(s)=n/2$ by using summation over the group and then prove the meromorphic continuation of 
Poincar\'e series and its application to lattice points counting in Corollary \ref{cor1}. In the final section, we consider the scattering operator 
and study the Eisenstein series (or Poisson operator).

\section{Geometry of geometrically finite hyperbolic manifolds}
We view hyperbolic $(n+1)$-space $\hh^{n+1}$ either as the half-space $\rr^+_x\x \rr^n_y$ or as the open unit ball 
$B^{n+1}\subset \rr^{n+1}$; its natural smooth compactification $\bbar{\hh}^{n+1}=\bbar{B}^{n+1}$ is obtained by gluing on the 
unit sphere $S^n\subset \rr^{n+1}$. Let $\Gamma$ be a discrete torsion-free group of isometries of $\hh^{n+1}$, 
$X:=\Gamma\backslash\hh^{n+1}$ is a smooth manifold. Note that the action of $\Gamma$ extends to $\bbar{\hh}^{n+1}$, but
any element always has fixed points on the boundary at infinity $S^n$.  If $m \in \hh^{n+1}$, then the set of accumulation points 
of the orbit $\Gamma \cdot m$ in $\bbar{\hh}^{n+1}$ is a closed subset $\Lambda_\Gamma \subset S^n$ called the \emph{limit set} of $\Gamma$.
Its complement $\Omega_\Gamma:=S^n\setminus \Lambda_\Gamma$ is called the \emph{domain of discontinuity}, and $\Gamma$ acts 
properly discontinuously in $\Omega_\Gamma$, with quotient $B = \Gamma \backslash \Omega_\Gamma$. 

We now specialize to the setting where $\Gamma$ is geometrically finite; good references for this include 
Bowditch \cite{Bow} and the monograph by Ratcliffe \cite[Chap. 12]{Rat}. 

Since $\Gamma\setminus\{\rm Id\}$ has no fixed points in $\hh^{n+1}$, any $\gamma \in \Gamma$ has either one or two fixed points on 
$S^n$; in the former case it is called \emph{parabolic}, and in the latter, \emph{hyperbolic} (or sometimes also
\emph{loxodromic}). In either case, the fixed point set of any $\gamma \in \Gamma$ lies in $\Lambda_\Gamma$.
If $p$ is the fixed point of a parabolic element, then the subgroup $\Gamma_p\subset \Gamma$ stabilizing $p$ contains 
only parabolic elements, and is called an \emph{elementary parabolic group}.  Conjugating by a suitable isometry, 
we may assume that $p = \infty$ in the upper half-space model. Then, viewing $\hh^{n+1}$ as the half plane $\rr^+\x \rr^n$
with $x>0$ the vertical variable, $\Gamma_\infty$ acts isometrically on each horosphere $E_a:=\{x=a\}\simeq \rr^n$ with
the induced Euclidean metric. It was proved by Bieberbach, see \cite[Sec. 2.2]{Bow} and \cite[Sec. 2]{MP}, 
that there is a maximal normal abelian subgroup $\Gamma'_\infty\subset \Gamma_\infty$ of finite index and an affine suspace 
$Z\subset E_1$ of dimension $k$, invariant under $\Gamma_\infty$, such that $\Gamma'_\infty$ acts as a group of translations of
rank $k$ on $Z$, so that the quotient $T':=\Gamma'_\infty \backslash Z$ is a $k$-dimensional flat torus. If $E_1=Z\x Y$ is
an orthogonal decomposition, with $Y\simeq \rr^{n-k}$ and associated coordinates $(z,y)$, then each $\gamma\in 
\Gamma_\infty$ acts by 
\[ 
\gamma (x,y,z)= (x, A_\gamma y, R_\gamma z+b_{\gamma}), \quad b_\gamma\in \rr^k, \quad R_\gamma\in \mathrm{O}(n-k),
A_\gamma \in \mathrm{O}(k),
\]
where for each $\gamma$, $R_\gamma^m={\rm Id}$ for some $m \in\nn$, with $m=1$ if $\gamma\in \Gamma'_\infty$. If there 
exists $m\in\nn$ such that $\gamma^m(x,y,z)=(x,y,z+c_\gamma)$ for some $c_\gamma\in\rr^k$, then $\gamma$ is called \emph{rational}, 
which is equivalent to saying that $R_\gamma^m={\rm Id}$ and $A_\gamma^m={\rm Id}$ for some $m\in\nn$. Otherwise 
$\gamma$ is called \emph{irrational}. The quotients $\Gamma_\infty'\backslash \hh^{n+1}$ and $\Gamma_\infty\backslash\hh^{n+1}$ 
are both of the form $\rr^+_x\x F$ and $\rr^+_x\x F'$ for some flat bundles $F\to T$ and $F'\to T'$, where the bases $T$ and $T'$
are compact flat manifolds; here $F=\Gamma_\infty\backslash E_1$, $F'=\Gamma'_\infty\backslash E_1$ and $T=\Gamma_\infty\backslash Z$.
The hyperbolic metric on $\hh^{n+1}$ descends to a hyperbolic metric $g_X=(dx^2+g_F)/x^2$ where $g_F$ is a flat metric
on the bundle $F$ induced from the restriction of the hyperbolic metric to the horosphere $\{x=1\}$. 

Using the splitting $\hh^{n+1}=\rr^+\x Z\x Y$, define
\[
C_\infty(R):= \{(x,z,y)\in [0,\infty)\x Z\x Y ; x^2+|y|^2\geq R\} \subset \bbar{\hh}^{n+1};
\]
this is invariant under $\Gamma_\infty$ and is hyperbolically convex. It is called a \emph{standard parabolic region} for $\Gamma_\infty$. Similarly,
define $ C_p(R)$ for any other parabolic fixed point $p$. 

For any parabolic fixed point $p$, there exists an $R > 0$ so that the parabolic region $C_p(R)$ satisfies
\[
C_p(R)\subset\hh^{n+1}\cup \Omega_\Gamma, \quad \mbox{and}\quad \gamma C_p(R)\cap C_p(R)=\emptyset
\ \mbox{for all}\ \gamma\in\Gamma\setminus \Gamma_p.
\]
These conditions imply that $C_p(R)$ descends to a set $\mc{C}_p:=\Gamma\backslash (\cup_{\gamma\in\Gamma}\gamma(C_p(R)))$.
This is contained in $\Gamma \backslash(\hh^{n+1}\cup\Omega_\Gamma)$ and has interior isometric to the interior of $\Gamma_\infty
\backslash C_P(R)$. The set $\mc{C}_p$ is called a \emph{standard cusp region} associated to (the orbit of) $p$. The \emph{rank of the cusp}
is the rank of $\Gamma'_\infty$. 

A \emph{geometrically finite hyperbolic quotient} is a quotient $\Gamma\backslash \hh^{n+1}$ by a discrete group $\Gamma$
such that $\Gamma\backslash (\hh^{n+1}\cup\Omega_\Gamma)$ has a decomposition into the union of a compact set $K$ and a finite number
of standard cusp regions. This is more general than requiring that there exist a convex finite-sided fundamental domain of $\Gamma$,
although these conditions are equivalent when $n\leq 3$ or if all $\gamma\in\Gamma$ are rational (see Prop.5.6 and 5.7 in \cite{Bow}). 
When $\Gamma$ has no elliptic or parabolic elements, then both $\Gamma$ and the quotient $\Gamma\backslash \hh^{n+1}$
are called  \emph{convex co-compact}, and the quotient manifold has no cusp then.

In general, $X=\Gamma\backslash \hh^{n+1}$ with hyperbolic metric $g$ is a complete noncompact hyperbolic manifold with $n_c$ cusps, where $n_c$ is the number of $\Gamma$-orbits of fixed points of the parabolic elements of $\Gamma$. 
The conjugacy class of the parabolic subgroup fixing the $j^{\mathrm{th}}$ cusp point is denoted $\Gamma_j$. By geometric finiteness,
$X$ has finitely many ends and there exist a covering of the ends of $X$ of the form
$\{\calU_j^r\}_{j\in J^r} \cup \{\calU^c_j\}_{j\in J^c}$, $|J^c|=n_c$, so that $X$ minus the union of all these sets is compact, 
each $\calU^r_j$ is isometric to a half-ball in $\hh^{n+1}$, and each $\calU^c_j$ is isometric to a cusp region
$\Gamma_j \backslash C_\infty(R_j)$. 
We also assume that the $\calU_j^c$ are disjoint; they are called \emph{cusp neighbourhoods}; the $\calU^r_j$ are 
called \emph{regular neighbourhoods}.  

We also consider the smooth manifold with boundary $\bbar{X}:=\Gamma\backslash(\hh^{n+1}\cup\Omega_\Gamma)$,
which is noncompact, with finitely many ends, if $n_c > 0$. There is a compactification of $\bbar{X}$ as a smooth compact manifold 
with corners, see \cite{MP}, but we do not need this here. The boundary $\pl\bbar{X}=\Gamma\backslash\Omega_\Gamma$ is also a
noncompact manifold with $n_c$ ends; if we denote by $x$ a boundary defining function of $\pl\bbar{X}$ in $\bbar{X}$ 
which extends the function $x$ defined in each $\calU^c_j$ (as transferred to $X$ via the appropriate isometry), 
then $g_{\pl\bbar{X}}:=(x^2g_X)|_{T\pl\bbar{X}}$ defines a complete metric on $\pl\bbar{X}$ which is flat outside a compact set.
Indeed, writing the hyperbolic metric $g$ in $\calU_j^c$ as $x^{-2}(dx^2+g_{F_j})$, then near the cusp point 
$p_j$, $g_{\pl\bbar{X}}$ is the metric naturally induced from $g_{F_j}$ at $x=0$. There is an associated volume form $dv_{\pl\bbar{X}}$.
(Note, however, that this metric is only well-defined up to a positive smooth multiple away from the ends, and up to a constant
multiple in the ends; this does not make any difference for us.) 

\medskip

We conclude this section with the description of several different function spaces which appear frequently below.
 
The first is the standard space $\calC_0^\infty(X)$ of smooth functions compactly supported in the interior $X$
of $\bbar{X}$. We also consider $\calC_0^\infty(\bbar{X})$, which by definition consists of smooth functions on $\bbar{X}$ with 
compact support in the (non-compact) manifold $\bbar{X}$, so in particular the intersection of the support of such a $\phi$ with 
any $\calU^c_j$ lies in $\{x^2+|y|^2\leq R\}$ for some $R> R_j > 0$.  The functions in $\calC_0^\infty(\bbar{X})$ which vanish to infinite order 
at the boundary $\pl\bbar{X}$ constitute the space $\dot{\calC}_0^\infty(\bbar{X})$. 

We next define the $L^2$-based Sobolev spaces on $\pl\bbar{X}$ with respect to the metric $g_{\pl\bbar{X}}$:
\[ 
H^M(\pl\bbar{X}):=\{ f\in L^2(\pl\bbar{X}, dv_{\pl\bbar{X}}); \nabla_{\pl\bbar{X}}^{\ell}f\in L^2(\pl\bbar{X},dv_{\pl\bbar{X}}), \ \forall \ell\leq M\},
\]
and their intersection $H^\infty(\pl\bbar{X})=\cap_{M\geq 0}H^M(\pl\bbar{X})$. We also define
\[
H^{\infty}(X):=\{ f\in \calC^\infty(\bbar{X}); f|_{\calU^c_j}\in \calC_b^\infty([0,\infty)_x, H^{\infty}(F_j)) \textrm{ for all }j\in J^c \},
\] 
where we regard each $\calU^c_j$ as lying in $[0,\infty)_x \x F_j\simeq \Gamma_j\backslash\hh^{n+1}$,
and where for any Frechet space $E$, $\calC^\infty_b([0,\infty),E)$ denotes the set of smooth $E$-valued functions $f$ 
with all derivatives $\pl_x^\ell f$ bounded uniformly in $x$ with respect to each semi-norm of $E$. In particular, restrictions of 
functions in this space to $\pl\bbar{X}$ belong to $H^\infty(\pl\bbar{X})$. Finally we define 
\[ 
\dot{H}^\infty(X):=\{ f\in H^\infty(X); f=\calO(x^\infty) \textrm{ as  }x\to 0 \textrm{ and }f=\calO(x^{-\infty}) \textrm{ as }x\to \infty\}.
\]

\section{The resolvent when $\Gamma\backslash\hh^{n+1}$ has no cusps}
Since our main construction below uses the arguments of Guillop\'e-Zworski \cite{GZ2} (which they developed for the case 
with no cusps), we recall their method now. The construction is based on two things: the resolvent on $\hh^{n+1}$ and the indicial equation
for the Laplacian near the boundary at infinity. 
\subsection{Model resolvent}
The  kernel of the resolvent $R_{\hh^{n+1}}(s)=(\Delta_{\hh^{n+1}}-s(n-s))^{-1}$ of 
$\Delta_{\hh^{n+1}}$ is a function of the distance $d(\cdot,\cdot)$ on $\hh^{n+1}$ given by \cite[Sec 2]{GZ2}
\[\begin{split}
R_{\hh^{n+1}}(s;m,m')=&\frac{\pi^{-\ndemi}2^{-2s-1}\Gamma(s)}{\Gamma(s-\ndemi+1)}\cosh^{-2s}\Big(\frac{d(m,m')}{2}\Big) \\
&\x F\Big(s,s-\ndemi+\demi,2s-n+1;\cosh^{-2}\Big(\frac{d(m,m')}{2}\Big)\Big)
\end{split}\] 
where $F(a,b,c;u)=1+
\tfrac{a . b}{1. c}u+\tfrac{a(a+1).b(b+1)}{1.2.c(c+1)}u^2+\dots$ is the hypergeometric function. This 
is holomorphic in $s\in \cc$ when $n$ is even and meromorphic with poles at $s\in -\nn_0$ if $n$ is odd, 
and the residues are finite rank operators \cite[Lemma 2.2]{GZ2}.
There is an equivalent formula in terms of $\tau(m,m'):=1/\cosh(d(m,m'))$, see \cite[Lemma 2.1]{GZ2}:
\begin{equation}\label{formulemodel}
 R_{\hh^{n+1}}(s;m,m')= \frac{\tau^{s}(m,m')}{\pi^{\ndemi}2^{s+1}}\sum^\infty_{j=0}
\frac{\Gamma(s+2j)}{\Gamma(s-\ndemi+1+j)\Gamma(j+1)}(2\tau(m,m'))^{2j} ; 
\end{equation}
in the half-space model $\rr^+_x\x \rr^n_y$, $\tau(x,y,x',y')=\frac{2xx'}{|y-y'|^2+x^2+(x')^2}$.
For all $\eps>0$, this series converges uniformly  in $\tau\leq 1-\eps$. Define 
\begin{equation}\label{halfball}
B:= \{(x,y)\in (0,1)\x \rr^n; x^2+|y|^2<1\}, \quad \bbar{B}:= \{(x,y)\in [0,1)\x \rr^n; x^2+|y|^2<1\},
\end{equation}
viewed as subsets of $\hh^{n+1}$, and its compactification $\bbar{\hh}^{n+1}$. 
Let $\chi\in \calC^\infty(\bbar{B})$, then by \eqref{formulemodel}
\begin{equation}\label{outsidediag}
\chi(m) R_{\hh^{n+1}}(s;m,m')\chi(m')\in (xx')^s\calC^\infty(\bbar{B}\x\bbar{B}\setminus {\rm diag})
\end{equation}
where ${\rm diag}$ denotes the diagonal. 

\begin{lem}\label{composition}
Let $\chi\in \calC^\infty(\bbar{B})$ with compact support in $\bbar{B}$, let $s\in\cc$ and let
$K_s$ be an operator in $B$ with Schwartz kernel in $x^{N}{x'}^s\calC^\infty(\bbar{B}\x\bbar{B})$ 
for all $N\in\nn$, and with compact support in $\bbar{B}\x\bbar{B}$. 
Then $\chi R_{\hh^{n+1}}(s)\chi K_s$ is an operator in $B$ with Schwartz kernel in
$(xx')^s\calC^\infty(\bbar{B}\x\bbar{B})$.
\end{lem}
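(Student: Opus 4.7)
The strategy is to extract the boundary weight factors $(xx'')^s$ from the model resolvent and $(x')^s$ from $K_s$, and then show that what remains is a composition which preserves $\calC^\infty$ smoothness up to $\bbar{B}\x\bbar{B}$.

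\emph{Factorization of the kernels.} Setting $\tau(m,m'')=2xx''/D$ with $D=|y-y''|^2+x^2+(x'')^2$, formula \eqref{formulemodel} yields
\[
R_{\hh^{n+1}}(s;m,m'')=(xx'')^s\,Q(s;m,m''),\qquad Q(s;m,m'')=\frac{1}{2\pi^{\ndemi}}\,D^{-s}\,h(\tau^2),
\]
where $h(u)=\sum_{j\geq 0}\frac{\Gamma(s+2j)}{\Gamma(s-\ndemi+1+j)\Gamma(j+1)}(4u)^j$ is analytic on $|u|<1$. By \eqref{outsidediag}, $\chi(m)Q(s;m,m'')\chi(m'')$ is smooth on $\bbar{B}\x\bbar{B}$ away from the diagonal; at the diagonal $Q$ carries the conormal singularity of an elliptic pseudodifferential operator of order $-2$ (from the divergence of $h(\tau^2)$ as $\tau\to 1$). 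Simultaneously, the hypothesis on $K_s$ lets me write $K_s(m'',m')=(x')^s G(m'',m')$ with $G\in\calC^\infty(\bbar{B}\x\bbar{B})$, compactly supported, and vanishing to infinite order at $\{x''=0\}$.

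\emph{Reduction.} Combining these with the hyperbolic volume form $dv_g=(x'')^{-n-1}dx''\,dy''$, the Schwartz kernel of the composition equals
\[
(\chi R_{\hh^{n+1}}(s)\chi K_s)(m,m')=(xx')^s\,\chi(m)\int Q(s;m,m'')\,\Psi(m'',m')\,dx''\,dy'',
\]
where $\Psi(m'',m'):=(x'')^{s-n-1}\chi(m'')G(m'',m')$ is smooth on $\bbar{B}\x\bbar{B}$, compactly supported in $m''$, and vanishes to infinite order at $\{x''=0\}$ (the potentially singular factor $(x'')^{s-n-1}$ is absorbed by the infinite-order vanishing of $G$). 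It therefore suffices to prove that this integral defines an element of $\calC^\infty(\bbar{B}\x\bbar{B})$.

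\emph{Smoothness and the main obstacle.} Split $Q=Q_{\mathrm{far}}+Q_{\mathrm{near}}$, with $Q_{\mathrm{near}}$ compactly supported in a small diagonal neighborhood. The $Q_{\mathrm{far}}$ contribution is immediate: $\chi Q_{\mathrm{far}}\chi\in\calC^\infty(\bbar{B}\x\bbar{B})$ by \eqref{outsidediag}, and integration against the smooth, compactly supported $\Psi$ gives a smooth function up to the boundary. The real difficulty lies in $Q_{\mathrm{near}}$: as $m\to\{x=0\}$, the diagonal support forces $m''$ near $m$ and hence $x''\to 0$, so one must reconcile the conormal singularity of $Q_{\mathrm{near}}$ at the diagonal with the corner $\{x=x''=0\}$. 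Because $\Psi$ vanishes to infinite order at $\{x''=0\}$, for every integer $N$ I can write $\Psi(m'',m')=(x'')^N R_N(m'',m')$ with $R_N\in\calC^\infty(\bbar{B}\x\bbar{B})$ compactly supported; substituting and using standard elliptic regularity for the compactly supported order $-2$ pseudodifferential operator $Q_{\mathrm{near}}$, the resulting integral is at least $\calC^{N-2}$ up to $\{x=0\}$, and letting $N\to\infty$ yields the required $\calC^\infty$ smoothness, with joint smoothness in $(m,m')$ inherited from the smooth $m'$-dependence of $G$. A cleaner alternative is to recognize $R_{\hh^{n+1}}(s)$ as a 0-pseudodifferential operator of order $-2$ with leading boundary weights $(xx'')^s$ in the sense of \cite{MM}; then the composition rules of the 0-calculus, applied to a factor whose kernel vanishes to infinite order at one boundary face, directly produce a kernel in $(xx')^s\calC^\infty(\bbar{B}\x\bbar{B})$.
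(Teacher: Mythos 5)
Your concluding paragraph --- recognizing $R_{\hh^{n+1}}(s)$ as a 0-pseudodifferential operator with boundary weight $(xx'')^s$ and invoking the 0-calculus composition rules against a kernel vanishing to infinite order at the right boundary face --- is exactly the paper's proof, which cites the edge/0-calculus composition theorem \cite[Theo. 3.18]{MaCPDE} together with the description of the free resolvent kernel from \cite[Prop. 6.2]{MM}. Your first, more hands-on argument is in the right spirit (the infinite-order vanishing of $K_s$ at $\{x''=0\}$ is indeed what makes the lemma true), but the step ``using standard elliptic regularity for the compactly supported order $-2$ pseudodifferential operator $Q_{\mathrm{near}}$, the resulting integral is at least $\calC^{N-2}$ up to $\{x=0\}$'' is a genuine gap. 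Elliptic regularity and pseudolocality give smoothness of $Q_{\mathrm{near}}\Psi$ in the interior $\{x>0\}$; they say nothing about behaviour up to $\{x=0\}$, where $Q_{\mathrm{near}}$ is not a classical compactly supported pseudodifferential kernel on a closed manifold: its diagonal conormal singularity runs into the corner $\{x=x''=0\}$. Quantifying how $\partial_x^j\partial_y^\alpha Q$ (whose derivatives degenerate as negative powers of $D=|y-y''|^2+x^2+(x'')^2$) is absorbed by the factor $(x'')^N R_N$, uniformly as $x\to 0$, is precisely what the blowup of $\bbar{B}\x\bbar{B}$ along the boundary diagonal is built to handle. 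So either replace the elliptic-regularity invocation with an explicit estimate of $\partial_x^j\partial_y^\alpha Q$ against $(x'')^N R_N$ on the support of $\chi$, or simply use the 0-calculus route you propose at the end, which is both cleaner and is what the paper does.
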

The proof follows by combining the composition result \cite[Theo. 3.18]{MaCPDE} and the description in 
\cite[Prop. 6.2]{MM} of the kernel $R_{\hh^{n+1}}(s;m,m')$ near the diagonal of the boundary $S^n=\pl\bbar{\hh}^{n+1}$.

\subsection{Indicial equation for the Laplacian}\label{secindicial}
The Laplacian in the half-space model $\rr^+_x\x \rr^n_y$ of $\hh^{n+1}$ is given by 
\[\Delta_{\hh^{n+1}}=-(x\pl_x)^2+nx\pl_x+x^2\Delta_y; \]
from this one calculates that if $F\in \calC^\infty([0,1]\x\rr^n)$, $\ell\in\nn_0$, then
\begin{equation} \label{indicialeq}
(\Delta_{\hh^{n+1}}-s(n-s))\frac{x^{s+2\ell}F(x^2,y)}{2\ell(n-2s-2\ell)}=x^{s+2\ell}(F(x^2,y)+
x^2H_{s,\ell}(x^2,y)),
\end{equation} 
where $H_{s,\ell} \in \calC^\infty([0,1]\x \rr^n)$ depends meromorphically on $s$ and has a single (simple) pole at $n/2-\ell$. 
Using \eqref{indicialeq} inductively, we see that for each $F\in \calC^\infty([0,1]\x\rr^n)$ and $N\in\nn_0$, 
there exists $G_{s,N},L_{s,N}\in \calC^\infty([0,1]\x\rr^n)$ 
with $\tfrac{\Gamma(s-n/2+1+N)}{\Gamma(s-n/2+1)}G_{s,N}$ 
and $\tfrac{\Gamma(s-n/2+1+N)}{\Gamma(s-n/2+1)}L_{s,N}$ holomorphic in $s$, and such that
\[(\Delta_{\hh^{n+1}}-s(n-s))x^{s+2}G_{s,N}(x^2,y)=x^{s+2}F(x^2,y)+x^{s+2+2N}L_{s,N}(x^2,y).\]
Using Borel summation, there exist $G_{s},L_{s}\in \calC^\infty([0,1]\x\rr^n)$ with $G_s/\Gamma(s-n/2+1)$ 
and $L_s/\Gamma(s-n/2+1)$ holomorphic in $s$, such that
\[(\Delta_{\hh^{n+1}}-s(n-s))x^{s+2}G_{s}(x^2,y)=x^{s+2}F(x^2,y)+L_{s}(x^2,y)\]
and $L_s(x^2,y)=\calO(x^\infty)$ at $x=0$.

\subsection{The parametrix construction of Guillop\'e-Zworski}\label{Guillopezworski}
Now, let $X=\Gamma\backslash \hh^{n+1}$ be a convex cocompact hyperbolic manifold. The smooth compactification 
$\bbar{X}=\Gamma\backslash(\hh^{n+1}\cup \Omega_\Gamma)$ is a compact manifold with boundary. Choose a smooth 
boundary defining function $\rho$ for $\pl\bbar{X}$ in $\bbar{X}$. There is a finite open cover $\{\calU_j\}$ of $X$ such that 
$X\setminus \cup_j\calU_j$ is compact in $X$ and each $\calU_j$ is identified by an isometry $\iota_j$ to the half-ball 
$B\subset \hh^{n+1}$ with hyperbolic metric $x^{-2}(dx^2+|dy|^2)$. The sets $\calU_j$ cover a neighbourhood of the
boundary at infinity $\pl\bbar{X}$ and $\iota_j^*x/\rho$ in $\calU_j$ is a smooth strictly positive function. 
Using pull-back by $\iota_j$, we can systematically identify operators on $\calU_j$ with their counterparts on $B$. In particular, denote
by $R_j(s)$ the operator obtained in this way from the restriction to $B$ of the meromorphically extended resolvent $R_{\hh^{n+1}}(s;m,m')$.
Fix $\chi_j,\hat{\chi}_j\in \calC^\infty(\bbar{X})$ with $\chi:=\sum_{j=1}^\ell \chi_j$ equal to $1$ near $\pl\bbar{X}$,
$\hat{\chi}_j$ supported in $\calU_j$ and $\hat{\chi}_j=1$ on the support of $\chi_j$. We also assume that 
each $\hat{\chi}_j$ is smooth as a function of $x^2$ in $\bbar{B}$.  

Define the initial parametrix $Q_0(s):=\sum_{j=1}^\ell \hat{\chi}_jR_j(s)\chi_j$. This satisfies
\[
(\Delta_{X}-s(n-s))Q_0(s)=\chi + K_0(s), \quad K_0(s):=\sum_{j=1}^\ell [\Delta_{X},\hat{\chi}_j]R_j(s)\chi_j.
\]
The expression \eqref{formulemodel} can be used to deduce that, as a Schwartz kernel on $\calU_j\x \calU_j$ (identified with $B\x B$),  
\[
[\Delta_X,\hat{\chi}_j]R_j(s)\chi_j (x,y,x',y')=x^{s+2}{x'}^s G(s;x,y,x',y')
\] 
for some function $G(s;m,m')$ which is smooth in $(m,m')\in B^2$ and meromorphic in $s$ with 
poles of finite rank at $-\nn_0$ if $n$ is odd and no poles if $n$ is even. Moreover, $G(s;x,y,x',y')$ is a smooth function of 
$(x^2,y,x',y')\in ([0,1]\x\rr^n)^2$. 

As explained in \S \ref{secindicial}, this `indicial equation' allows us to inductively solve away terms in the Taylor expansion 
in $x$ of the Schwartz kernel of $K_0(s;x,y,x',y')$ at $x=0$, viewing the right variable $(x',y')$ as a parameter.
In other words, for any $N > 0$ we can construct a kernel $Q_{N,j}(s)$ such that $(xx')^{-s}Q_{N,j}(s)$ is a smooth function of 
$(x^2,y,{x'},y')$ down to $x=0$ and $x'=0$, and 
\[
(\Delta_{X}-s(n-s))Q_{N,j}(s)=\chi_j+K_{N,j}(s)
\] 
for some $K_{N,j}(s)\in x^{s+2N}{x'}^s \calC^\infty(\bbar{\calU}_j\x \bbar{\calU}_j)$ (here $\bbar{\calU}_j$ is the closure in the compactification $\bbar{X}$), 
which are therefore Schwartz kernels of compact operators on $\rho^NL^2(X)$ in $\{{\rm Re}(s)>n/2-N\}$. 
The Taylor expansion at $x=0$ of $G(s;x,y,x',y')$ is essentially contained in the series expansion \eqref{formulemodel}, and all coefficients are meromorphic with only simple poles at $-\nn_0$, with corresponding residues some finite rank operators.  Then by \S \ref{secindicial}, the poles 
of $Q_{N,j}(s),K_{N,j}(s)$ are necessarily contained in $(n/2-\nn)\cup -\nn_0$, but a priori not necessarily of finite rank. 
A careful look at the coefficients of the expansion \eqref{formulemodel}, and in particular the $1/\Gamma(s-n/2+1+j)$ factor 
vanishing to first order at $s\in n/2-1-j-\nn$, shows that the poles occur only at $-\nn_0$, are simple, and the residues are finite rank. 
We omit details and refer the interested reader to \cite[Prop 3.1]{GZ2} for these facts. 
Finally, choosing $\eta\in \calC_0^\infty(X)$ so that $\eta(1-\chi)=1-\chi$, fix $s_0\gg n/2$ large 
and set $Q_N(s):=\sum_{j=1}^\ell Q_{N,j}(s)+\eta R_X(s_0)(1-\chi)$, then
\[\begin{gathered}
(\Delta_{X}-s(n-s))Q_{N}(s)=1+K_{N}(s), \quad \mbox{where} \\  
K_N(s):=\sum_{j=1}^\ell K_{N,j}(s)+[\Delta_X,\eta]R_X(s_0)(1-\chi) +(s_0(n-s_0)-s(n-s))\eta R_X(s_0)(1-\chi)
\end{gathered}\]
Then $K_N(s)\in \rho^{s+2N}\rho'^s\calC^\infty(\bbar{X}\x\bbar{X})$ is the kernel of a compact operator on $\rho^NL^2(X)$
if ${\rm Re}(s)>n/2-N$. It is shown in \cite[Prop 3.1]{GZ2} that for each $N\in \nn$, if $s_0$ is large enough and choosing the 
support of $\chi$ sufficiently close to $\pl\bbar{X}$, then we have $||K_N(s_0)||_{\rho^NL^2\to\rho^NL^2}<1/2$. The operator $Q_N(s)$ 
has Schwartz kernel constructed from the model resolvent $R_{\hh^{n+1}}(s)$ and terms 
in $(\rho\rho')^s \calC^\infty(\bbar{X}\x\bbar{X})$, it is straightforward to check\footnote{This is claimed without proof in \cite[Prop 3.1]{GZ2}, but 
is an easy exercise. It also follows from combining \cite[Prop 6.2]{MM} with \cite[Prop 3.20]{MaCPDE}} that this is a bounded operator
mapping $\rho^NL^2(X)$ to its dual $\rho^{-N}L^2(X)$; moreover, by the discussion above, it is meromorphic with poles of finite rank. 
Applying the Fredholm analytic theorem, we deduce that $(1+K_N(s))^{-1}$ exists as a meromorphic family of bounded operator on $\rho^NL^2(X)$ in $\{{\rm Re}(s)>n/2-N\}$, and thus $Q_N(s)(1+K_N(s))^{-1}$ meromorphically extends 
$R_X(s)$ to $\{{\rm Re}(s)>n/2-N\}$. Since this can be done for all $N\in\nn$, the proof is complete.

\section{Spectral decomposition when $\Gamma$ is an elementary parabolic group}
We first tackle the spectral analysis of the Laplacian when $\Gamma$ is a discrete elementary parabolic subgroup.
As before, and using the notation introduced in \S 2, assume that the parabolic fixed point is $\infty$ in the upper
half-space model. We use a type of Fourier decomposition for functions on the flat bundle $F = \Gamma_\infty \backslash \rr^n$.
This proceeds in two stages: we first obtain a discrete Fourier decomposition of functions on the compact spherical normal bundle 
$SF$, and then couple this with a continuous Fourier-Bessel type decomposition for functions in the radial variable
on the fibres of $F$; together these reduce $\Delta_F$ to a family of multiplication operators. 

Recall from \S 2 that we have a maximal abelian normal subgroup $\Gamma'_\infty\subset \Gamma_\infty$ of finite index and an affine 
$k$-dimensional subspace $Z\subset E_1$ on which $\Gamma'_\infty$ acts by translations, 
where $k$ is the rank of the cusp, and an orthogonal complement $Y\simeq \rr^{n-k}$  
in $E_1$. Since $[\Gamma_\infty:\Gamma'_\infty]$ is finite, it suffices to prove the meromorphic continuation of the resolvent 
on $\Gamma'_\infty\backslash\hh^{n+1}$; the resolvent on the  quotient $\Gamma_\infty \backslash \hh^{n+1}$ is then
a finite sum of translates of the resolvent on $\Gamma_\infty' \backslash \hh^{n+1}$. To simplify exposition we 
assume that $\Gamma_\infty =\Gamma'_\infty$ and that $X_\infty$ is orientable.  
Thus $\Gamma_\infty$ is freely generated by $k$ elements $\gamma_1,\dots,\gamma_k$ which 
in terms of the decomposition $\hh^{n+1}=\rr^+_x\x E_1$, $E_1=Y\oplus Z$, act by 
\[
\gamma_\ell(x,y,z) =(x, A_\ell y,z+v_\ell), \quad v_\ell\in\rr^k, A_\ell\in \mathrm{SO}(n-k).
\]
In other words, $\gamma_\ell$ is identified with the pair $(v_\ell,A_\ell)$. Since the $A_\ell$ are orthogonal, the Euclidean metric 
on $Y$ descends naturally to a flat metric $g_F$ on the fibres of $F:=\Gamma_\infty\backslash E_1$; in particular, the unit sphere bundle $SF$ 
is well-defined.

Since $\Gamma_\infty$ is abelian, $\{A_1, \ldots, A_k\}$ is a commuting set of orthogonal matrices, and so there is 
an orthogonal decomposition $Y\simeq \rr^{n-k} \cong V_0 \oplus \ldots \oplus V_s$, with $\dim V_0 = r$ and $\dim V_j 
= 2$, $j \geq 1$, such that $A_\ell$ acts trivially on $V_0$ and is a rotation by angle $\theta_{j\ell}$ on 
$V_j$ for every $j, \ell$. In other words, this decomposition puts each $A_j$ into block form with
each block either the identity or a rotation on each summand $V_j$.  

Altogether, we have now described $F$ as the total space of a vector bundle
$\calV$ over a compact $k$-dimensional torus $T$, where $\calV = \calV_0 \oplus \calV_1 \oplus 
\ldots \oplus \calV_s$; here $\calV_0$ is a trivial bundle of rank $r$ and all the other $\calV_j$ 
have rank $2$.  A function $f$ on $F$ is identified with a function $f(z,y)$ on $Z \oplus Y$
which satisfies 
\[
f(z + v_\ell, y) = f(z, A_\ell^{-1}y) = f(z, y_0, e^{-i\theta_{1\ell}} y_1, \ldots, e^{-i\theta_{s\ell}} y_s), \qquad
\ell= 1, \ldots, k, 
\]
where $(y_0, y_1, \ldots, y_s)$ are the components of $y$ with respect to the splitting $V_0 \oplus V_1 \oplus 
\ldots \oplus V_s$, and each $V_j$ is identified with $\cc$ when $j>0$.  We can describe functions $f$ on $SF$ in exactly the same way.

\subsection{Fourier decomposition on SF}\label{fourierdec}
The flat Laplacian $\Delta_{Y}$ on each fibre $F_z \simeq Y$ defines an operator on the total space of $F$ 
which acts fibrewise; its angular part is the Laplace operator on $S^{n-k-1}$. Let
\begin{equation}\label{L^2sphere}
L^2(S^{n-k-1}) = \bigoplus_{m=0}^\infty  H_m
\end{equation}
be the usual irreducible decomposition for the action of $\mbox{SO}\,(n-k)$, so $H_m$ is the space of spherical 
harmonics of degree $m$, 
\[
H_m = \ker \left(\Delta_{S^{n-k-1}} - m (m+n-k-2) \right);
\]
we also set $\dim H_m = \mu_m$.  

The vector spaces $H_m$ on each fibre of $SF$ fit together to form a flat 
vector bundle $\calH_m \longrightarrow T$. A section $\sigma$ of $\calH_m$ is a function $f(z,y)$ on $SF$,
identified as above with a function on $Z \times S^{n-k-1}$ satisfying $f(z+v_j,\omega) = f(z, A_j^{-1}\omega)$
such that for each $z \in T$, $\omega \mapsto f(z,\omega)\in H_m$.  (This makes sense since the action of 
$\mbox{SO}(n-k)$ preserves each $H_m$.) Now identify $V_j \cong \cc$, $j = 1, \ldots, s$, so that $A_j$ lies in the 
compact abelian subgroup $K = \times_{j=1}^s \mbox{U}(1)$, which acts by the identity on the first $r$ components 
in $\rr^{n-k}$.  The restriction of the irreducible representation of $\mbox{SO}(n-k)$ on $H_m$ to $K$ is a direct sum 
of one-dimensional irreducible representations:
\begin{equation}
H_m = L_1^{(m)} \oplus \cdots \oplus L_{\mu_m}^{(m)},
\label{eq:decompHm}
\end{equation} 
where $\theta = (\theta_1, \ldots, \theta_s) \leftrightarrow (e^{i\theta_1}, \ldots , e^{i\theta_{s}}) \in K$ acts 
on $L_p^{(m)}$ by $\exp (i \alpha_{mp}(\theta))$, $p = 1, \ldots, \mu_m$. It can be shown that
\[
\alpha_{mp}(\theta) = \sum_{\ell = 1}^s c_{mp\ell} \theta_\ell
\]
where the structure constants $c_{mp\ell}$ are all integers determined by the specific representation $L_p^{(m)}$; the 
precise formul\ae\ for them are complicated to state and in any case not important here.  

Because the $A_j$ lie in $K$, each $L^{(m)}_p$ determines a flat complex line bundle $\calL^{(m)}_{p}$ over $T$. 
By construction, a section $f$ of $\calL^{(m)}_p$ corresponds to a function $f(z,\omega)$ such that
\[
f(z + v_j, \omega) = e^{i \alpha_{mpj}} f(z, \omega), 
\]
where $\alpha_{mpj}$ are defined as
\begin{equation}
\alpha_{mpj} = \alpha_{mp} (\theta_{1 j}, \ldots, \theta_{s j}) 
\label{eq:holang}
\end{equation}
in terms of the holonomy angles $\theta_{\ell j}$ of $F$.

The orthogonal projection $\Pi_m: L^2(S^{n-k-1}) \longrightarrow H_m$ induces a map, which we still call $\Pi_m$, on 
$L^2(SF)$. Associated to (\ref{eq:decompHm}) are orthogonal subprojectors $\Pi_{mp}$, $p = 1, \ldots, \mu_m$, so 
that $\Pi_m = \bigoplus_p \Pi_{mp}$.  We often write $f_{mp} = \Pi_{mp} f$. 

If $f \in L^2(F)$, then using polar coordinates on each fibre, we write
\[
f(z,y) = f(z,r\omega) = \sum_{m=0}^\infty \sum_{p=1}^{\mu_m} f_{mp}(z,r,\omega).
\]
Let $\{v_1^*, \ldots, v_k^*\}$ be the basis for the lattice $\Lambda^*$ dual to $\Lambda := 
\{\sum a_j v_j: a_j \in \ZZ \}$, i.e.\  $\langle v_i, v_j^*\rangle = \delta_{ij}$
for all $i,j$, and define 
\[
f_{mp}(z,r,\omega) = e^{2 \pi i \langle z, A_{mp}\rangle} f_{mp}^{\#}(z,r,\omega), \qquad \mbox{where} \qquad
2\pi A_{mp} = \sum_{j=1}^k \alpha_{mpj} v_j^*.
\]
Then each $f^{\#}_{mp}$ is simply periodic,
\[
f^{\#}_{mp}(z+ v_j, r,\omega) = f^{\#}_{mp}(z,r,\omega), \quad j = 1, \ldots, k,
\]
hence standard Fourier series on $T$ gives the decomposition
\begin{equation}
f_{mp}(z,r,\omega) = \frac{1}{(2\pi)^k} \sum_{v^* \in \Lambda^*} \hat{f}_{mpv^*}(r,\omega) e^{2\pi i \langle z,  v^* + A_{mp}\rangle },
\end{equation}
where the $\hat{f}_{mpv^*}$ are the Fourier coefficients of $f^{\#}_{mpv^*}$. Clearly
\[
\int_{F} |f|^2\, dV = \sum_{m=0}^\infty \sum_{p = 1}^{\mu_m} \sum_{v^* \in \Lambda^*} \int_0^\infty \int_{S^{n-k-1}}
|\hat{f}_{mpv^*}(r,\omega)|^2\, r^{n-k-1}\, dr d\omega.
\]
For simplicity below, we write $I = (m,p,v^*)$, so $I$ ranges over the subset 
\[\calI = \{(m,p,v^*) \in
{\mathbb N} \times {\mathbb N} \times \Lambda^*: 1 \leq p \leq \mu_m\},\] 
and denote by
$\Pi_I$ the corresponding orthogonal projector on $L^2(SF)$ and $\phi_I(z,\omega)$ the associated 
eigenfunction.  We also simply write $f_I$ instead of $\hat{f}_I$. 

The Laplacian $\Delta_F$ is induced from the standard Laplacian on $T \times Y$ and
has the polar coordinate representation
\[
\Delta_F  = - \del_r^2 - \frac{n-k-1}{r}\del_r + \frac{1}{r^2} \Delta_{S^{n-k-1}} + \Delta_{T}.
\]
For each $I = (m,p,v^*)$ we have
\begin{equation}
(\Delta_F f)_I = \left(-\del_r^2 - \frac{n-k-1}{r}\del_r + \frac{m(m+n-k-2)}{r^2} + b_I^2\right) f_I,
\label{eq:redlapF}
\end{equation}
where
\begin{equation}
b_I = 2\pi |A_{mp} + v^*|. 
\label{eq:defCI}
\end{equation}
In the following we let $\Delta_I$ denote the operator on the right in (\ref{eq:redlapF}) acting on the
$I^{\mathrm{th}}$ component. 

\subsection{The radial Fourier-Bessel decomposition}
Using the spectral decomposition on $L^2(SF)$, we have reduced $\Delta_F$ to the family of ordinary differential operators
$\{\Delta_I\}_{I \in \calI}$.  It follows from standard ODE theory that $\mbox{spec}\,(\Delta_I) = [b_I^2, \infty)$, and that 
this spectrum is purely absolutely continuous. Our next goal is to describe the continuous spectral decomposition 
associated to each $\Delta_I$.  The fact that the threshold $b_I^2$ depends on $I$, and in particular that the set $\{b_I^2\}$
accumulates at $0$ if the cusp is irrational is the cause of the main difficulties below when summing over $I$. 
However, for the moment, we are still analyzing each operator $\Delta_I$ individually.

The spectral decomposition for $\Delta_I$ is determined by its spectral measure, which in turn is given via Stone's formula 
in terms of the resolvent. Thus consider the family of equations
\[
(\Delta_I - \la) f = 0, \qquad \la \in \cc \setminus [b_I^2, \infty). 
\]
If we conjugate with $r^{-(n-k-2)/2}$ and set $\la = t^2 + b_I^2$, then this can be recognized as a Bessel equation:
\[
\Delta_I - \la = - r^{-2} r^{-\frac{n-k-2}2} \Big((r\pl_r)^2-\Big(\frac{n-k-2}{2}+m\Big)^2+ t^2 r^2\Big) r^{\frac{n-k-2}2},
\]
so the space of homogeneous solutions is spanned by Bessel functions (see Appendix)
\[ 
r^{-\frac{n-k-2}2}J_{\frac{n-k-2}{2}+m}(rt) \qquad \mbox{and} \qquad  r^{-\frac{n-k-2}2}H^{(1)}_{\frac{n-k-2}{2}+m}(rt).
\]
The convention here is that ${\rm Im}(t) > 0$ when $\la \in \cc \setminus [b_I^2,\infty)$, which corresponds to the choice 
${\rm Im}(\sqrt{\mu}) > 0$ when $\mu\in\cc \setminus \rr^+$.  The Schwartz kernel of the resolvent thus has the explicit expression 
($H$ is the Heaviside function)
\begin{equation}
\begin{array}{l}
\qquad \qquad R_I(t;r,r')  :=  (\Delta_I - t^2 - b_I^2)^{-1}  \\[0.5ex] =  (rr')^{-\frac{n-k-2}{2}}J_{\frac{n-k-2}{2}+m}(r t ) H^{(1)}_{\frac{n-k-2}{2}+m} (r't)H(r'-r)  
\\[0.5ex] \qquad +  (rr')^{-\frac{n-k-2}{2}}H^{(1)}_{\frac{n-k-2}{2}+m}(rt) J_{\frac{n-k-2}{2}+m}(r't)H(r-r').
\end{array}
\label{resolvDeltaI}
\end{equation}
From this, Stone's formula gives the spectral measure of $\Delta_I-b_I^2$ as
\begin{equation}\label{spmeas}
\begin{split}
dE_I(t;r,r')=&\ \frac{1}{i\pi} (R_I(t;r,r')-R_I(-t;r,r'))\, t \, dt \\
=&\ \frac{2}{i\pi} (rr')^{-\frac{n-k-2}{2}}J_{\frac{n-k-2}{2}+m}(rt)J_{\frac{n-k-2}{2}+m}(r't)\, t\, dt.
\end{split}
\end{equation}

This leads to the spectral resolution of a function $f_I \in L^2(\rr^+; r^{n-k-1}dr)$: 
\[
f_I(r) = \int^{\oplus} \tilde{f}_I(r,t)\, dE_I(t), \qquad \tilde{f}_I(r,t) = \int_{r'=0}^\infty f_I(r')\, dE_I(t,r,r') dr'.
\]
In the next subsection we invoke the functional calculus to define functions of $\Delta_I$ by the formula
\begin{equation}\label{GPI}
G(\Delta_I)=\int_{0}^\infty G(t^2+b_I^2)\, dE_{I}(t).
\end{equation}
for a suitable class of functions $G(t)$. 

\section{The resolvent when $\Gamma$ is an elementary parabolic group}
We now turn to the construction and analysis of the resolvent of the Laplacian on the quotient $X_c =\Gamma_\infty \backslash \hh^{n+1}
\simeq \rr^+_x\x F$ of hyperbolic space by an elementary parabolic group $\Gamma_\infty$ fixing $\infty$ in the half-space 
model. It is convenient to work with the unitarily equivalent operator 
\[
P = x^{-n/2}\Delta_{X_c} x^{n/2} = - (x\del_x)^2 + x^2 \Delta_F + \frac{n^2}{4}
\]
acting on $L^2(\rr^+ \times F; \frac{dx}{x} dv_F)$. This decomposes into components
\begin{equation}\label{defPI}
P = \bigoplus_I P_I; \qquad P_I = - (x\del_x)^2 + x^2 \Delta_I + \frac{n^2}{4}.
\end{equation}
where $\Delta_I$ is the operator of \eqref{eq:redlapF}.
These are each symmetric on $L^2(\rr^+_x \times \rr^+_r; r^{n-k-1}\frac{dx}{x}dr)$. In the following we write
\[d\mu = r^{n-k-1}x^{-1} dx dr.\]
Using the same ODE formalism as above (i.e. Sturm-Liouville theory), along with (\ref{GPI}), we obtain 
the Schwartz kernel of the resolvent $R(s)=(P -s(n-s))^{-1}$ of $P$
\begin{equation}\label{resolvante}
\begin{split}
R(s; \, x,r\omega,z, & x',r'\omega',z') =  \\
\sum_{I} \int_{0}^\infty  & \Big(K_{s-n/2}\Big(x\sqrt{t^2+b_I^2}\Big)I_{s-n/2}\Big(x'\sqrt{t^2+b_I^2}\Big)H(x-x') \\
+ & K_{s-n/2}  \Big(x'\sqrt{t^2+b_I^2}\Big)I_{s-n/2}\Big(x\sqrt{t^2+b_I^2}\Big)H(x'-x)\Big) \\
\times \frac{i^{n-k+2m}}{\pi^2} & J_{\frac{n-k-2}{2}+m}(rt)J_{\frac{n-k-2}{2}+m}(r't)\, t\, dt \, \phi_I(z,\omega)\, \phi_I(z',\omega'),
\end{split}
\end{equation}
which, we show below, is valid when ${\rm Re}(s)>n/2$ as an operator acting on $L^2$.  

\subsection{Continuation of the resolvent to $\cc$ in weighted $L^2$ spaces}
We now show that the explicit formula (\ref{resolvante}) is the resolvent $R(s)$ of $P$ in ${\rm Re}(s)>n/2$ and that it has  
a meromorphic continuation to the entire complex $s$-plane in weighted spaces. 
\begin{prop}\label{firstcase}
The resolvent for $P$ is given in $\{{\rm Re}(s)>n/2\}$ by the expression \eqref{resolvante} as a continuous operator 
on $L^2(X_c,\frac{dx}{x}dv_F)$. If $\chi\in C_0^\infty([0,\infty)\x F)$, $N > 0$ and $\rho:=x/(x+1)$, then the operator 
$\chi R(s)\chi$ extends from the half-plane ${\rm Re} (s) > n/2$ to ${\rm Re}(s)>n/2-N$ as a 
holomorphic family of bounded operators from $\rho^NL^2(\frac{dx}{x}dv_F)$ to $\rho^{-N}L^2(\frac{dx}{x}dv_F)$. 
\label{pr:ac}
\end{prop}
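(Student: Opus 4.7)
My plan is to reduce the construction of $R(s)$ to a family of one-dimensional Sturm--Liouville problems by diagonalizing $\Delta_F$ as in \S 4, solve each of these problems explicitly using modified Bessel functions, and then reassemble via Stone's formula. The weighted holomorphic extension then follows from the boundary behaviour of the Bessel kernels near $x=0$.

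First, using $P = \bigoplus_I P_I$ from \eqref{defPI} and the spectral measure $dE_I(t)$ of $\Delta_I$ from \eqref{spmeas}, I would further reduce to the fibre operators
\[
L_{I,t} := -(x\pl_x)^2 + x^2(t^2+b_I^2) + \tfrac{n^2}{4}
\]
on $L^2(\rr^+, dx/x)$. Writing $u = x^{n/2} v$ converts $(L_{I,t}-s(n-s))u = 0$ into the modified Bessel equation of order $s - n/2$ with argument $x\sqrt{t^2+b_I^2}$, whose two independent solutions are $I_{s-n/2}$ and $K_{s-n/2}$. A Wronskian computation yields the Green's function as the ``smaller/bigger'' combination appearing under the integral in \eqref{resolvante}; when ${\rm Re}(s) > n/2$, $x^{n/2}I_{s-n/2}(x\cdot)$ is square-integrable near $x=0$ and $x^{n/2}K_{s-n/2}(x\cdot)$ decays exponentially as $x\to\infty$, confirming this is the genuine $L^2$-resolvent of $L_{I,t}$. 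Reassembling across $(I,t)$ through the isometric spectral transforms of \S 4 produces exactly \eqref{resolvante}, and $L^2$-boundedness follows mode-by-mode from the elementary bound $\|(L_{I,t}-s(n-s))^{-1}\| \leq \mathrm{dist}(s(n-s),[t^2+b_I^2+n^2/4,\infty))^{-1}$ together with Plancherel.

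For the extension to $\{{\rm Re}(s) > n/2 - N\}$ in the weighted spaces, the cutoff $\chi$ confines integration in $x,x'$ to a bounded interval in $[0,\infty)$, so the only obstruction to bounded $L^2$-mapping comes from the behaviour of the kernel as $x$ or $x'\to 0$. The small-argument expansions $I_\nu(z)\sim (z/2)^\nu/\Gamma(\nu+1)$ and $K_\nu(z)\sim \Gamma(\nu)(z/2)^{-\nu}/2$ show that on the triangle $x<x'$ the kernel behaves like $x^{s-n/2}(x')^{-(s-n/2)}$ times smooth data, so multiplication by $\rho^N \asymp x^N$ at $x=0$ yields a bounded operator $\rho^N L^2 \to \rho^{-N} L^2$ as soon as $N > n/2-{\rm Re}(s)$. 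Holomorphy in $s$ is inherited from joint analyticity of Bessel functions in $(\nu,z)$, together with the fact that the product $I_\nu K_\nu$ is entire in $\nu$ (via the Frobenius identities expressing $K_\nu$ through $I_{\pm\nu}$), combined with dominated convergence for the $t$-integral and the $I$-sum.

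The main obstacle will be uniform absolute convergence of the sum over $I\in\calI$ in the irrational case, where the thresholds $b_I$ accumulate at $0$ and the $K_\nu(x\sqrt{t^2+b_I^2})$ factor threatens to blow up as $t+b_I\to 0$. The essential regularizing mechanism, flagged in \S 1.1, is that $dE_I(t) = \calO(t^{2m+1})$ as $t\searrow 0$, so a sufficiently large spherical-harmonic degree $m$ absorbs the singularity; combined with Weyl-type bounds on the $\phi_I$ and the compact support of $\chi$, this should yield absolute convergence uniformly on compacta of $\{{\rm Re}(s)>n/2-N\}$. The \emph{holomorphy} (rather than mere meromorphy) of the extension in the interior of this strip reflects the absence of $L^2$-eigenvalues for $P$ on the model cusp, the spectrum being purely absolutely continuous on $[n^2/4,\infty)$.
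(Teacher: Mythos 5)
Your overall strategy — diagonalize $\Delta_F$ into Fourier--Bessel modes, solve the resulting one-dimensional Bessel ODEs explicitly, and reassemble — is exactly the paper's, and you have correctly identified the central regularizing mechanism (the vanishing of $dE_I(t)$ like $t^{2m+n-k-2}$ as $t\searrow 0$) that tames the sum over modes with large spherical-harmonic degree $m$, including in the irrational case where $b_I\to 0$. However, there is a genuine gap in your treatment of the \emph{zero-threshold} modes $I\in\mathcal{I}_0$, i.e.\ those with $b_I=0$ and \emph{small} $m$. For these, the $t$-integral carries the full singularity $t^{2\mathrm{Re}(s)-n}$ at $t=0$ and the factor $t^{2m+n-k-2}$ is not enough once $\mathrm{Re}(s)$ is taken far below $n/2$ with $m$ bounded; these modes are not handled by your ``large $m$ absorbs the singularity'' argument, and they exist regardless of rationality. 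The paper's proof isolates $\{I\in\mathcal{I}_0 : m\le N\}$ (a finite set) and treats them by observing that $P_I - s(n-s)$ is, after the change of variable $t=s-k/2$ and a unitary conjugation by $x^{(n-k)/2}$, the restriction of $\Delta_{\mathbb{H}^{n-k+1}} - t(n-k-t)$ to the $m$-th spherical-harmonic subspace; the required meromorphic continuation of $x^N\chi R_{\mathbb{H}^{n-k+1}}(t)\chi x^N$ is then imported from Guillop\'e--Zworski. Without this reduction your argument does not close.

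Two smaller points. First, your explanation that the extension is holomorphic ``because there are no $L^2$-eigenvalues for $P$ on the model cusp'' is a non sequitur: absence of point spectrum rules out poles only in $\{\mathrm{Re}(s)>n/2\}$, while poles in the continuation (resonances) arise from entirely different mechanisms. In fact, in the paper's argument the zero modes a priori contribute possible poles at $s\in k/2-\mathbb{N}_0$ inherited from $R_{\mathbb{H}^{n-k+1}}(t)$, and one must track these explicitly rather than appeal to absence of eigenvalues. Second, no ``Weyl-type bounds on the $\phi_I$'' are needed: the $\phi_I$ are orthonormal and the estimate is carried out mode by mode, with uniformity in $I$ being the only issue; what is required is the explicit uniform control of $|F_{s,x,x'}(\tau)|$ on $\operatorname{supp}\chi\times\operatorname{supp}\chi$ via the Bessel estimates, together with Schur's test (which, incidentally, is how the paper proves the $L^2$-boundedness in the physical half-plane — not the abstract operator-norm resolvent bound you invoke, though that does work too).
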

\begin{proof}  By spectral theory and the fact that the essential spectrum of $P$ is $[n^2/4,\infty)$ (see \cite{LP}),
the resolvent $(P-s(n-s))^{-1}$ is meromorphic with finite rank poles (corresponding to the finite set of $L^2$ eigenvalues) in $\{{\rm Re}(s)>n/2\}$. 
In order to extend the resolvent, we first show that the expression \eqref{resolvante} is the actual resolvent in the physical half-plane 
$\{{\rm Re}(s)>n/2\}$. To prove the $L^2$ boundedness of \eqref{resolvante}, decompose $f \in L^2(X_c,\frac{dx}{x}dv_F)$ as 
$f = \sum_I f_I(x,r) \phi_I(z,\omega)$ where $f_I(x,r)\in L^2(\rr^+\times \rr^+;  d\mu)$. Then
\[R(s)f (x,r\omega,z)= \sum_{I} (R_I(s)f_I)(x,r)\phi_I(z,\omega) \]
and it suffices to show that for ${\rm Re}(s)>n/2$, 
\begin{equation}\label{toshow}
||R_I (s)f_I||_{L^2(d\mu)}\leq C||f_I ||_{L^2(d\mu)}
\end{equation}
for some $C$ independent of $I$. We thus write 
\[ R_{I}(s)f_I (x,\cdot)=\int_{0}^{\infty}F_{s,x,x'}(\sqrt{\Delta_I}\,)f_I(x',\cdot) \,\frac{dx'}{x'},\]
where 
\[F_{s,x,x'}(\tau):=K_{s-\ndemi}(x\tau)I_{s-\ndemi}(x'\tau)H(x-x') +I_{s-\ndemi}(x\tau)K_{s-\ndemi}(x'\tau)H(x'-x).\]
This function is holomorphic in $\cc$ as a function of $s$. By \eqref{besseldef} and \eqref{estbessel}, 
\begin{equation}
\label{ki}
\begin{gathered}
|K_{s-\ndemi}(x\tau)|\leq \left\{\begin{array}{ll}
C (x\tau )^{-|{\rm Re}(s)-n/2|} & \textrm{ if }x\tau \leq 1\\
C e^{-x\tau}/\sqrt{x\tau} & \textrm{ if }x\tau >1
\end{array}\right.\\
|I_{s-\ndemi}(x\tau)|\leq \left\{\begin{array}{ll}
C (x\tau)^{{\rm Re}(s)-n/2} & \textrm{ if }x\tau\leq 1\\
C e^{x\tau}/\sqrt{x\tau} & \textrm{ if }x\tau>1,
\end{array}\right.
\end{gathered}
\end{equation}
for $s \neq n/2$ (and $C$ depend on $s$). We thus estimate for ${{\rm Re}(s)\geq n/2}$
\[|F_{s,x,x'} (\tau)| \leq  
\left\{\begin{array}{ll}
C (\min(x,x')/(\max(x,x'))^\demi  & \textrm{ if both } x\tau, x'\tau  \geq 1 \\
C(\min(x,x')/\max(x,x'))^{{\rm Re}(s)-n/2} & \textrm{ if both } x\tau, x'\tau  < 1\\
C (x/x')^{{\rm Re}(s)-n/2}& \textrm{ if } x \tau <1<x'\tau \\
C(x'/x)^{{\rm Re}(s)-n/2}& \textrm{ if } x'\tau <1<x\tau. 
\end{array}\right.\]
and in particular $N_{s}(x,x'):=\sup_{\tau\in\rr^+}|F_{s,x,x'}(\tau)|$ is a kernel such that for ${\rm Re}(s)>n/2$
\[  
\sup_{x\in\rr^+}\int_0^\infty N_s(x,x')\frac{dx'}{x'}\leq C , \,\,  \sup_{x'\in\rr^+}\int_0^\infty N_s(x,x')\frac{dx}{x}\leq C
\]  
for some $C>0$ depending on $s$. By Schur's lemma, it is the kernel of a bounded operator on $L^2(dx/x)$ with norm
less or equal to $C$, which proves \eqref{toshow}.

Now we study the continuation to $s\in\cc$. We first decompose the set $\mc{I}$ of indices $mpv^*$ as
$\mc{I}=\mc{I}_{>}\cup \mc{I}_{0}$, where 
\[I \in \mc{I}_0 \iff b_I=0\]
for $b_I$ as in \eqref{eq:defCI}. Assume that $\chi(x,y,z) = \varphi(x)\psi(r)$ with $r=|y|$; this is invariant under $\Gamma_\infty$, hence 
descends to $X_c$. Again for $f \in L^2(X_c)$, $f = \sum_I f_I(x,r) \phi_I(z,\omega)$ 
with $f_I(x,r)\in L^2(\rr^+\times \rr^+;  d\mu)$, then for $N>0$ fixed, we must show
\begin{equation}\label{toshow2}
||\chi \rho^N R_I (s) \rho^N \chi f_I||_{L^2(d\mu)}\leq C||f_I ||_{L^2(d\mu)}
\end{equation}
with holomorphic dependance on $s$, where $\rho=x/(1+x)$. Note that $\rho$ can be replaced by $x$ since $x$ is bounded on ${\rm supp}(\chi)$.
We write
\[(\chi R_{I}(s)\chi f_I) (x,r)=\int_{0}^{\infty}\chi(x,r) F_{s,x,x'}(\sqrt{\Delta_I}\, )(f_I \chi) \,dx'/x',\]
and proceed as above. We estimate 
\begin{equation}
\label{fla}
|F_{s,x,x'} (\tau)| \leq  
\left\{\begin{array}{ll}
C & \textrm{ if both } x\tau, x'\tau  \geq 1 \\
C\max((xx'\tau^2)^{{\rm Re}(s)-n/2},1) & \textrm{ if both } x\tau, x'\tau  < 1\\
C \max((x \tau)^{{\rm Re}(s)-n/2},1)& \textrm{ if } x \tau <1<x'\tau \\
C\max((x'\tau )^{{\rm Re}(s)-n/2},1)& \textrm{ if } x'\tau <1<x\tau. 
\end{array}\right.
\end{equation}
where $C$ depends on $s$ only. If $x, x' \in \mbox{supp}(\varphi)$, 
${\rm Re}(s) >n/2 -N$ and $\tau \geq b_I$,  then 
\[(xx')^N |F_{s,x,x'}(\tau)| \leq C(1 + b_I^{2{\rm Re}(s)-n}), \quad \textrm{if }I\in\mc{I}_>.\]
Hence, by the spectral theorem, since $\sqrt{\Delta_I} \geq b_I$,  $(xx')^N F_{s,x,x'}(\sqrt{\Delta_I})$ is  bounded on
$L^2(\rr^+,r^{n-k-1}dr)$ with norm controlled by $ C(1 + b_I^{2{\rm Re} (s)-n})$. 

The norm blows-up when $b_{I}\to 0$ and ${\rm Re}(s)<n/2$, which makes the continuation to the non-physical half-plane delicate, since we 
need an estimate which is uniform in $I$. The key to controlling the terms for which $b_I$ are arbitrarily close (but not equal) to $0$
is that the spectral measure $dE_I(t)$ of $\Delta_I-b_I^2$ is $\calO(t^{2m+n-k-2})$ as $t\to 0$ if $I=mpv^*$. This can be seen from estimate \eqref{besselvanish}Ê below on Bessel functions and even more directly from the order of vanishing at $r=0$ of the regular solutions of the ODE
\[\Big(-(r\pl_r)^2+\Big(\frac{n-k-2}{2}+m\Big)^2-r^2\Big)u(r)=0.\]
on $[0,\infty)$. The terms for which $b_I=0$ are dealt with differently by reducing to a lower dimensional 
hyperbolic space, in some sense. Let us explain this.

Fix $N>0$ and let $\eps_N:=\min_{I\in\mc{I}_>,\, m\leq N}b_I$. Let $\calI_{N} = \{ I \in \calI_>: m \leq N\}$. By what we have just established,
if $I \in \calI_{N}$, then for $\eps>0$ small and ${\rm Re}(s)>n/2-N+\eps$
\[\begin{split}
||x^{N-\eps}\psi R_I(s)\chi x^Nf_I||_{L^2_r}\leq&  \int_{0}^\infty ||x^{N-\eps}
\psi F_{\la,x,x'}(P_I){x'}^{N-\eps}\psi f_I(x',.)||_{L^2_r}|\varphi(x')|{x'}^{\eps}\frac{dx'}{x'}\\
& \leq C\int_0^\infty {x'}^{\eps}|\varphi(x')|.||f_I(x',.)||_{L^2_r}\frac{dx'}{x'} \leq C' ||f_I||_{L^2(d\mu)}  
\end{split}\]
where $L^2_r:=L^2(\rr^+,r^{n-k-1}dr)$ and $C' = \calO(C^N/\eps_N^N))$ as $N\to \infty$. But
$\varphi(x)x^{\eps}||f_I||_{L^2(d\mu)}$ is in  $L^2(\rr^+,dx/x)$ with norm bounded by $C||f_I||_{L^2(d\mu)}$
so \eqref{toshow2} is valid uniformly for $I\in\calI_{N}$. 

The estimates on $F_{s,x,x'}$ above also imply that for any $I$ whatsoever, 
\[
||(xx')^N\indic_{[1,\infty)}(\sqrt{\Delta_I})F_{s,x,x'}(\sqrt{P_I})|| \leq C 
\]
as an operator on $L^2( r^{n-k-1}dr)$, so arguing just as above, we see that for all $I$, 
\begin{equation}\label{indic}
||x^N \chi R_I(s) \indic_{[1,\infty)}(\sqrt{\Delta_I}) \chi x^N f ||_{L^2(d\mu)} \leq C ||f_I||_{L^2(d\mu)}.
\end{equation}

The next step, therefore, is to establish the uniform bound for ${\rm Re}(s)>n/2-N+\eps$
\[
||x^{N-\eps}\psi\indic_{(0,1)}(\sqrt{\Delta_I})F_{s,x,x'}(\sqrt{\Delta_I})\chi {x'}^{N-\eps}f_I(x',\cdot )||_{L^2(r^{n-k-1}dr)} \leq C ||f_I(x',\cdot)||_{L^2(r^{n-k-1}dr)}
\]
when $m \geq N$ and $x, x' \in \mbox{supp}(\varphi)$.  Using \eqref{spmeas}, \eqref{GPI}  and the bounds \eqref{fla} 
we can estimate the Schwartz kernel for $m\geq N$
\begin{multline*}
|\psi(r)\psi(r')(xx')^{N-\eps}\indic_{[0,1]}(\sqrt{\Delta_I})F_{s,x,x'}(\sqrt{\Delta_I})(r,r')|  \\ 
\leq C\sup_{m \geq N;t\in(0,1)}|J_{\frac{n-k-2}{2}+m}(rt)J_{\frac{n-k-2}{2}+m}(r't)|(1 + t^{2{\rm Re}(s) - n}),
\end{multline*}
again uniformly in $x,x'\in\supp (\varphi)$.  But now, since $r, r' \leq C$ in $\supp \, \psi$ and $0 < t < 1$, 
we have by \eqref{estbessel}
\begin{equation}\label{besselvanish}
|J_{\frac{n-k-2}{2}+m}(rt)|\leq \frac{(rt)^{\frac{n-k-2}{2}+m}}{\Gamma(\frac{n-k}2+m)},
\end{equation}
and hence
\begin{multline*}
|\psi(r)\psi(r')(xx')^{N-\eps}\indic_{[0,1]}(\sqrt{\Delta_I})F_{s,x,x'}(\sqrt{\Delta_I})(r,r')| \\ \leq C
\sup_{m \geq N;\,  t\in(0,1)}(1 +  t^{n-k-2 + 2m - 2N}) |\psi(r)\psi(r')|,
\end{multline*}
uniformly in $x,x'\in\supp\, \varphi$. This is bounded provided $m \geq N - \frac12 (n-k-2)$. 
From this bound on the Schwartz kernel, we obtain directly that 
\begin{equation}\label{indic2}
||x^N \chi R_I(s) \indic_{[0,1]}(\sqrt{\Delta_I}) \chi x^N f ||_{L^2(d\mu)} \leq C ||f_I||_{L^2(d\mu)}.
\end{equation}
Notice that the holomorphy in $s \in \{{\rm Re}(s)>n/2-N\}$ follows immediately from the holomorphy of 
$K_{s- \ndemi}(z)$ and $I_{s-\ndemi}(z)$ when $z\in(0,\infty)$ and the fact that $|\pl_s K_s(z)|$ and $|\pl_sI_s(z)|$ satisfy the 
same type of bounds as $|K_s(z)|,|I_s(z)|$ by Cauchy's formula.\\

It remains finally to deal with the terms with indices in $\{ I=mpv^*\in \mc{I}_0; m\leq N\}$. 
If $I=mpv^*\in \mc{I}_0$, then $\Delta_I$ acting on $L^2(\rr^+,r^{n-k-1}dr)$
is unitarily equivalent to the Laplacian acting on $\rr^{n-k}$ but restricted on the subspace 
$L^2(\rr^+, r^{n-k-1}dr; H_m)$ under the decomposition $L^2(\rr^{n-k})=\oplus_{m=0}^\infty L^2(\rr^+,r^{n-k-1}dr; H_m)$.
In addition, we can rewrite
\[ 
P_I-s(n-s) = -(x\pl_x)^2+x^2\Delta_I+\frac{(n-k)^2}{4}-t(n-k-t), \quad \textrm{ with }t:=s-k/2, 
\]
and hence deduce that $x^{\frac{n-k}{2}}R_I(s)x^{-\frac{n-k}{2}}$ is unitarily equivalent, in ${\rm Re}(s)>n/2$, to 
$R_{\hh^{n-k+1}}(t):=(\Delta_{\hh^{n-k+1}}-t(n-k-t))^{-1}$ acting on $L^2(\rr^+,\frac{dx}{x^{n-k+1}}; L^2(\rr^+,r^{n-k-1}dr; H_m))$, 
with $t=s-k/2$, under the decomposition
\[
L^2(\hh^{n-k+1})=L^2(\rr^+,\frac{dx}{x^{n-k+1}}; L^2(\rr^{n-k}))\simeq \bigoplus_{m=0}^\infty 
L^2\Big(\rr^+,\frac{dx}{x^{n+1}}; L^2(\rr^+,r^{n-k-1}dr; H_m)\Big).
\]
But it is known \cite{GZ2} that the resolvent $x^N\chi R_{\hh^{n-k+1}}(t)\chi x^N$ has a meromorphic (resp. holomorphic) 
extension if $n-k+1$ is even (resp. odd), with simple poles at $t\in -\nn_0$ and finite rank residues. 
In particular, since $\chi(x,r)$ commutes with the decomposition into spherical harmonics, this implies
that $x^{N}\chi R_I(s)\chi x^N$ has a meromorphic continuation with the same property (the poles then lie 
in $k/2-\nn_0$).  The proof is now complete.
\end{proof}

We also prove a technical lemma which is useful later, the proof of which follows the same lines as
the argument above. 
\begin{lem}\label{technical}
Let  $\chi\in \calC_0^\infty([0,\infty)\x F)$; then for any $N\in\nn$, 
there exist operators $M_\ell(s):\calC_0^\infty(X_c)\to L^2(F)$ such that for any $\varphi\in \calC_0^\infty(X_c)$
\begin{equation}\label{decompos}
 (\chi R_{X_c}(s)\varphi)(x,y,z) - \chi \sum_{\ell=0}^{N}x^{s+2\ell}(M_{\ell}(s)\varphi)(y,z) \in x^{{\rm Re}(s)+2N}L^2(X_c)
 \end{equation}
and $\Gamma(s-n/2+\ell+1)M_\ell(s)$ is meromorphic in $s\in\cc$, with at most 
simple poles at $s_0\in k/2-\nn_0$, and finite rank residues. 
\end{lem}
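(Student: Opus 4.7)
The strategy is to obtain the expansion by substituting the Taylor expansion of the modified Bessel function $I_{s-n/2}$ at the origin into the explicit kernel \eqref{resolvante}. Recall $R_{X_c}(s) = x^{n/2}R(s)x^{-n/2}$, and that the piece of \eqref{resolvante} relevant near $x=0$ (with $x'$ ranging over the support of $\varphi$, $x<x'$ then holds for all small $x$) carries the factor $I_{s-n/2}(x\sqrt{t^2+b_I^2})\,K_{s-n/2}(x'\sqrt{t^2+b_I^2})$. The Taylor series
\[
I_{s-n/2}(z) = \sum_{\ell=0}^\infty \frac{(z/2)^{s-n/2+2\ell}}{\ell!\,\Gamma(s-n/2+\ell+1)}
\]
thus feeds into \eqref{resolvante} as powers $x^{s-n/2+2\ell}$ which, after conjugation by $x^{n/2}$, become the desired powers $x^{s+2\ell}$. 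Accordingly I define $M_\ell(s)$ as the operator obtained by replacing $I_{s-n/2}(x\sqrt{t^2+b_I^2})$ by its $\ell$-th Taylor monomial in \eqref{resolvante} and factoring out $x^{s+2\ell}$; this produces the explicit prefactor $1/(\ell!\,\Gamma(s-n/2+\ell+1))$, which is precisely why $\Gamma(s-n/2+\ell+1)M_\ell(s)$ (and not $M_\ell(s)$ itself) is claimed to be meromorphic.

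Next, meromorphy of $\Gamma(s-n/2+\ell+1)M_\ell(s)$ is proved by repeating the argument of Proposition \ref{firstcase} term by term. Split $\mc{I} = \mc{I}_> \cup \mc{I}_0$. On $\mc{I}_>$, the lower bound $\sqrt{t^2+b_I^2} \geq b_I > 0$ together with the Bessel bounds \eqref{ki} and \eqref{fla} keep the $t$-integral uniformly bounded on compacts of $\cc$; for summability over $I$ I split the $\sqrt{\Delta_I}$-spectrum into $[0,1)$ and $[1,\infty)$ as in Proposition \ref{firstcase} and use the low-frequency vanishing $dE_I(t)=\calO(t^{2m+n-k-2})$ together with \eqref{besselvanish}, which gives holomorphy of this part on all of $\cc$. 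On $\mc{I}_0$, the unitary equivalence with $R_{\hh^{n-k+1}}(s-k/2)$ acting on the $m$-th spherical harmonic sector (exactly as in the final part of the proof of Proposition \ref{firstcase}) reduces the claim to the corresponding coefficient of the Taylor expansion at $x=0$ of the hyperbolic resolvent; the indicial analysis of \S\ref{secindicial} together with \cite[Prop 3.1]{GZ2} then yields the meromorphic continuation with at most simple poles at $s \in k/2-\nn_0$ and finite rank residues.

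For the remainder bound, I would write
\[
E_N(s,z) := I_{s-n/2}(z) - \sum_{\ell=0}^N \frac{(z/2)^{s-n/2+2\ell}}{\ell!\,\Gamma(s-n/2+\ell+1)} = \frac{(z/2)^{s-n/2+2N+2}}{(N+1)!\,\Gamma(s-n/2+N+2)}\,\rho_N(s,z),
\]
where $\rho_N(s,z)$ is locally uniformly bounded in $(s,z)$ on compacts of $\cc\times\rr^+$, and for large $z$ grows no faster than the Bessel asymptotics \eqref{ki} allow. Substituting $E_N$ for $I_{s-n/2}$ in \eqref{resolvante} and conjugating by $x^{\pm n/2}$ produces a kernel whose $L^2$ mapping bound follows from the Schur estimates in the proof of Proposition \ref{firstcase} applied to the product $\rho_N(s,x\sqrt{t^2+b_I^2})K_{s-n/2}(x'\sqrt{t^2+b_I^2})$, yielding the $x^{\textrm{Re}(s)+2N+2} \subset x^{\textrm{Re}(s)+2N}$ weight claimed. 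The main obstacle, once again, is the lack of a uniform positive lower bound on $b_I$ in the irrational case; this is handled by exactly the same spectral-measure vanishing of $dE_I$ for $I\in\mc{I}_>$ and by reduction to $\hh^{n-k+1}$ for $I\in \mc{I}_0$ that were used to prove Proposition \ref{firstcase}, whence the claim.
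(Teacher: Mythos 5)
Your proposal is correct and follows essentially the same route as the paper: expand $I_{s-n/2}$ in its Taylor series at $0$ inside the explicit kernel \eqref{resolvante} (as expressed via \eqref{expatx=0}), read off $M_\ell(s)$ from the $\ell$-th monomial (with the $1/\Gamma(s-n/2+\ell+1)$ factor explaining the normalization in the statement), control the remainder via the Bessel bounds and the Schur/spectral-measure estimates from Proposition~\ref{firstcase}, and locate the poles by reducing the $\mc{I}_0$ contribution to $R_{\hh^{n-k+1}}(s-k/2)$. No substantive differences from the paper's own argument.
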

\begin{proof}
When ${\rm Re}(s)>n/2$, $x^{-s}R_{X_c}(s)\varphi$ can be written, using \eqref{resolvante}, as 
\begin{equation}\label{expatx=0} 
(x^{-s}R_{X_c}(s)\varphi)(x,\cdot )=x^{\ndemi-s} 
\int_\eps^\infty  I_{s-\ndemi}(x\sqrt{\Delta_{F}})K_{s-\ndemi}(x'\sqrt{\Delta_{F}})(x')^{-\ndemi}\varphi(x',\cdot)\frac{dx'}{x'}
\end{equation}
for $\eps>x>0$ if $\eps>0$ is such that ${\rm supp}(\varphi)\subset \{x>\eps\}$. 
Now, for any $N\in\nn$ and $\tau\in(0,\infty)$, the modified Bessel function $I_{s-n/2}(\tau)$ satisfies 
\[
\Big| I_{s-n/2}(\tau)-(\tau/2)^{s-\ndemi}\sum_{\ell=0}^{N} \frac{2^{-2\ell}\tau^{2\ell}}{\ell!\, \Gamma(s-n/2+\ell+1)} \Big|
\leq C \min(\tau,1)^{{\rm Re}(s)+2N+2}e^{\tau}
\]
for some $C$ depending on $s$. Then, by mimicking the proof of Proposition \ref{firstcase}, we obtain directly that 
if we set 
\[ 
M_{\ell}(s)\varphi= \frac{2^{-2\ell}}{\ell!\, \Gamma(s-n/2+\ell+1)}
\int_{\eps}^\infty \psi \Delta_{F}^{\ell}K_{s-n/2}(x'\sqrt{\Delta_F})x'^{-\ndemi}\varphi(x',\cdot)\frac{dx'}{x'},
\]
then \eqref{decompos} holds, 
where $\psi\in \calC_0^\infty(F)$ and $\chi\psi=\chi$; the integral has 
meromorphic extension in $s\in\cc$ as a function in $L^2(F)$ by the same arguments as in the proof of
Proposition \ref{firstcase}. Poles can arise only when $I\in \mc{I}_0$, hence lie in  $k/2-\nn_0$, and 
their residues have finite rank.
\end{proof}

\subsection{Regularity of the Green kernel up to $\pl\bbar{X}$}\label{0calculus}
We have now established that the family of operators $R_{X_c}(s)$ has an analytic continuation to $\cc$,
albeit with a rather minimal description of the regularity of its integral kernel. Obviously, the structure of this kernel is 
standard in any compact set of $X_c$ by usual elliptic regularity, but near infinity it must be analyzed using a more
involved approach. For the analysis in any relatively compact open set of $\bbar{X}_c$, we can apply the method explained in 
\S \ref{Guillopezworski}. 

To be more precise, let $W := \{x^2 + r^2 \leq R^2\} \subset X_c$ and its partial closure $\bbar{W}_0= 
W \cup (\bbar{W} \cap \{x=0\} )\subset \bbar{X}_c$.  In what follows, $\calC_0^\infty(\bbar{W}_0)$ denotes
the set of smooth functions with compact support in $\bbar{W}_0$ (but a priori not compact in $W_0$), and similarly for 
$\bbar{W}_0\x \bbar{W}_0$.
\begin{prop}\label{smoothness}
Fix $\psi_1,\psi_2\in \calC_0^\infty(\bbar{W}_0)$ with disjoint supports; then $\psi_2R_{X_c}(s)\psi_1$ has an integral kernel which
lies in $x^s{x'}^s\calC_0^\infty(\bbar{W}_0\x \bbar{W_0})$. 
\end{prop}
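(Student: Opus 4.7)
The plan is to transport the Guillop\'e-Zworski parametrix construction from \S \ref{Guillopezworski} to a relatively compact region containing $\bbar{W}_0$. The key observation is that since $\bbar{W}_0$ is bounded in the horospherical coordinate $r$, it stays uniformly away from the cusp tip at $r=\infty$, and therefore every point of $\bbar{W}_0$ has a neighborhood in $\bbar{X}_c$ isometric to a half-ball $B\subset\bbar{\hh}^{n+1}$. Thus the local parametrix construction of \S \ref{Guillopezworski} applies verbatim in this setting; the fact that $\bbar{X}_c$ is globally noncompact and has a cusp end never enters near $\bbar{W}_0$.

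First I would choose a finite cover $\{\calU_j\}$ of a neighborhood of $\bbar{W}_0$ by such half-balls, together with cutoffs $\chi_j,\hat{\chi}_j\in\calC^\infty(\bbar{X}_c)$ with $\hat{\chi}_j\equiv 1$ on $\supp \chi_j\subset\calU_j$ and $\chi:=\sum_j\chi_j\equiv 1$ on $\supp \psi_1\cup \supp \psi_2$. Setting $Q_0(s):=\sum_j\hat{\chi}_jR_j(s)\chi_j$ and iteratively improving via the indicial equation \eqref{indicialeq} exactly as in \S \ref{Guillopezworski}, I obtain for every $N$ a parametrix $Q_N(s)$, meromorphic in $s$ with poles at $-\nn_0$ of finite rank, satisfying
\[
(\Delta_{X_c}-s(n-s))\,Q_N(s)=\chi+K_N(s),
\]
where $K_N(s)\in x^{s+2N}(x')^{s}\calC^\infty(\bbar{X}_c\x\bbar{X}_c)$ is compactly supported near $\bbar{W}_0\x\bbar{W}_0$.

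Using the existence of $R_{X_c}(s)$ from Proposition \ref{firstcase}, composition on the left with $R_{X_c}(s)$ gives $R_{X_c}(s)\chi=Q_N(s)-R_{X_c}(s)K_N(s)$; since $\chi\psi_1=\psi_1$,
\[
\psi_2 R_{X_c}(s)\psi_1=\psi_2 Q_N(s)\psi_1-\psi_2 R_{X_c}(s)K_N(s)\psi_1.
\]
The first term is immediately of the desired form: disjointness of $\supp \psi_1,\supp \psi_2$ forces the model resolvents $R_j(s)$ appearing in $Q_N(s)$ to be evaluated off their diagonals, and \eqref{outsidediag} together with Lemma \ref{composition} yield that its Schwartz kernel lies in $x^s(x')^s\calC_0^\infty(\bbar{W}_0\x\bbar{W}_0)$.

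The main obstacle is the residual term $\psi_2 R_{X_c}(s)K_N(s)\psi_1$. Here I would apply Lemma \ref{technical} to $R_{X_c}(s)$ acting (in the first variable) on the compactly supported smooth function $K_N(s)\psi_1$, which vanishes to order $x^{s+2N}$ at $\{x=0\}$. The expansion terms $x^{s+2\ell}M_\ell(s)(K_N(s)\psi_1)$ are smooth in a half-neighborhood of $\{x=0\}$ and meromorphic in $s$ with poles at $k/2-\nn_0$ of finite rank, while the remainder lies in $x^{s+2N'}L^2$; combining this with interior elliptic regularity for $(\Delta_{X_c}-s(n-s))u=K_N(s)\psi_1$ and the Schauder-type bootstrap (integrating the equation against increasingly many vector fields tangent to $\{x=0\}$) shows that $\psi_2 R_{X_c}(s)K_N(s)\psi_1\in x^s\calC^M$ up to $\{x=0\}$ for $M=M(N)\to\infty$ as $N\to\infty$. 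A symmetric argument using the adjoint relation $R_{X_c}(s)^*=R_{X_c}(\bar s)$ (equivalently, repeating the whole parametrix construction with the roles of $\psi_1,\psi_2$ swapped) installs the $(x')^s$ factor with smooth dependence on the second variable. Since $N$ is arbitrary, we conclude $\psi_2 R_{X_c}(s)\psi_1\in x^s(x')^s\calC_0^\infty(\bbar{W}_0\x\bbar{W}_0)$.
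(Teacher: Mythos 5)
Your treatment of $\psi_2 Q_N(s)\psi_1$ is fine (disjoint supports kill the diagonal singularity), but the argument for the residual term $\psi_2 R_{X_c}(s)K_N(s)\psi_1$ has a genuine gap, and it is precisely where the paper does something different.

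You invoke Lemma \ref{technical}, but that lemma requires the right-hand factor $\varphi$ to lie in $\calC_0^\infty(X_c)$, i.e.\ compactly supported \emph{in the interior}; its proof uses ${\rm supp}(\varphi)\subset\{x>\eps\}$ so that \eqref{expatx=0} is a clean integral over $x'>\eps>x$. The function $K_N(s)\psi_1 f$ (for a test function $f$) is supported up to the boundary $\{x=0\}$ and only vanishes like $x^{s+2N}$ there, so it is not in $\calC_0^\infty(X_c)$ and the lemma does not apply as stated. More seriously, the subsequent ``interior elliptic regularity plus Schauder-type bootstrap with vector fields tangent to $\{x=0\}$'' is exactly the point at which the construction must confront the degeneracy of $\Delta_{X_c}-s(n-s)$ at the boundary at infinity; regularity up to $x=0$ with the weight $x^s$ is not obtained by an ordinary bootstrap but is the content of the 0-calculus of \cite{MaCPDE,MM}. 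Finally, your ``symmetric argument'' to install the $(x')^s$ factor does not obviously close: after $R_{X_c}(s)$ has been applied on the left, the $m'$-dependence has been smeared through the integral kernel and there is no clean structure left in $m'$ on which to run the mirror version of the argument; one would at best get regularity in each variable separately, not jointly.

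The paper avoids all of this by building a \emph{right} parametrix $Q'_\infty(s)$ as well, with error $K'_\infty(s)\in x^s(x')^\infty\calC_0^\infty$, and then using the sandwich identity
\[
\chi R_{X_c}(s)\chi=\chi Q_\infty(s)-Q'_\infty(s)K_\infty(s)+K'_\infty(s)R_{X_c}(s)K_\infty(s).
\]
The point is that the middle resolvent factor is then trapped between two kernels each vanishing to infinite order at $x=0$ in the ``interior'' variable (cf.\ \eqref{mappKinfty}), and the only thing one needs to know about $R_{X_c}(s)$ is the weighted $L^2$ boundedness of Proposition \ref{firstcase}. This is what makes the remainder land in $(xx')^s\calC^\infty$ jointly in both variables. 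To repair your argument you would either need to introduce the right parametrix as in the paper, or prove a genuinely stronger version of Lemma \ref{technical} that handles sources vanishing to high order at $\{x=0\}$ together with a careful joint-regularity argument; neither step is automatic from what you have written.
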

\begin{proof} We actually show something slightly more refined since we give a sharp characterization of the structure of the resolvent 
kernel in $\bbar{W}_0\x \bbar{W}_0$.  We use the parametrix construction explained in \S \ref{Guillopezworski}, hence shall be brief and 
refer the reader to that discussion. Cover $W$ by finitely many open charts 
$\calU_1,\dots,\calU_\ell$ such that $W \subset \calU:=\cup_{j=1}^\ell \calU_j$, where each $\calU_j$ is identified by an isometry $\iota_j$ to the half-ball $B=\{(x,y)\in\rr^+\x\rr^n; x^2+|y|^2<1\}$ in $\hh^{n+1}$, like 
in \S \ref{Guillopezworski}. Using $\iota_j$, we can systematically identify operators on $\calU_j$ with their counterparts on $B$.
Let $\chi_j,\hat{\chi}_j\in \calC_0^\infty(\bbar{W}_0)$ such that $\chi:=\sum_{j=1}^\ell\chi_j$ is equal to $1$ on $\bbar{W}_0$, $\hat{\chi}_j$ 
is supported in $\calU_j$ and $\hat{\chi}_j=1$ on the support of $\chi_j$. By the argument of \S \ref{Guillopezworski}, for each $N\in \nn$, 
we can construct a kernel $Q_{N,j}(s),K_{N,j}(s)$ supported in $\calU_j\x \calU_j$ such that $(xx')^{-s}Q_{N,j}(s)$ and $x^{-s-2N}{x'}^{-s}K_{N,j}(s)$ 
are respectively smooth functions on $\bbar{W}_0\x\bbar{W}_0\setminus{\rm diag}$ and $\bbar{W}_0\x\bbar{W}_0$, meromorphic with finite rank simple poles contained in $-\nn_0$, and such that
\[
(\Delta_{X_c}-s(n-s))Q_{N,j}(s)=\chi_j+K_{N,j}(s)
\] 
Finally, set $Q_N(s):=\sum_{j=1}^\ell Q_{N,j}(s)$,  so that
\[
(\Delta_{X_c}-s(n-s))Q_{N}(s)=\chi+K_{N}(s), \quad K_N(s):=\sum_{j=1}^\ell K_{N,j}(s),
\]
where $K_N(s)\in x^{s+2N}{x'}^s\calC_0^\infty(\bbar{\calU}_0\x\bbar{\calU}_0)$ (here $\calU_0:=\calU\cup (\bbar{\calU}\cap\{x=0\})$ is 
the partial closure of $\calU$). 
We can go further and use Borel's lemma to construct an asymptotic limit $Q_\infty(s)$ of the $Q_N$, which satisfies
\[
(\Delta_{X_c}-s(n-s))Q_{\infty}(s)=\chi+K_{\infty}(s), \quad K_\infty(s)\in x^\infty {x'}^s 
\calC_0^\infty(\bbar{\calU}_0\x\bbar{\calU}_0)
\]

Exchanging the functions $\hat{\chi}_j$ and $\chi_j$ and applying the same method yields a right parametrix, i.e.\ operators 
$Q'_\infty(s)$ and $K'_\infty(s)$ such that 
\[
Q'_\infty(s)(\Delta_{X_c}-s(n-s))=\chi+K'_{\infty}(s), \quad K'_\infty(s)\in x^s {x'}^\infty 
\calC_0^\infty(\bbar{\calU}_0\x\bbar{\calU}_0),
\] 
where $Q'_\infty(s)$ and $K'_\infty(s)$ have the same meromorphic properties as $Q_\infty(s)$ and $K_\infty(s)$ with respect to $s$.

The error terms $K_\infty(s)$ and $K'_\infty(s)$ have Schwartz kernels with the properties that, 
for any $N>|{\rm Re}(s)-\ndemi|$, as functions of $m$, 
\begin{equation}\label{mappKinfty}
\begin{gathered}
K_\infty(s;\cdot,m) \in x^{s}\calC^\infty(\bbar{\calU}_0; x^NL^2(\calU_0)),\\
K'_\infty(s;m,\cdot) \in  x^s\calC^\infty(\bbar{\calU}_0; x^NL^2(\calU_0))
\end{gathered}
\end{equation}
where $L^2(\calU_0)$  is with respect to the hyperbolic measure on $X_c$. 
By a standard argument, $R(s)$ agrees with $Q_\infty(s)$ up to more regular term: indeed,
when ${\rm Re}(s) > n/2$, 
\[
\begin{gathered}
R_{X_c}(s)(\Delta_{X_c} - s(n-s))Q_\infty(s) = Q_\infty(s) = R_{X_c}(s)(\chi + K_\infty(s)) \\
Q'_{\infty}(s)(\Delta_{X_c} - s(n-s))R_{X_c}(s) = Q'_\infty(s) = (\chi + K'_\infty(s))R_{X_c}(s),
\end{gathered}
\]
which shows that
\[
\chi R_{X_c}(s) \chi = \chi Q_\infty(s) - Q'_\infty(s)K_\infty(s) + K'_\infty(s)R_{X_c}(s)K_\infty(s)
\]
Using Proposition \ref{firstcase} and \eqref{mappKinfty},  the last term on the right hand side  
extends meromorphically to $s\in\cc$ and has Schwartz kernel in $(xx')^s\calC^\infty(\bbar{W}_0\x\bbar{W}_0)$.
By Lemma \ref{composition}, the operator $Q'_\infty(s)K_\infty(s)$ has Schwartz kernel in 
$(xx')^s\calC^\infty(\bbar{W}_0\x\bbar{W}_0)$ and we conclude  that 
\begin{equation}\label{RX_c}
\chi R_{X_c}(s) \chi - \chi Q_\infty(s) \in x^s{x'}^s \calC_0^\infty(\bbar{\calU}_0\x \bbar{\calU}_0).
\end{equation}
for all $s\in \cc\setminus -\nn_0$. To finish the proof it suffices to note that $Q_\infty(s)$ is a sum of explicit terms, each of 
which are in $x^s{x'}^s \calC_0^\infty(\bbar{\calU}_0\x \bbar{\calU}_0\setminus {\rm diag})$ by \eqref{outsidediag}. 
\end{proof}

\section{Continuation of the resolvent on $X$}\label{seccontinu}
In this section we pass from the `local' result, i.e.\ the continuation of the resolvent on the model cusp $X_c$ to its
continuation on an arbitrarily geometrically finite quotient $X$. 

As before, let $x$ be a smooth function on $X$ which equals the upper half-space coordinate $x$ in each cusp 
neighbourhood $\calU^c_j$ (these are chosen to be disjoint) and which is a global boundary defining function on $\bbar{X}$. We also use
$\rho = x/(1+x)$, which is still a boundary defining function, but is bounded in the cusp neighbourhoods.

We now prove the following Theorem, which implies Theorem \ref{th1}. 
\begin{theo}\label{mainth}
Let $X = \Gamma \backslash \hh^{n+1}$ be geometrically finite and $R_X(s):=(\Delta_X-s(n-s))^{-1}$ the resolvent of $\Delta_X$, 
defined as a bounded operator on $L^2(X)$ for ${\rm Re}(s)>n/2$ and $s(n-s)\notin \sigma_{\rm pp}(\Delta_X)$. Fix
$\psi\in \calC_0^\infty(\bbar{X})$; then for each $N>0$, $\psi R_X(s)\psi:\rho^NL^2(X) \to \rho^{-N}L^2(X)$ extends 
as a bounded operator meromorphically to $\{{\rm Re}(s)>n/2-N\}$ with all poles of finite rank.
\end{theo}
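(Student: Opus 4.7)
The plan is to extend the Guillop\'e--Zworski parametrix of \S \ref{Guillopezworski} by adjoining a third type of local piece, one at each cusp, built from the model-cusp resolvent $R_{X_{c,j}}(s)$ whose meromorphic continuation was established in Proposition \ref{firstcase}. The conceptual point is that Propositions \ref{firstcase} and \ref{smoothness}, together with Lemma \ref{technical}, provide at each cusp exactly the three ingredients---a meromorphic model resolvent, boundary regularity of its Schwartz kernel, and an indicial-type Taylor expansion in $x$---that $R_{\hh^{n+1}}(s)$, Lemma \ref{composition}, and \S \ref{secindicial} provide at regular boundary points.

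Concretely, using the covering $\{\calU^r_j\}_{j\in J^r}\cup\{\calU^c_j\}_{j\in J^c}$ of the ends from \S 2, I choose compatible cut-offs $\chi^r_j\leq \hat\chi^r_j$, $\chi^c_j\leq \hat\chi^c_j$ supported in the respective neighbourhoods, set $\chi := \sum_j(\chi^r_j+\chi^c_j)$, pick $\eta\in\calC_0^\infty(X)$ with $\eta(1-\chi)=1-\chi$ and $s_0\gg n/2$, and define
\[
Q_0(s) := \sum_{j\in J^r}\hat\chi^r_j R_{\hh^{n+1}}(s)\chi^r_j + \sum_{j\in J^c}\hat\chi^c_j R_{X_{c,j}}(s)\chi^c_j + \eta R_X(s_0)(1-\chi).
\]
Then $(\Delta_X-s(n-s))Q_0(s) = \mathrm{Id}+K_0(s)$, with $K_0(s)$ a sum of commutator terms $[\Delta_X,\hat\chi]R(\cdot)\chi$ and a zero-order shift; the kernel of $K_0(s)$ vanishes like $x^{s+2}{x'}^{s}$ at infinity by \eqref{outsidediag} and Proposition \ref{smoothness}. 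I then iteratively solve away the Taylor expansion in $x$ of this error: in the regular charts this is the construction of \S \ref{secindicial}; in the cusp charts I use the expansion supplied by Lemma \ref{technical} together with the elementary indicial identity $(\Delta_{X_c}-s(n-s))x^{s+2\ell}\varphi = (n-2s-2\ell)2\ell\, x^{s+2\ell}\varphi + x^{s+2\ell+2}(\ldots)$ coming from $\Delta_{X_c}=-(x\pl_x)^2+nx\pl_x+x^2\Delta_F$. After $N$ steps I obtain operators $Q_N(s), K_N(s)$ meromorphic in $s\in\cc$ with finite-rank poles contained in $(-\nn_0)\cup\bigcup_j(k_j/2-\nn_0)$, satisfying
\[
(\Delta_X-s(n-s))Q_N(s)=\mathrm{Id}+K_N(s), \qquad K_N(s) \in x^{s+2N}{x'}^s \calC^\infty(\bbar X\x\bbar X) + (\text{compactly supported smooth}).
\]

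The rest is standard analytic Fredholm theory. For $s_0$ sufficiently large and the supports of $\chi^r_j,\chi^c_j$ close enough to $\pl\bbar X$, I expect $\|K_N(s_0)\|_{\rho^N L^2\to\rho^N L^2}<\tfrac12$, as in \cite{GZ2} for the regular contributions and by Proposition \ref{firstcase} for the cusp contributions, and $K_N(s)$ is compact on $\rho^N L^2(X)$ for $\mathrm{Re}(s)>n/2-N$ by the kernel decay at $\pl\bbar X$ together with a Rellich-type argument inside the cusp. The operator $Q_N(s)$ maps $\rho^N L^2 \to \rho^{-N}L^2$ boundedly with poles of finite rank, and the analytic Fredholm theorem inverts $\mathrm{Id}+K_N(s)$ as a meromorphic family with finite-rank poles; then $R_X(s)=Q_N(s)(\mathrm{Id}+K_N(s))^{-1}$ is the meromorphic continuation to $\{\mathrm{Re}(s)>n/2-N\}$, and letting $N\to\infty$ covers all of $\cc$. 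The main obstacle, as usual in the cusp setting, is the bookkeeping at low frequencies of $\Delta_F$: one must verify that the indicial iteration in the cusp genuinely gains $x^2$ at each step uniformly in the spectrum of $\Delta_F$, and that the poles of the coefficients $M_\ell(s)$ from Lemma \ref{technical}, once multiplied by the commutator-supported cut-offs $[\Delta_X,\hat\chi^c_j]$, remain of finite rank rather than producing essential singularities---this is where the careful structure of Proposition \ref{firstcase} and the vanishing $dE_I(t)=O(t^{2m+n-k-2})$ of the spectral measure are essential.
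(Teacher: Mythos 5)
Your proposal follows essentially the same route as the paper: the three-piece parametrix (interior truncation, half-ball pieces via $R_{\hh^{n+1}}(s)$, cusp pieces via the model resolvent of Proposition \ref{firstcase}), followed by iterative removal of the Taylor expansion in $x$ of the commutator error using the indicial equation together with Lemma \ref{technical} and Proposition \ref{smoothness} to control the cusp coefficients, and finally analytic Fredholm theory for $\mathrm{Id}+K_N(s)$. One minor divergence: rather than arranging $\|K_N(s_0)\|<1/2$ by shrinking supports as in the cusp-free case of \cite{GZ2} (which is less obviously possible here since the cusp model is an exact, not merely approximate, local isometry), the paper simply modifies $Q_N$ by a finite-rank operator if $\mathrm{Id}+\til K_N(s_0)$ fails to be invertible, which is the more robust way to start the Fredholm argument.
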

\begin{proof}
We use the regular and cusp neighbourhoods $\calU_j^r$ and $\calU_j^c$, and a corresponding subordinate partition of unity
$\{\chi_0, \chi_j^{r/c}\}$ for this cover, with $\chi_0\in \calC_0^\infty(X)$ and $\mbox{supp}\,\chi_j^{r/c} \subset
\calU_j^{r/c}$. We also choose functions $\hat{\chi}_0\in \calC_0^\infty(X)$, $\hat{\chi}^{r/c}_j\in \calC^\infty(X)$ with
similar supports which are equal to $1$ on the supports of the corresponding `unhatted' functions.

Let $R^r_j(s)$ denote the kernel of the resolvent on $\hh^{n+1}$, restricted to a standard half-ball $B$ and transferred
back to $\calU_j^r$, and $R_j^c(s)$ the kernel of the resolvent on the model cusp $\Gamma_j \backslash \hh^{n+1}$
as constructed in the previous section, again transferred back to $\calU_j^c$. (Here $\Gamma_j$ is a representative of
the conjugacy class of the parabolic subgroup fixing the $j$-th cusp.) 

Fix a large value $s_0\gg n$ and some $\psi\in \calC^\infty(\bbar{X})$ which equals $1$ on the supports of $\nabla \hat{\chi}_0$ and
every $\nabla\hat{\chi}^{r/c}_j$ and with compact support in $\bar{X}$. As in \S 4.2, the initial parametrix is
\[ 
Q_0(s):= \hat{\chi}_0R_X(s_0)\chi_0+\sum_{j\in J^r}\hat{\chi}^r_j R^r_j(s)\chi^r_j+\sum_{j\in J^c}\hat{\chi}^c_j R^c_j(s)\chi^c_j.
\]
The point of the first term, of course, is  to capture the interior singularity of the resolvent, whereas the other terms
also capture the dependence of the parametrix in $s$ near all boundaries. This satisfies 
\[
(\Delta_X-s(n-s))Q_0(s)\psi=\psi(1+\psi L_0(s)\psi+\psi K_0(s)\psi)
\]
where
\[
\begin{gathered}
K_0(s):=\sum_{j\in J^r}K^r_{0,j}(s)+\sum_{j\in J^c}K^c_{0,j}(s)\, \textrm{ with } K_{0,j}^{r/c}:= 
\sum_{j\in J^{r/c}}[\Delta_X,\hat{\chi}^{r/c}_j] R^{r/c}_j(s)\chi^{r/c}_j, \\
L_0(s):=[\Delta_X,\hat{\chi}_0]R_X(s_0)\chi_0+(s_0(n-s_0)-s(n-s))\hat{\chi}_0R_X(s_0)\chi_0.
\end{gathered}
\]
Since $\chi_0,\hat{\chi}_0$ are compactly supported, $L_0(s)$ is compact on any weighted space $\rho^NL^2(X)$.  

The next step is to improve the part coming from the sum of the $K^r_{0,j}(s)$ as in the proof of Proposition \ref{smoothness}. 
We construct operators $Q_{N,j}^r(s)$ supported in $\calU^r_j\x \calU^r_j$ which satisfy
\[
(\Delta_X-s(n-s))Q^r_{N,j}(s)=\chi^r_j+K^r_{N,j}(s)
\] 
with $K^r_{N,j}(s)\in x^{s+2N}{x'}^s \calC^\infty(\bbar{\calU}^r_j\x \bbar{\calU}_j^r)$. Poles of $Q^r_{N,j}(s),K^r_{N,j}(s)$ lie in $-\nn_0$ 
with finite rank residues, and furthermore the error terms $K^r_{N,j}(s)$  are compact on $\rho^NL^2$. 
  
To improve the parametrix in the cusp neighbourhoods, we use that $[\Delta_X,\hat{\chi}^c_j]$ is a first order differential operator 
with coefficients supported in a neighbourhood $\calV^c_j \subset \calU^p_j$ which is relatively compact in $\bbar{X}$.
Using the coordinates $(x,y,z)$ in the model cusp,
\[
\Delta_X=-(x\pl_x)^2+nx\pl_x+x^2(\Delta_y+\Delta_z),
\]  
we can take $\hat{\chi}^p_j$ to be a smooth function of $(x^2,|y|^2)$ (in particular, independent of 
$y/|y|$ and $z$), and hence 
\begin{equation}\label{commutator}
[\Delta_X,\hat{\chi}^c_j]= x^2(a_j(x^2,y)+b_j(x^2,y)\pl_x+[\Delta_y,\hat{\chi}^c_j](x^2,y))
\end{equation} 
where $a_j$ and $b_j$ are smooth as functions of $x^2$ and $y$. Using Proposition \ref{smoothness}, we deduce directly that 
\[
K_{0,j}^c(s)=[\Delta_X,\hat{\chi}^c_j] R^c_{j}(s)\chi^c_j\in x^{s+2}{x'}^s\calC^\infty(\bbar{W}_j\x\bbar{W_j})
\]
where $W_j \subset \calU^c_j$ is some open set containing ${\rm supp}(\hat{\chi}^c_j)$ and 
$\bbar{W}_j$ is its partial closure which includes its boundary at $x=0$. 

We now claim that, with $m=(x,y,z)$, there is an expansion as $x \to 0$ of the form  
 \begin{equation}\label{expK0j}
 K_{0,j}^c(s;m,m')=\psi(m)\sum_{\ell=1}^N x^{s+2\ell}M_{j,2\ell}(s;y,z,m')+\calO(x^{{\rm Re}(s)+2N+1})
\end{equation}
for some $\psi\in \calC_0^\infty(\bbar{W}_j)$ which equals $1$ on ${\rm supp}(\hat{\chi}^c_j)$, and with coefficient
functions $M_{j,2\ell}(s)\in {x'}^s \calC_0^\infty((\bbar{W}_j\cap\{x=0\})\x \bbar{W}_j)$ such that 
$\Gamma(s-n/2+\ell)M_{j,2\ell}(s)$
is meromorphic in $s\in \cc$ with at most simple poles at contained in $\ndemi-\demi\nn$, 
and the residues are kernels of some finite rank operators. 
Indeed, by Proposition \ref{smoothness},  we know that $x^{-s-2}K^c_{0,j}(s;m,m')$ has an expansion at $x \to 0$ in powers $x^{\ell}$ with coefficients $M_{j,\ell}(s)\in {x'}^s \calC_0^\infty((\bbar{W}_j\cap\{x=0\})\x \bbar{W}_j)$, which are meromorphic in $s$. The $M_{j,\ell}(s)$ are Schwartz kernels of some operators, and uniquely determined
by the choice of $x$, it remains to prove that $M_{j,2\ell+1}(s)=0$ 
and that $\Gamma(s-n/2+\ell)M_{j,2\ell}(s)$ has finite rank poles (contained in $\ndemi-\demi\nn$).
Take any $\varphi\in \calC_0^\infty(\calU^c_j)$ and $\varphi'\in \calC_0^\infty(\bbar{W}_j)$, then 
Lemma \ref{technical} shows that $x^{-s}\varphi' (R^c_j(s)\varphi)
\sim \sum_{\ell}x^{2\ell}N_{j,2\ell}(s)\varphi$ at $x=0$, for some operators $N_{j,2\ell}(s)$  
mapping $\calC_0^\infty(\calU_j)$ to $L^2(\bbar{\calU}_j\cap\{x=0\})$ 
and with $\Gamma(s-n/2+\ell)N_{j,2\ell}(s)$ having only simple poles of finite rank. 
By the expression in \eqref{commutator}, the same is true for $x^{-s-2}\varphi' K^c_{0,j}(s)\varphi$ (but mapping to $H^{-1}$ instead of $L^2$) and this proves the claim.

As before, we solve away the expansion of $K^c_j(s;m,m')$ at $x=0$: for any $k\in\nn_0$ and $F\in \calC^\infty 
(\bbar{W}_j)$ which depends smoothly on $x^2$, we have
\begin{equation} \label{indicialeq2}
(\Delta_{X}-s(n-s))\frac{x^{s+2k}F(x^2,y,z)}{2k(n-2s-2k)}=x^{s+2k}F(x^2,y,z)+\frac{x^{s+2k+2}H_k(x^2,y,z)}{2k(n-2s-2k)}
\end{equation} 
with $H_{k}\in \calC^\infty(\bbar{W}_j)$ smooth in $x^2$ and independent of $s$. Combining \eqref{indicialeq2} with 
\eqref{expK0j} and the fact that each $\Gamma(s-n/2-\ell)M_{j,\ell}(s)$ has at most first order poles with finite rank residues, 
we can construct for all $N\in\nn$ an operator $Q^c_{N,j}(s)$ which is holomorphic in $\{{\rm Re}(s)>n/2-N\}$ with
$Q^c_{N,j}(s)-\hat{\chi}^c_j R^c_j(s)\chi^c_j\in x^{s+2N}(x')^s\calC_0^\infty(\bbar{W}_j\x\bbar{W}_j)$ and 
\[
(\Delta_X-s(n-s))Q^c_{N,j}(s)=\chi^c_j+K^c_{N,j}(s)
\] 
for some $K^c_{N,j}(s)\in x^{s+2N}{x'}^s \calC_0^\infty(\bbar{W}_j\x\bbar{W}_j)$  holomorphic in $\{{\rm Re}(s)>n/2-N\}$. 

We finally obtain a good parametrix
\[
Q_N(s):=\sum_{j\in J^r} Q^r_{N,j}(s)+\sum_{j\in J^c}Q^c_{N,j}(s)+\hat{\chi}_0R(s_0)\chi_0
\]
since 
\[
\begin{gathered}
(\Delta_X-s(n-s))Q_N(s)\psi = \psi(1+\psi(L_0(s)+ K_N(s))\psi)=: \psi(1+\til{K}_N(s)) \\
\textrm{ with }K_N(s)= \sum_{j\in J^c} K^c_{N,j}(s)+\sum_{j\in J^r}K^r_{N,j}(s) \in x^{s+2N}{x'}^{s}\calC_0^\infty(\bbar{X}\x\bbar{X}).
\end{gathered} 
\]
Notice that the support of $K_N(s)$ is compactly supported and does not intersect the cusps in either set of variables.
Hence $\til{K}_N(s)$ is compact on $\rho^NL^2(X) \subset  L^2(X)$ and meromorphic in $\{{\rm Re}(s)>n/2-N\}$ with poles 
of finite multiplicy. Using standard arguments, we can modify $Q_N(s)$ by a finite rank operator if $1+\til{K}_N(s_0)$ is not 
invertible, to make the new remainder invertible, and this can be done without changing the regularity 
properties of $Q_N(s),\til{K}_N(s)$. 

We  then invoke the analytic Fredholm theorem to show that $(1+\til{K}_N(s))^{-1}$ has a meromorphic extension 
to $\{{\rm Re}(s)>n/2-N\}$ with poles of finite multiplicity, as an operator bounded on $\rho^NL^2(X)$. Thus
\[ 
\psi R_X(s)\psi= \psi Q_N(s)\psi(1+\til{K}_N(s))^{-1}
\]
gives the meromorphic extension of the resolvent in $s\in \{{\rm Re}(s)>n/2-N; s\notin n/2-\nn\}$. 
as an operator from $\rho^NL^2(X)$ to $\rho^{-N}L^2(X)$.

As in the proof of Prop. \ref{smoothness}, we can obtain the extension to all of $\cc$ directly, rather than only to any half-plane,
using Borel summation to solve away the entire expansion as $x \to 0$, and a standard pseudodifferential 
parametrix construction to correct the compactly supported error part $L_0(s)$.  This yields $Q_\infty(s)$, with 
the all same properties as $Q_N(s)$, which satisfies
\[
(\Delta_X-s(n-s))Q_\infty(s)\psi = \psi(1+  K_\infty(s))
\]
where $K_\infty(s)\in \rho^{\infty}{\rho'}^{s} \calC_0^\infty(\bbar{X}\x\bbar{X})$ is a residual term with support contained 
in ${\rm supp}(\psi)\x {\rm supp}(\psi)$ and $(1+\psi K_\infty(s_0)\psi)$ invertible.  By the analytic Fredholm
theorem again, 
\[
R_X(s)\psi=  Q_\infty(s)\psi(1+ K_\infty(s))^{-1}
\]
and this is meromorphic in $s\in\cc$ with poles of finite multiplicity. Then $(1+K_\infty(s))^{-1}=1+S_\infty(s)$  where
$S_\infty(s)=-K_\infty(s)+K_\infty(s)(1+K_\infty(s))^{-1}K_\infty(s)$ has a kernel in in $\rho^{\infty}{\rho'}^{s}
\calC^\infty(\bbar{X}\x\bbar{X})$ and has support contained in ${\rm supp}(\psi)\x {\rm supp}(\psi)$. This gives that
\begin{equation}\label{formulaext}
R_X(s)\psi =Q_\infty(s)\psi +Q_\infty(s)\psi S_\infty(s)
\end{equation}
and the operator $R_X(s)\psi$  has the same mapping properties as $Q_\infty(s)\psi$. 
\end{proof}

\begin{remark}\label{moregeneral} 
Let $(X,g)$ be a manifold which admits a decomposition $X=K\cup_{i\in J^c} E^c_j
\cup_{j\in J^r} E_j^r$  where $(K,g)$ is a smooth compact manifold with boundary, $(E^c_j,g)$ are isometric to standard 
cusp neighbourhoods (and thus have constant curvature) and $(E^r_j,g)$ are isometric to 
\[ 
\{(x,y)\in\rr^+\x \rr^n; x^2+|y|^2\leq 1\} \textrm{ with metric }g=(dx^2+h(x^2))/x^2
\]
where $u\in[0,1]\to h(u)$ is a one parameter smooth family of tensors on $\{|y|\leq 1\}$. Combining this with the
parametrix construction from \cite{MM} -and \cite{GuiDMJ} for the issue about the points $n/2-\nn$-, 
the same proof as above yields the meromorphic continuation of the 
resolvent $R(s)$ of $\Delta_g$ to $s\in\cc$, with poles of finite rank.  
\end{remark}

\section{Finer description of the resolvent}
In Theorem \ref{mainth}, we have shown that the resolvent $R_X(s)$ continues meromorphically in $s$ as an operator acting on weighted $L^2$ spaces. 
As part of this, we obtained detailed information about the Schwartz kernel $R_X(s;m;m')$ on any compact region 
$K\subset \bbar{X} \x\bbar{X}$.  We now show how to obtain alternate descriptions of $R_X(s)$ valid in certain regions
of $\cc$ and which are more precise in certain asymptotic regimes: first, we obtain a representation of $R_X(s)$ as a sum 
of translates by group elements of the free space resolvent, which converges when ${\rm Re}(s)>(n-1)/2$ and provides
good asymptotics in the cusp region; after that, we examine its Fourier analytic description more closely to 
obtain better information about asymptotics on $\partial \bbar{X}$ when $s$ lies in the closed half-plane $\{{\rm Re}(s)\geq n/2\}$. 

\subsection{The resolvent $R_X(s)$ as a sum over $\Gamma$}
Let $X_c=\Gamma_\infty \backslash  \hh^{n+1}$ with $\Gamma_\infty$ an elementary parabolic group of rank $k$, fixing $\infty$, 
with generators $(\gamma_1,\dots,\gamma_k)$. As in \S 2, we assume (by passing to a finite index subgroup) that each 
$\gamma_j$ acts on $(x,y,z) \in \hh^{n+1}$ by $\gamma_j(x,y,z) = (x,A_jy , z+v_j)$ where $v_j\in \rr^k$ and $A_j\in SO(n-k)$.
For any $\gamma \in \Gamma_\infty$, we also write 
\begin{equation}\label{gammaxyz}
\gamma (x,y,z) = (x, A_\gamma y, z + v_\gamma)
\end{equation} 
where $v_\gamma$
is in the lattice generated by the $v_j$ and $A_\gamma\in {\rm SO}(n-k)$.  A fundamental domain for this action is $\mc{F} = \rr^+ \times \rr^{n-k} 
\times \calF_T$, where $\calF_T$ is a (compact) fundamental Dirichlet domain for the induced lattice on $\rr^k$. 
We sometimes abuse notation by identifying a point $w \in \mc{F}$ with its image in $X_c$. Both $x$ and $r = |y|$ descend to $X_c$. 
\begin{prop}\label{poincareseries1}
If $w, w' \in \mc{F}$, then the resolvent kernel $R_{X_c}(s;w,w')$ can be written as
\begin{equation}\label{average}
R_{X_c}(s;w,w') = \sum_{\gamma \in \Gamma_\infty} R_{\hh^{n+1}} (s; w, \gamma w');
\end{equation}
this converges in $\calC^\infty$ (apart from the diagonal singularity) locally on compact sets of $\mc{F} \times \mc{F}$ 
for ${\rm Re}(s) > k/2$ and agrees with the continuation of $R_{X_c}(s;w,w')$ there. 
If $\chi \in \calC^{\infty}_0(\bbar{X}_c)$ and $\psi\in \calC_b^\infty(\bbar{X}_c)$ have disjoint 
supports, then for ${\rm Re}(s) > k/2$ and any $j \geq 0$ and multi-indices $\alpha,\beta$, there exists $C>0$ such that
\[
|\partial_x^j \partial_y^\alpha \partial_z^\beta [(xx')^{-s}\psi(w)R_{X_c}(s;w,w')\chi(w')]| \leq C 
(1 + x^2 + |y|^2)^{(k-|\alpha|-|\beta|-j)/2 - \mathrm{Re}(s)}.
\] 
\end{prop}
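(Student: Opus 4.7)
The plan is to prove \eqref{average} in two steps: first, establish absolute, locally uniform $\calC^\infty$ convergence of the series in $\{{\rm Re}(s)>k/2\}$ and identify its sum with the meromorphically continued kernel $R_{X_c}(s;w,w')$; second, read off the pointwise bound directly from this series representation.

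The convergence rests on \eqref{formulemodel}, which gives $R_{\hh^{n+1}}(s;w,\gamma w')=C(s)\tau^s(1+\calO(\tau^2))$ as $\tau:=\tau(w,\gamma w')\to 0$, with
\[
\tau(w,\gamma w')=\frac{2xx'}{D_\gamma},\qquad D_\gamma:=|y-A_\gamma y'|^2+|z-z'-v_\gamma|^2+x^2+(x')^2,
\]
in view of the decomposition \eqref{gammaxyz}. Since $\Gamma_\infty$ is abelian of rank $k$, the translations $\{v_\gamma\}$ form a rank-$k$ lattice $\Lambda\subset\rr^k$; since $A_\gamma\in\mathrm{O}(n-k)$, $|A_\gamma y'|=|y'|$ stays bounded on compact sets. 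Hence $D_\gamma\geq c(1+x^2+|y|^2+|v_\gamma|^2)$ for all but finitely many $\gamma$, uniformly on compacts, so only finitely many terms escape the regime of convergence of \eqref{formulemodel}. The tail is controlled by $\sum_{v\in\Lambda}(1+|v|^2)^{-{\rm Re}(s)}$, which converges for ${\rm Re}(s)>k/2$ by comparison with $\int_{\rr^k}(1+|v|^2)^{-{\rm Re}(s)}\,dv$. Since $|\pl D_\gamma|\leq 2D_\gamma^{1/2}$ for any derivative in $x,y$ or $z$, each differentiation of a summand introduces a factor of $\calO(D_\gamma^{-1/2})$, so the differentiated series converges on the same half-plane, giving $\calC^\infty$ convergence off the diagonal with holomorphic dependence in $s$.

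For the identification, the reindexing $\gamma\mapsto\gamma_0^{-1}\gamma$ shows invariance of the sum under $\Gamma_\infty$ in $w$, while invariance in $w'$ is manifest, so the kernel descends to $X_c\x X_c$. Applying $\Delta_{X_c}-s(n-s)$ term-by-term (justified by the smooth convergence) yields $\sum_\gamma\delta_{\gamma w'}=\delta_{w'}$ on $X_c$, so \eqref{average} is a Green kernel for $\Delta_{X_c}-s(n-s)$. For ${\rm Re}(s)$ sufficiently large, the same comparison shows the sum is $L^2$ in $w$ for fixed $w'$, hence agrees with the $L^2$-resolvent from Proposition \ref{firstcase} by uniqueness of the bounded inverse. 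The spectral analysis of Section 5 shows that $R_{X_c}(s)$ has no poles in $\{{\rm Re}(s)>k/2\}$ (the only possible poles lie in $k/2-\nn_0$), so the identification propagates to the whole half-plane by analytic continuation.

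For the pointwise bound, with $\chi$ compactly supported and $\psi$ of disjoint support, the inequality $D_\gamma\geq c(1+x^2+|y|^2+|v_\gamma|^2)$ holds uniformly over $\supp(\psi)\x\supp(\chi)$. Writing $R_{\hh^{n+1}}(s;w,\gamma w')=C(s)(xx')^sD_\gamma^{-s}(1+\calO(xx'/D_\gamma))$, the normalized kernel satisfies
\[
(xx')^{-{\rm Re}(s)}|\psi(w)R_{X_c}(s;w,w')\chi(w')|\leq C\sum_{v\in\Lambda}(1+x^2+|y|^2+|v|^2)^{-{\rm Re}(s)}\leq C'(1+x^2+|y|^2)^{k/2-{\rm Re}(s)}
\]
by the lattice-to-integral comparison. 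Each $w$-derivative introduces one more factor $\calO(D_\gamma^{-1/2})$ per summand (for $\pl_x$ one uses $x\leq D_\gamma^{1/2}$ applied to $\pl_x D_\gamma^{-s}=-2sx\,D_\gamma^{-s-1}$), so the sharpened integral comparison with exponent ${\rm Re}(s)+M/2$, where $M:=j+|\alpha|+|\beta|$, gives the claimed bound $C(1+x^2+|y|^2)^{(k-M)/2-{\rm Re}(s)}$. The main delicacy is uniformity in $\gamma$: one must verify that the correction factor $1+\calO(xx'/D_\gamma)$ and its $w$-derivatives stay bounded as $\gamma$ varies, which follows because $2xx'/D_\gamma\leq\tau_0<1$ uniformly for all but finitely many $\gamma$, so the series $\sum_j a_j(s)(2\tau)^{2j}$ in \eqref{formulemodel} is a smooth function of $\tau$ with uniformly bounded $w$-derivatives.
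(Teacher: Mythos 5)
Your proof follows essentially the same route as the paper's: write $R_{\hh^{n+1}}(s;w,\gamma w') = \theta^s F_s(\theta)$ with $\theta = 2xx'/D_\gamma$, observe that for all but finitely many $\gamma\in\Gamma_\infty$ one has $D_\gamma \gtrsim 1 + x^2+|y|^2+|v_\gamma|^2$ (using $|A_\gamma y'|=|y'|$ and the lattice structure of the $v_\gamma$), control the tail by the convergent lattice sum $\sum_{v\in\Lambda}(1+x^2+|y|^2+|v|^2)^{-\mathrm{Re}(s)}$ for $\mathrm{Re}(s)>k/2$, note each $w$-derivative supplies an extra factor $D_\gamma^{-1/2}$, and then identify the sum with the resolvent by the same $L^2$/uniqueness argument followed by analytic continuation, using that the only possible poles of $R_{X_c}(s)$ lie in $k/2-\nn_0$. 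This matches the argument in the paper, including the splitting into a cofinite subset $\Gamma'_\infty$ (the tail) and the finitely many remaining terms carrying the diagonal singularity, and the lattice-to-integral comparison; your presentation is just somewhat more compressed.
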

\begin{proof}  
If $w = (x,y,z), w' = (x',y',z') \in \mc{F}$ and $d(w,w')$ is hyperbolic distance, then 
\[
\cosh d(w,w') = \frac{x^2 + {x'}^2 + |y-y'|^2 + |z-z'|^2}{2xx'} := \frac{1}{\theta(w,w')}.
\]
Furthermore, by \eqref{formulemodel}, the resolvent can be written as
\begin{equation}
R_{\hh^{n+1}}(s;w,w') = \theta(w,w')^s F_s(\theta(w,w')),
\end{equation}
where $F_s \in \calC^\infty([0,1))$ depends holomorphically on $s$ in $\mbox{Re}(s) > 0$.
We use the notation \eqref{gammaxyz}.
Note that there exists a subset $\Gamma_\infty' \subset \Gamma_\infty$ with $\Gamma_\infty \setminus \Gamma_\infty'$ finite and an 
$\eps>0$ such that $|z - z' - z_\gamma|^2 \geq \eps|z_\gamma|^2$ for all $\gamma \in \Gamma_\infty'$ and $w,w'\in\mc{F}$. This gives that 
for $\gamma\in \Gamma'_\infty$
\[
\begin{split}
\frac{1}{\theta(w,\gamma w')}-1=&\frac{(x-x')^2+ |y-A_\gamma y'|^2+|z-z'-z_\gamma|^2}{2xx'}\\
\geq  &\frac{(x-x')^2+|y-A_\gamma y'|^2+\eps^2\cjg z_\gamma\cjd^2}{2xx'}.
\end{split}
\]
This is bounded below uniformly by a positive constant depending on $L$ if $x'\leq L$ and so 
is $\theta(w,\gamma w')-1$.
Consequently, for $x'\leq L$, we get 
\begin{equation}\label{seriesright}
\begin{split}
\Big|(xx')^{-s} & \sum_{\gamma \in \Gamma_\infty} R_{\hh^{n+1}} (s; w, \gamma w')\Big| \leq 
 C \sum_{\gamma \in \Gamma'_\infty} \left( x^2 + {x'}^2 + |y - A_\gamma y'|^2 + |z - z' - z_\gamma|^2\right)^{-{\rm Re}(s)}
\end{split}
\end{equation}
for some $C$ depending on $s$ and $L$ only. If we assume that $(w,w')\in\mc{F}\x\mc{F}$ are such that
${x'}^2+|y'|^2\leq L^2$ and $x^2+|y|^2\leq 4L^2$, then the series on the right in \eqref{seriesright} converges uniformly in $w,w'$
as long as ${\rm Re}(s)>k/2$, and for $w$ and $w'$ subject to these constraints, is bounded by a constant. 
If $(w,w')\in\mc{F}\x\mc{F}$ are such that
${x'}^2+|y'|^2\leq L^2$ and $x^2+|y|^2> 4L^2$, then we obtain the estimate for ${\rm Re}(s)>k/2$
\begin{equation}\label{boundabove}
\begin{split} 
\Big|(x x')^{-s}\sum_{\gamma \in \Gamma_\infty} R_{\hh^{n+1}} (s; w, \gamma w')\Big| \leq & C
(1+x^2+|y|^2)^{-{\rm Re}(s)}\sum_{\gamma\in\Gamma'_\infty} \Big(1+\frac{\cjg z_\gamma\cjd^2}{x^2+|y|^2}\Big)^{-{\rm Re}(s)}\\
 \leq & C (1+x^2+|y|^2)^{k/2-{\rm Re}(s)}
\end{split}\end{equation}
for some $C$ depending only on $s,L,L'$. Here we have used the fact that $z_\gamma$ run over a 
lattice in $\rr^k$ to obtain the bound in the second line. The same type of bounds 
are easily obtained for derivatives of any order
with respect to $w,w'$ in compact sets of $\mc{F}\x \{w'\in\mc{F}; {x'}^2+|y'|^2\leq L^2\}$, 
we omit the details. 

For any $\chi\in L^\infty(\mc{F})$ supported in 
${x'}^2+|y'|^2\leq L^2$, the operator with kernel $(xx')^{{\rm Re}(s)}(1+x^2+|y|^2)^{-{\rm Re}(s)}\chi(w')$
is bounded as an operator on $L^2(\mc{F})$ for ${\rm Re}(s)>n/2$. Therefore, since the integral kernel
$K_s(w,w'):=\sum_{\gamma\in\Gamma'_\infty}R_{\hh^{n+1}}(s;w,\gamma w')\chi(w')$ is bounded by \eqref{boundabove}, the operator with kernel $K_s(w,w')$ is bounded on $L^2(\mc{F})$. 
Moreover $K_{s}(w,w')$ clearly solves $(\Delta_{\hh^{n+1}}-s(n-s))
K_{s}(w,w')=0$ for $w,w'$ in compact sets of $\mc{F}\x \{w'\in\mc{F}; (x')^2+|y'|^2\leq L^2\}$.  

There remains to analyze $K'_s(w,w'):=\sum_{\gamma\in \Gamma_\infty\setminus\Gamma'_\infty}R_{\hh^{n+1}}(s;w,\gamma w')\chi(w')$ 
which contains the diagonal singularity (at least in the region where $(x')^2+|y'|^2\leq L^2$).
By the fact that $R_{\hh^{n+1}}(s)$ is bounded on $L^2(\hh^{n+1})$ for ${\rm Re}(s)>n/2$ and that $\Gamma_\infty
\setminus\Gamma'_\infty$ is finite, it is clear that $K'_s(m,m')$ is the kernel of a bounded operator on $L^2(\mc{F})$, moreover
it solves, in the distribution sense, $(\Delta_{\hh^{n+1}}-s(n-s))K'_s(w,w')\chi(w')=\delta(w-w')\chi(w')$.  

Combining the discussions for $K_s$ and $K_s'$, we have thus proved that $R_{X_c}(s)\chi$ has Schwartz kernel given by 
$K_s+K'_s$ in ${\rm Re}(s)>n/2$, and moreover that $K_s(w,w')+K'_s(w,w')$ is well defined as a locally uniformly converging series
on compact sets in $(m,m')$ away from the diagonal, as long as ${\rm Re}(s)>k/2$.
The last statement about the estimate of the kernel of $\psi R_{X_c}(s)\chi$ is a straightforward consequence of what we have just discussed.
\end{proof}

Now use the inverted coordinate system $(u,v,z)$ and the polar variable $R$, where
\begin{equation}\label{coorduv}
 u:=\frac{x}{x^2+|y|^2}, \quad v:= \frac{-y}{x^2+|y|^2}, \quad R:=\sqrt{u^2+|v|^2}=\frac{1}{\sqrt{x^2+|y|^2}}.
\end{equation} 
These functions too descend to $X_c$, and the cusp itself is at $R=0$, while $\partial \bbar{X}_c$ is 
$\{u=0,v\neq 0\}$. A simple calculation gives that
\[
\frac{1}{\theta(w,w')} = \frac{ u^2 + {u'}^2 + |y-y'|^2 + (RR')^2 |z-z'|^2}{2uu'}.
\]
Therefore, repeating the same arguments of the proof of Proposition \ref{poincareseries1}, we easily obtain that for any $\eps>0$, there exists $C$ with
\begin{equation}\label{estimatewithuv}
| (uu')^{-s}R_{X_c}(s;w,w')| \leq C (RR')^{-k},
\end{equation}
in the region  $\{(u-u')^2+(|v|-|v'|)^2>\eps\}$, 
with the analogous estimate holding if we apply any number of derivatives $\partial_u$,
$\partial_v$ and $\partial_z$. 

We can now use this to estimate the kernel of the resolvent on the entire geometrically finite quotient $X$.

\begin{cor}\label{corRe(s)>k/2}
Let $X = \Gamma \backslash \hh^{n+1}$ be a geometrically finite quotient, and let $\bar{k}$ be the
maximum rank of all the nonmaximal rank cusps. Let $R_X(s)$ be the meromorphically continued resolvent of Theorem \ref{mainth}. If $\chi \in \calC_0^\infty(\bbar{X})$ and $\psi\in \calC^\infty(\bbar{X})$ 
have disjoint support,
then in a cusp neighbourhood $\calU_j^c$ of a cusp of rank $k$, and using the inverted coordinates of
\eqref{coorduv}, we have for ${\rm Re}(s)> \bar{k}/2$
\begin{equation}\label{growthincusp}
\begin{gathered} 
\Big|(uu')^{-s} \psi(w)R_X(s;w,w')\chi(w')\Big|\leq C (u^2+|v|^2)^{-k/2}, \\
\Big|(uu')^{-s}\psi(w) (u\pl_u - s)R_X(s;w,w')\chi(w')\Big|\leq C (u^2+|v|^2)^{-k/2}.
\end{gathered}
\end{equation}
\end{cor}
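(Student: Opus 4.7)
The plan is to reduce the estimate for $R_X(s;w,w')$ in the cusp neighborhood $\calU_j^c$ to the corresponding estimate for the model cusp resolvent $R_{X_c}(s;w,w')$ with $X_c = \Gamma_j \backslash \hh^{n+1}$, and then invoke Proposition \ref{poincareseries1} and the bound \eqref{estimatewithuv} which were already established for the model.

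First I would use the parametrix identity \eqref{formulaext}, $R_X(s)\psi = Q_\infty(s)\psi + Q_\infty(s)\psi S_\infty(s)$, from the proof of Theorem \ref{mainth}. In the cusp region $\calU_j^c$, the parametrix $Q_\infty(s)$ is a sum $\sum_j Q^{r/c}_{\infty,j}(s) + \hat\chi_0 R_X(s_0)\chi_0$ where only the piece $Q^c_{\infty,j}(s)$ --- which is itself an asymptotic sum refining $\hat\chi_j^c R^c_j(s)\chi_j^c$ by solving away Taylor coefficients at $x=0$ --- contributes to the Schwartz kernel in $\calU_j^c \times \calU_j^c$; all other pieces are supported in other regular/cusp neighborhoods or in compact interior regions, and on $\{w,w'\in \mathrm{supp}(\psi\chi)\}$ they contribute terms that are either zero or decay faster than any power of $(u^2+|v|^2)^{-1/2}$ by the support condition. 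Similarly, the composition $Q_\infty(s)\psi S_\infty(s)$ involves the residual $S_\infty(s)$ whose kernel lies in $\rho^\infty \rho'^s \calC^\infty$ with compact support in $\bbar{X}\times\bbar{X}$, and hence contributes $O((uu')^s R^\infty)$ in the cusp coordinates, a faster decay than required.

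Second, for $\mathrm{Re}(s)>\bar{k}/2 \geq k/2$, I would apply Proposition \ref{poincareseries1} to $\hat\chi_j^c R^c_j(s)\chi^c_j$, which converges in the sense of the Poincar\'e series \eqref{average}. Using inverted coordinates $(u,v,z)$ and the identity $1/\theta(w,\gamma w') = (u^2+u'^2+|y-A_\gamma y'|^2 + (RR')^2|z-z'-z_\gamma|^2)/(2uu')$, the arguments leading to \eqref{boundabove} and \eqref{estimatewithuv} yield the bound
\[
|(uu')^{-s}\psi(w)R^c_j(s;w,w')\chi(w')| \leq C(u^2+|v|^2)^{-k/2},
\]
together with the analogous bound on any derivative in $(u,v,z)$. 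Since $\psi,\chi$ have disjoint supports and the `solved-away' terms added in constructing $Q^c_{\infty,j}(s)$ from $\hat\chi_j^c R^c_j(s)\chi^c_j$ lie in $x^{s+2N}(x')^s\calC_0^\infty$, hence are of order $(uu')^s R^{-2N} R'^s$ in the inverted variables, they satisfy the same bound with room to spare.

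For the second estimate involving $u\partial_u - s$, write $H(w,w') := (uu')^{-s}R_X(s;w,w')$; then $(u\partial_u - s)R_X(s;w,w') = (uu')^s (u\partial_u H)$, and the bound follows from the derivative estimate on $H$ derived just as in Proposition \ref{poincareseries1} (the operator $u\partial_u - s$ precisely kills the $u^s$ prefactor and the remaining derivative $u\partial_u H$ satisfies an estimate of the same type). The main obstacle is bookkeeping: one must keep track of the several pieces of the parametrix and verify that neither the error term $S_\infty(s)$ nor the `indicial solve-away' corrections, nor the contributions from other cusps and from the interior resolvent, disturb the sharp exponent $-k/2$; but since all of these terms are either supported away from $\calU_j^c$ or carry extra powers of $x$ (equivalently $u$, i.e.\ of $R$), the dominant contribution is precisely the Poincar\'e series of Proposition \ref{poincareseries1}, which delivers exactly the asserted bound.
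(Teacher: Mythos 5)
Your proposal is correct and follows essentially the same route as the paper: both start from the parametrix identity \eqref{formulaext}, isolate the model cusp resolvent $\hat{\chi}^c_j R^c_j(s)\chi^c_j$ inside $Q_\infty(s)$, bound it via the Poincar\'e-series estimates of \S 7.1 (in particular \eqref{estimatewithuv}) in the inverted coordinates, and dispose of the remaining pieces of $Q_\infty(s)$ and of the residual factor $S_\infty(s)$ by their compact support and extra powers of $\rho$. The only cosmetic difference is that the paper phrases the bound on $\psi R^c_j(s)\chi$ by explicitly splitting into the region $u^2+|v|^2 > \eps$ (handled by Proposition \ref{smoothness}) and $u^2+|v|^2 \leq \eps$ (handled by \eqref{estimatewithuv}); your invocation of Proposition \ref{poincareseries1} and \eqref{estimatewithuv} covers the same ground, and your observation that $(u\pl_u - s)R_X = (uu')^s\, u\pl_u\bigl[(uu')^{-s}R_X\bigr]$ reduces the second estimate to the derivative version of \eqref{estimatewithuv}, exactly as the paper implicitly does.
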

\begin{proof}  
From \eqref{formulaext} we can write $R_X(s)\chi=Q_\infty(s)\chi +Q_\infty(s)\chi S_\infty(s)$ for a suitably
chosen parametrix $Q_\infty(s)$ and with $S_\infty(s)\in \rho^\infty {\rho'}^s \calC_0^\infty(\bbar{X} \times \bbar{X})$. 
By the construction in the proof of Theorem \ref{mainth}, if $\hat{\chi}^c_jR^c_j(s)\chi_j^c$ are the model resolvents 
in $\calU^c_j$, then $L(s) :=Q_\infty(s)-\sum_{j\in J^c}\hat{\chi}^c_jR^c_j(s)\chi^c_j$ has compact support 
in $\bbar{X}\x \bbar{X}$.  Let $\eps>0$ be small so that $\chi(w')$ has support in 
${u'}^2+|v'|^2>2\eps$ in each cusp neighbourhood, we can then combine the result in Proposition \ref{smoothness} 
for the region $u^2+|v|^2>\eps$ far from the cusps
and the estimate \eqref{estimatewithuv} for the region $u^2+|v|^2\leq \eps$ near the cusp
to deduce that each $\psi R^c_j\chi$ satisfies \eqref{growthincusp}, and $\psi L(s)\in \rho^{s+2} {\rho'}^s\calC_0^\infty(\bbar{X} \times
\bbar{X})$ satisfies this same estimate in $\calU^c_j$ as well. Finally, using the residual structure of $S_\infty(s)$,
we obtain the same estimate also for the composition $\psi Q_\infty(s)\chi S_\infty(s)$.
\end{proof}

\subsection{Poincar\'e series}
We now review a standard argument which shows that the meromorphic continuation of the resolvent $R_X(s)$
implies a corresponding extension for the Poincar\'e series of $\Gamma$: 
\[ 
P_s(m,m'):=\sum_{\gamma\in \Gamma\setminus{\rm Id}}e^{-sd(m,\gamma m') }, \quad m,m'\in\hh^{n+1}.
\]  
Here, notice that we have  removed the term in the series corresponding to $\gamma = \mbox{Id}$, 
this obviously does not change any result about meromorphic continuation of $P_s$ and is 
only done for notational simplicity below.
Recall that this sum converges to a holomorphic function in ${\rm Re}(s)>\delta$, where $\delta = \delta(\Gamma) \in (0,n)$
equals the Hausdorff dimension of the limit set of $\Gamma$ (\cite{PatActa,SU}). 
\begin{theo}
The series $P_s(m,m')$ admits a meromorphic continuation to the entire complex plane.
\end{theo}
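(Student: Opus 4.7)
The strategy is the classical one: use the method-of-images identity $R_X(s;m,m') = \sum_{\gamma\in\Gamma} R_{\hh^{n+1}}(s;m,\gamma m')$, valid on a sufficiently large half-plane, combined with an asymptotic expansion of the free-space resolvent at infinity, to express $P_s$ in terms of $R_X(s)$ plus tail corrections, then bootstrap the meromorphic continuation using Theorem \ref{mainth}.

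First I would derive a geometric asymptotic expansion of the free resolvent. Using $\tau(m,m') = 1/\cosh d(m,m') = 2e^{-d}(1+e^{-2d})^{-1}$ in formula \eqref{formulemodel} and expanding both the hypergeometric series and the geometric series $(1+e^{-2d})^{-s}$, one obtains for each $N\in\nn$ and $d = d(m,m')\ge 1$
\[
R_{\hh^{n+1}}(s;m,m') = \sum_{j=0}^{N-1} c_j(s)\,e^{-(s+2j)d(m,m')} + r_N(s;d(m,m')),
\]
where the $c_j(s)$ are meromorphic on $\cc$, $c_0(s) = \Gamma(s)/\bigl(2\pi^{n/2}\Gamma(s-n/2+1)\bigr)$ is not identically zero, and $|r_N(s;d)|\le C(s)\,e^{-({\rm Re}(s)+2N)d}$. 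This is a routine Laurent computation.

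Next I would justify the averaging identity on the common half-plane $\{{\rm Re}(s) > \max(n/2,\delta)\}$: there the series $\sum_\gamma R_{\hh^{n+1}}(s;m,\gamma m')$ converges absolutely (essentially by the definition of $\delta$ as Poincar\'e exponent) and defines a $\Gamma$-invariant kernel solving $(\Delta_X - s(n-s))K = \mathrm{Id}$ in $L^2(X)$, hence equal to $R_X(s)$. Substituting the expansion and separating the finitely many group elements with $d(m,\gamma m') < 1$ (whose contribution is an entire function of $s$) yields, for ${\rm Re}(s) > \delta$,
\[
c_0(s)\,P_s(m,m') = R_X(s;m,m') - \sum_{j=1}^{N-1} c_j(s)\,P_{s+2j}(m,m') - \widetilde{r}_N(s;m,m') - E_N(s;m,m'),
\]
where $\widetilde{r}_N(s;m,m') := \sum_\gamma r_N(s;d(m,\gamma m'))$ is absolutely convergent on $\{{\rm Re}(s) > \delta - 2N\}$ by the tail bound on $r_N$, and $E_N$ is entire in $s$.

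The meromorphic extension of $P_s$ now follows by induction on $k\ge 0$: assuming $P_s$ is already known meromorphic on $\{{\rm Re}(s) > \delta - 2k\}$ (base case $k=0$ being the original convergence), apply the identity with $N = k+1$. On $\{{\rm Re}(s) > \delta - 2(k+1)\}$, the term $R_X(s;m,m')$ is meromorphic by Theorem \ref{mainth}; each $P_{s+2j}$ with $1\le j\le k$ is meromorphic there by the inductive hypothesis applied at argument $s+2j$ (since ${\rm Re}(s+2j) > \delta - 2k$); $\widetilde{r}_{k+1}$ is holomorphic on that half-plane; and dividing by the meromorphic, not-identically-zero factor $c_0(s)$ extends $P_s$ meromorphically. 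Iterating in $k$ exhausts $\cc$.

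The main technical obstacle I foresee is the rigorous justification of the averaging identity at the level of kernels, which in the infinite-volume geometrically finite setting requires some uniform-in-$\gamma$ control on the free resolvent where orbits accumulate near the cusps; however the decay exponent of $r_N$ matches the Poincar\'e exponent $\delta$ exactly, so once the identity is established the inductive bootstrap runs without further difficulty.
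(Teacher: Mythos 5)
Your proposal is correct and follows essentially the same approach as the paper: expand the free resolvent $R_{\hh^{n+1}}(s;m,m')$ in decaying exponentials $e^{-(s+j)d}$ with a controlled remainder, sum over $\Gamma$ to express $P_s$ through $R_X(s)$, shifted Poincar\'e series, and an absolutely convergent tail, then bootstrap half-plane by half-plane. The only cosmetic differences are that the paper writes the expansion in powers $Q^{s+j}$ of $Q=e^{-d}$ (rather than isolating the even shifts $e^{-(s+2j)d}$) and starts the argument on $\{{\rm Re}(s)>n\}$ rather than $\{{\rm Re}(s)>\max(n/2,\delta)\}$; neither affects the structure of the induction.
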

\begin{proof} To simplify exposition, we suppose that $m,m'\in \rm{int}(\mc{F})$ where $\mc{F}$ is a fundamental domain of $\Gamma$. 
Define, for ${\rm Re}(s)>n$, 
\[\widetilde{R}_s(m,m'):= R_X(s;m,m')-R_{\hh^{n+1}}(s;m,m')=\sum_{\gamma\in\Gamma\setminus{\rm Id}} R_{\hh^{n+1}}(s;m,\gamma m').\] 
By Theorem \ref{mainth}, this extends meromorphically to $\cc$, and (by elliptic regularity) is smooth in 
$\rm{int}(\mc{F})\x\rm{int}(\mc{F})$. Now, for any $N\in\nn$, we can write by \eqref{formulemodel}
\[
R_{\hh^{n+1}}(s;m,m')=\sum_{j=0}^Nc_{s,j}Q^{s+j}(m,m')+Q^{s+N+1}(m,m')L_s(Q(m,m'))
\]
where $Q(m,m')=e^{-d(m,m')}$, and the scalar functions $c_{s,j}$ and $L_s\in \calC^\infty([0,1))$ are meromorphic in $\cc$ with 
$c_{s,0}\not\equiv 0$.  Now sum over translates by $\gamma \in \Gamma \setminus {\rm Id}$:
\begin{multline*}
P_s(m,m')  =\\ c_{s,0}^{-1} \Big( \widetilde{R}_s(m,m') -\sum_{j=1}^N c_{s,j} P_{s+j}(m,m')
- \sum_{\gamma\in\Gamma\setminus{\rm Id}} Q^{s+N+1}(m,\gamma m')L_s(Q(m,\gamma m')) \Big),
\end{multline*}
initially at least for ${\rm Re}(s)>n$. The infinite series on the right converges to a meromorphic function in ${\rm Re}(s)>n-N-1$.
Assuming that $P_s(m,m')$ is meromorphic in ${\rm Re}(s)>n-M$ for some $M \geq 0$, then all terms on the right are
meromorphic in ${\rm Re}(s)>n-M-1$, provided $N > M+1$. By induction, this provides the continuation of $P_s(m,m')$ to all of $\cc$. 
\end{proof}

We recall the result proved by Patterson in \cite[Th.1 and 2]{PatArx}. 
\begin{theo}[Patterson]
Let $X=\Gamma\backslash \hh^{n+1}$ be a geometrically finite hyperbolic manifold and $\delta>0$ be the exponent of convergence of 
Poincar\'e series. Assume that the resolvent $R_X(s)$ extends meromorphically to a neighbourhood of $\{{\rm Re}(s)\geq \delta\}$. 
Then $\Gamma(s-n/2+1)R_X(s)$ has a simple pole at $s=\delta$ and no other poles on the line $\{{\rm Re}(s)=\delta\}$, there exists a smooth function $F$ on $X$ such that the residue of 
$\Gamma(s-n/2+1)R_X(s)$ at $\delta$ is the rank $1$ operator $F\otimes F$ and 
\[
\sharp \{\gamma\in \Gamma; d(m,\gamma m')\leq R\}\sim c\,e^{\delta R}F(m)F(m')
\]
as $R \to \infty$, for some $c>0$ depending only on $\Gamma$.
\label{Patterson}
\end{theo}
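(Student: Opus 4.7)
The plan is to deduce the theorem from the meromorphic extension of $R_X(s)$ established in Theorem \ref{mainth} in three steps: transfer of the pole structure from $R_X(s)$ to $P_s(m,m')$, identification of the residue via Patterson-Sullivan theory, and a Tauberian argument for the lattice-counting function. Throughout I would fix $m,m' \in \text{int}(\mc{F})$ in the interior of a fundamental domain, so that elliptic regularity makes all kernels smooth at these points.

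First I would revisit the identity
\[
\widetilde R_s(m,m') := R_X(s;m,m')-R_{\hh^{n+1}}(s;m,m')= \sum_{\gamma\in\Gamma\setminus\{\text{Id}\}} R_{\hh^{n+1}}(s;m,\gamma m'),
\]
valid for ${\rm Re}(s)>n$, and use the large-distance asymptotic $R_{\hh^{n+1}}(s;m,m') = c_s e^{-sd(m,m')}(1+O(e^{-2d(m,m')}))$ extracted from \eqref{formulemodel}, where $c_s = 2^{-s}\pi^{-n/2}\Gamma(s)/\Gamma(s-n/2+1)$. The inductive scheme used in the previous theorem then gives a relation of the form $c_s P_s(m,m') = \widetilde R_s(m,m') + h(s;m,m')$, where $h$ is a meromorphic combination of $P_{s+j}$'s and a series with strictly better domain of convergence. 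This shows that $\Gamma(s-n/2+1)P_s(m,m')$ and $\Gamma(s-n/2+1)R_X(s;m,m')$ have the same singularities on a neighbourhood of $\{{\rm Re}(s) \geq \delta\}$, since the extra $\Gamma(s)$ factor is non-vanishing there.

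Next I would identify the residue at $s=\delta$. For real $s>\delta$ the kernel $R_X(s;m,m')$ is positive and symmetric. At a pole $s_0$ with ${\rm Re}(s_0)=\delta$, Theorem \ref{mainth} forces the residue to be a finite-rank operator; a Perron-Frobenius / positivity argument on the principal eigenvector (applied to $R_X(s)$ as $s\downarrow \delta$ along the reals) shows that $s_0$ must be real, hence equal to $\delta$, that the pole is simple, and that the residue is a non-negative rank-one self-adjoint operator of the form $c\, F\otimes F$ with $F>0$ smooth and $(\Delta_X-\delta(n-\delta))F=0$. Lifting to $\hh^{n+1}$ yields a positive $\Gamma$-automorphic $\Delta$-eigenfunction of polynomial growth; by Sullivan's uniqueness theorem such a function is unique up to scalar, and Patterson's explicit construction gives $\tilde F(m)=c'\int_{S^n} P(m,\zeta)^\delta d\mu_\delta(\zeta)$ for the Patterson-Sullivan conformal measure $\mu_\delta$. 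Combined with step one, this establishes $\Gamma(s-n/2+1)P_s(m,m') = c F(m)F(m')/(s-\delta)+(\text{holomorphic near } {\rm Re}(s)=\delta)$.

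Finally I would view $P_s(m,m') = \int_0^\infty e^{-sR}\, dN_{m,m'}(R)$ as the Laplace--Stieltjes transform of the monotone counting function $N_{m,m'}(R)= \sharp\{\gamma\in\Gamma: d(m,\gamma m')\leq R\}$, and apply the Wiener-Ikehara Tauberian theorem: a simple pole of $P_s$ at $s=\delta$ with residue $c F(m)F(m')$, together with continuity of $P_s - c F(m)F(m')/(s-\delta)$ up to the line $\{{\rm Re}(s)=\delta\}$, yields $N_{m,m'}(R)\sim (c/\delta)\,e^{\delta R} F(m)F(m')$. The main obstacle is the middle step, specifically the rank-one and reality assertions for the residue and the exclusion of other poles on $\{{\rm Re}(s)=\delta\}$: when $\delta < n/2$ the pole does not correspond to an $L^2$-eigenvalue of $\Delta_X$, so rank and positivity must be extracted from the structure of $R_X(s)$ on the real axis rather than from classical spectral theory, and the matching with the explicit integral against $\mu_\delta$ really requires the full strength of Patterson-Sullivan theory, including care about the behaviour of $\mu_\delta$ at irrational cusp points.
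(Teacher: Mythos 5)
Before anything else, note that the paper does \emph{not} supply its own proof of this theorem: it is stated as ``Theorem (Patterson)'' precisely because it is being quoted from Patterson's paper \cite{PatArx}, and the surrounding text explicitly says only that the authors' Theorem \ref{mainth} verifies the hypothesis so that Patterson's result can be invoked. The paper even remarks that ``in the proof of this result by Patterson, there is a fundamental argument from ergodic theory to show that there is only one pole on the vertical line $\{\mathrm{Re}(s)=\delta\}$.'' So there is no internal proof in the paper to compare against line by line.

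Your outline is a faithful reconstruction of the overall logic of Patterson's argument: transferring the singularity at $s=\delta$ between $R_X(s)$ and $P_s$ via the hyperbolic-distance asymptotics of \eqref{formulemodel}; identifying the residue with a rank-one tensor product of the $\Gamma$-automorphic Patterson--Sullivan eigenfunction $F(m)=c'\int P(m,\zeta)^\delta\,d\mu_\delta(\zeta)$; and then a Wiener--Ikehara type Tauberian theorem applied to $P_s=\int_0^\infty e^{-sR}\,dN_{m,m'}(R)$. The step $P_s\mapsto N_{m,m'}(R)$ is fine as you state it, with the usual $1/\delta$ factor appearing when converting the residue of the Laplace--Stieltjes transform to the constant in the asymptotic.

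The issue — and you flag it yourself, which is to your credit — is the middle step. Your appeal to a ``Perron--Frobenius / positivity argument'' to conclude that $s=\delta$ is the unique pole on the critical line, simple, with a rank-one residue, is too quick. When $\delta>n/2$ this is indeed a classical bottom-of-the-spectrum argument (the $L^2$ ground state is simple and positive), but the theorem is most interesting exactly when $\delta\le n/2$, where $\delta(n-\delta)$ lies on or inside the continuous spectrum, the pole is a resonance rather than an eigenvalue, and there is no self-adjoint spectral theory giving reality, simplicity, or rank one for free. Ruling out poles at $\delta+i\tau$, $\tau\neq0$, is precisely where Patterson uses an ergodic-theoretic argument (roughly, mixing of the geodesic flow on the non-wandering set), and Sullivan's uniqueness statement is about $\delta$-conformal densities rather than directly about eigenfunctions, so there is real work to do to translate. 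As a proposal it is therefore structurally sound but relies, at the crucial point, on citing the same external results the paper itself defers to; it does not close that gap.
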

\noindent Theorems \ref{mainth} and \ref{Patterson} imply Corollary \ref{cor1}. In \cite{PatArx} the function $F$ is given (up to a positive 
multiplicative constant) by $F(m)=\int_{S^n} P(m,\zeta)^\delta d\mu_{\delta}(\zeta)$
where $\mu_\delta$ is a probability measure supported on the limit set $\Lambda(\Gamma)$ of $\Gamma$ (the so-called \emph{Patterson-Sullivan measure}) and $P(m,\zeta)$ is the usual Poisson kernel on the ball. In the proof of this result by Patterson, there is a fundamental argument from ergodic theory to show that there is only one pole on the vertical line $\{{\rm Re}(s)=\delta\}$. 
Notice that only when $s\notin n/2-\nn$ the point $s=\delta$ is a resonance and  the function $F$ is then the resonant state associated to $\delta$, see e.g. \cite{GuiNau} for a discussion in the convex co-compact setting.

\subsection{Mapping properties of the resolvent} 
We now come back to the description of the resolvent on a cusp neighbourhood using Fourier decomposition,
as in \S 3, to obtain finer mapping properties of $R_X(s)$ all the way up to the critical line, i.e. in the
closed half-plane $\{{\rm Re}(s)\geq n/2\}$. This is necessary in order to analyze the scattering operator 
and prove that it satisfies a functional equation. 

Let $X_c = \Gamma_\infty \backslash \hh^{n+1} = \rr^+ \times F$ be a cusp of rank $k$. We use standard coordinates $(x,y,z)$
as before, and the Sobolev spaces $H^\ell(F)$, $H^\infty(F)$, and $H^\infty(X_c) = \calC^\infty_b([0,\infty); H^\infty(F))$, as
defined at the end of \S 2. We also use the variable $\rho = x/(1+x)$, and set
\[\dot{H}^\infty(X_c):=\{u\in H^\infty(X_c): u=\calO( (x/(1+x^2))^\infty )\}.\]
\begin{lem}\label{borneA>0} 
Let ${\rm Re}(s-\ndemi)\geq 0$ and $f\in \dot{H}^\infty(X_c)$; then $R_{X_c}(s)f \in \rho^{s-\ndemi}H^\infty(X_c)$.
\end{lem}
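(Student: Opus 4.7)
The plan is to use the explicit formula \eqref{resolvante} for the resolvent kernel together with the Taylor expansion of the modified Bessel function $I_{s-\ndemi}$ at $0$, which produces the factor $x^{s-\ndemi}$, and the rapid vanishing of $f$ at $x = 0$, which restricts the relevant part of the integrand to the branch $x < x'$ of the piecewise kernel $F_{s,x,x'}$.

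First I would decompose $f \in \dot{H}^\infty(X_c)$ along the Fourier--Bessel basis of \S \ref{fourierdec}, writing $f = \sum_I f_I(x,r)\phi_I(z,\omega)$, with each $f_I$ smooth, vanishing to infinite order at $x=0$, and rapidly decreasing at $x=\infty$. Using $R_{X_c}(s) = x^{\ndemi} R(s) x^{-\ndemi}$ together with \eqref{resolvante}, for $x$ near $0$ only the branch $I_{s-\ndemi}(x\sqrt{\Delta_F}) K_{s-\ndemi}(x'\sqrt{\Delta_F})$ of the kernel contributes (the complementary branch produces an $\mc{O}(x^\infty)$ error thanks to the vanishing of $f$ at $x=0$). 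Inserting the entire series
\[
I_{s-\ndemi}(u) = (u/2)^{s-\ndemi}\sum_{\ell\geq 0}\frac{(u/2)^{2\ell}}{\ell!\,\Gamma(s-\ndemi+\ell+1)}
\]
factors a power $x^{s-\ndemi}$ out of the kernel and yields $R_{X_c}(s)f(x,\cdot) = x^{s-\ndemi} U_s(x,\cdot)$, where $U_s(x,\cdot) = 2^{\ndemi-s}\,x^{\ndemi} \sum_{\ell\geq 0}\frac{x^{2\ell}}{4^\ell\ell!\,\Gamma(s-\ndemi+\ell+1)} \Delta_F^\ell G_s(\cdot)$ with $G_s := \Delta_F^{(s-\ndemi)/2}\int_0^\infty K_{s-\ndemi}(x'\sqrt{\Delta_F})(x')^{-\ndemi}f(x',\cdot)\frac{dx'}{x'}$, both interpreted via the spectral calculus of $\Delta_F$.

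The next step is to verify that $U_s \in H^\infty(X_c)$. The series in $x^2$ has infinite radius of convergence since $I_{s-\ndemi}$ is entire, so it suffices to analyse it term by term. That $G_s$, all $\Delta_F^\ell G_s$, and their $y,z$-derivatives lie in $H^\infty(F)$ uniformly in $x$ follows from the bounds \eqref{ki} on $K_{s-\ndemi}$, Plancherel on $F$, and the rapid decay of all $\Delta_F$, $\pl_y$, $\pl_z$-derivatives of $f$ (each of which still lies in $\dot{H}^\infty(X_c)$). For $x$ bounded away from $0$, smoothness and boundedness follow directly from Proposition \ref{smoothness} and from the rapid decay of $f$ at $x=\infty$.

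The main obstacle lies on the critical line $\{\mathrm{Re}(s)=\ndemi\}$, particularly in the irrational cusp setting where the indices $b_I$ accumulate at $0$. In that regime $K_{s-\ndemi}(x'\sqrt{t^2+b_I^2})$ carries a logarithmic or power singularity as $\sqrt{t^2+b_I^2}\to 0$, which threatens to render the spectral integrals defining the $I$-components of $G_s$ divergent. This is resolved exactly as in the proof of Proposition \ref{firstcase}: by \eqref{spmeas} and \eqref{besselvanish}, the spectral measure $dE_I(t)$ vanishes as $t^{2m+n-k-2}$ at $t=0$, which dominates any mild singularity of $K_{s-\ndemi}$ and gives absolute convergence uniformly in $I$ once the spherical degree $m$ is large enough. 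The remaining finitely many low-$m$ indices with $b_I>0$ are handled by direct uniform bounds on compact pieces of the parameter space, while those with $I \in \mc{I}_0$ (where $b_I=0$) are controlled by the unitary reduction to $R_{\hh^{n-k+1}}(s-k/2)$ from the end of the proof of Proposition \ref{firstcase}.
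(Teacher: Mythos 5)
Your proposal takes a genuinely different route from the paper, and it has gaps that prevent it from closing cleanly.

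The paper does \emph{not} expand $I_{s-n/2}$ in its power series and treat the sum term by term. Instead it uses the uniform estimate $|\partial_t^\alpha(t^{-\lambda}I_\lambda(t))|\leq C_\alpha e^t$ valid for all $t>0$ when ${\rm Re}(\lambda)\geq 0$ (this is \eqref{estimbesselIK}), combines it with the decay of $K_\lambda$ on $[1,\infty)$ and its mild $t^{-{\rm Re}(\lambda)}$ behaviour on $(0,1]$, and obtains the bound \eqref{caseA>0}, namely
\[
\Big|\partial_x^{\beta_1}\big(\rho^{-\lambda}I_\lambda(x\sqrt{|\xi|^2+b_I^2})\big)\,\partial_{x'}^{\beta_2}K_\lambda(x'\sqrt{|\xi|^2+b_I^2})\Big|
\leq C\,\langle|\xi|+b_I\rangle^{A+\beta}\,(\rho')^{-A-\beta}, \qquad x\leq x',
\]
\emph{uniformly in the index} $I$. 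The polynomial growth in $|\xi|+b_I$ is absorbed by the $H^\infty$ regularity of $f$, and the negative power of $\rho'$ by the infinite-order vanishing of $f$ at $x'=0$. Cauchy--Schwarz then gives the $L^2(F)$ bound \eqref{L2estimate} uniformly in $x$. Crucially, nothing in this estimate degenerates as $b_I\to 0$; the irrational-cusp phenomenon, the vanishing $t^{2m+n-k-2}$ of $dE_I(t)$, and the $R_{\hh^{n-k+1}}$ reduction for $\mathcal I_0$ are ingredients of Proposition \ref{firstcase} (needed to cross into ${\rm Re}(s)<n/2$); they play no role in the half-plane ${\rm Re}(s)\geq n/2$, and invoking them here is a red herring.

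Your term-by-term series approach has a real convergence issue which you do not address. The partial kernel $\Delta_F^\ell\, K_\lambda(x'\sqrt{\Delta_F})$ acting on the $\xi$-side is bounded by $\tau^{2\ell}e^{-x'\tau}$ for $\tau=\sqrt{|\xi|^2+b_I^2}$ large, whose supremum over $\tau$ is of order $(2\ell/x')^{2\ell}e^{-2\ell}$. Multiplying by the series coefficient $\frac{(x^2/4)^\ell}{\ell!\,\Gamma(\lambda+\ell+1)}$ gives a summand of size roughly $\frac{1}{\ell}(x/x')^{2\ell}$. This converges only for $x<x'$, is \emph{not} absolutely summable uniformly up to $x'=x$, and becomes a logarithmic singularity $\sim-\log(1-(x/x')^2)$ near the diagonal. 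So the claim that ``the series in $x^2$ has infinite radius of convergence, so it suffices to analyse it term by term'' is not justified: the scalar series is entire, but the operator series applied to $G_s$ is not uniformly dominated, and the interchange of the $\ell$-sum, the $x'$-integral and the spectral integral needs a separate argument which your proposal does not supply. The paper sidesteps this entirely by bounding $t^{-\lambda}I_\lambda(t)$ as a whole, which already encodes the $e^{x\tau}e^{-x'\tau}=e^{-(x'-x)\tau}$ cancellation between $I_\lambda$ and $K_\lambda$. A secondary inaccuracy: your $G_s$ is written as $\int_0^\infty$ even though the relevant branch is $\int_x^\infty$; this can be repaired by noting the $\int_0^x$ correction is $\calO(x^\infty)$ via the vanishing of $f$, but it should be stated. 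If you want to keep the series expansion as a device, you would have to truncate it at finite order $N$ and estimate the remainder using the integral form of Taylor's theorem for $I_\lambda$ together with the bound $|t^{-\lambda}I_\lambda(t)|\lesssim e^t$ — at which point you have reproduced the paper's argument.
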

\begin{proof} Using the Fourier decomposition of Section \ref{fourierdec}, any $f \in H^\infty(X_c)$ decomposes
as $f = \sum_I f_I(x,r) \phi_I(z)$, $\phi_I(z) := \exp (2\pi i \langle z, v^* + A_{mp}\rangle)$, 
where each $I = (m,p,v^*)$, and $f_I(x,r) \in \Pi_{mp}(L^2(SF))$ for every $x$ and $r$. 

If $f\in L^2(X_c)$, then $R(s)f=\sum_I (R_I(s) f_I)(x,r) \phi_I(z,\omega)=\sum_I u_I(x,r) \phi_I(z,\omega)$, where
\begin{equation}\label{formulaFI}
\begin{split}
u_I(x,r)=& \int_x^\infty I_\la(x\sqrt{\Delta_I}) K_\la(x'\sqrt{\Delta_I})f_I(x',\cdot)\frac{dx'}{x'}+  \int_0^x K_\la(x\sqrt{\Delta_I}) I_\la(x'\sqrt{\Delta_I})f_I(x',\cdot)\frac{dx'}{x'},
\end{split}
\end{equation}
and we have set $\la :=s-n/2$. 
This can be rewritten using the Fourier transform $\mc{F}$ in $y=r\omega$, with dual variable $\xi$, as
\begin{equation}\label{fourierversion}
\begin{split}
 u_I(x,y)=&\Pi_{mp}\int_x^\infty \int_{\rr^{n-k}}e^{iy.\xi}I_\la(x\sqrt{|\xi|^2+b_I^2}) K_\la(x'\sqrt{|\xi|^2+b_I^2})\mc{F}(f_I)(x',\xi)d\xi
\frac{dx'}{x'}\\
&+  \Pi_{mp}\int_0^x  \int_{\rr^{n-k}}e^{iy.\xi} K_\la(x\sqrt{|\xi|^2+b_I^2}) I_\la(x'\sqrt{|\xi|^2+b_I^2})\mc{F}(f_I)(x',\xi)d\xi \frac{dx'}{x'}\\
=& u_I^< + u_I^>.
\end{split}\end{equation}
Integrating by parts yields
\begin{equation}\label{intbyparts}
\begin{gathered} 
\pl_y^{\alpha} (u_I^<)(x,y)=\\  \int_x^\infty \int_{\rr^{n-k}}e^{iy.\xi}\Big(I_\la(x\sqrt{|\xi|^2+b_I^2}) K_\la(x'\sqrt{|\xi|^2+b_I^2})
 \mc{F}((-\pl_y)^{\alpha}f_I)(x',\xi) \Big)d\xi \frac{dx'}{x'},
\end{gathered}
\end{equation}
with a corresponding identity for $u_I^>$. 

To obtain $L^2$ bounds in $y$ of $\pl_y^{\alpha} \pl_x^\beta(\rho^{-\la}u_I^<)(x,y)$, we must bound 
\[ 
\pl_x^{\beta_1}\Big(\rho^{-\la}I_\la(x\sqrt{|\xi|^2+b_I^2}))\Big) \pl_{x'}^{\beta_2} K_\la(x'\sqrt{|\xi|^2+b_I^2}),  \quad \beta_1+\beta_2\leq \beta
\]
when $x\leq x'$. We use the estimates on Bessel functions:
\begin{equation}\label{estimbesselIK}
| \pl_t^\alpha K_\la(t)|\leq \left\{
\begin{array}{ll}
C_\alpha  e^{-t} & \textrm{ if } t>1\\
C_\alpha t^{-|{\rm Re}(\la)|-\alpha} & \textrm{ if }t\leq 1
\end{array}\right., 
\quad | \pl_t^\alpha (t^{-\la}I_\la(t))|\leq  C_\alpha e^{t},
\end{equation}
valid for $t \in\rr^+$ and ${\rm Re}(\la)= A\geq 0$:  
thus when $x\leq x'$, we have
\begin{equation}\label{caseA>0}
\begin{gathered}
\Big|\pl_x^{\beta_1}\Big(\rho^{-\la}I_\la(x\sqrt{|\xi|^2+b_I^2})\Big) \pl_{x'}^{\beta_2} K_\la(x'\sqrt{|\xi|^2+b_I^2})\Big|
\leq C\cjg|\xi|+b_I\cjd^{A+\beta} \rho'^{-A-\beta},
\end{gathered}
\end{equation}
where $C$ depends on $A$ and $\beta$. Using \eqref{intbyparts} and \eqref{caseA>0}. 
Using Cauchy-Schwarz in the $x'$ integral, we immediately obtain the bound
\begin{equation}\label{L2estimate}
\begin{gathered}
||\pl_y^{\alpha} \pl_x^\beta(\rho^{-\la}u_I^<)(x,y)||_{L^2(dy)}\leq C \max_{\beta'\leq \beta}
\Big|\Big|  \frac{\cjg x\cjd^{A+\eps}}{\rho^{A+\beta+\eps}} (\Delta_F + 1)^{\frac{A+\beta}{2}}
\pl_x^{\beta'}\pl_y^\alpha f_I\Big|\Big|_{L^\infty_xL^2_F}  
\end{gathered}
\end{equation}
 when 
$\eps>0$, and ${\rm Re}(\la)=A\geq 0$; here $C$ now depends on $K,\beta$ and $A$, and we have set 
$L^\infty_xL^2_F:=L^\infty(\rr^+;L^2(F,dv_F))$. 
 
The estimates are similar for $u_I^>$, so we omit the details.
\end{proof}
The corresponding fact for the resolvent of $\Delta_X$ is a direct consequence:
\begin{cor}\label{propertyR(s)}
Let $X=\Gamma\backslash \hh^{n+1}$ be a geometrically finite quotient, and let ${\rm Re}(s-\ndemi)\geq 0$ and $f\in \dot{H}^\infty(X)$;
then $R_{X}(s)f \in \rho^{s-\ndemi}H^\infty(X)$.
\end{cor}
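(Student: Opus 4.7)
The corollary follows fairly directly from Lemma \ref{borneA>0} combined with the parametrix construction of Theorem \ref{mainth}: in each cusp the resolvent $R_X(s)$ agrees with the model cusp resolvent modulo regularizing error terms, and in the regular regions one uses the kernel structure \eqref{outsidediag} of the hyperbolic resolvent.

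First I would split $f$ via cusp cutoffs: write $f = f_0 + \sum_{j \in J^c} f_j^c$ with $f_j^c := \chi_j^c f$ supported in a cusp neighbourhood $\calU_j^c$, where $\chi_j^c \in \calC^\infty(\bbar{X})$ equals $1$ in the deep part of the cusp. Each $f_j^c$ belongs to $\dot{H}^\infty(X_c)$ via the isometry $\calU_j^c \cong \Gamma_j \backslash C_\infty(R_j)$. For each such piece, I use the resolvent identity with a second cutoff $\phi_j$ equal to $1$ on $\mbox{supp}(f_j^c)$ and supported in the isometric region, so that $\Delta_X = \Delta_{X_c}$ on $\mbox{supp}(\phi_j)$ and hence
\[ R_X(s) f_j^c = \phi_j R_{X_c}(s) f_j^c - R_X(s)\bigl([\Delta_X, \phi_j] R_{X_c}(s) f_j^c\bigr). \]
By Lemma \ref{borneA>0}, the first summand lies in $\rho^{s-n/2} H^\infty(X)$ after extension by zero, and the commutator term is a smooth function in $\dot{H}^\infty(X)$ supported where $\nabla\phi_j \neq 0$ (a region of bounded $x$ in the cusp) with rapid fibre decay inherited from $H^\infty(F_j)$.

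The remaining task is to show $R_X(s) h \in \rho^{s-n/2} H^\infty(X)$ for $h \in \dot{H}^\infty(X)$ supported where $x$ is bounded in every cusp; this covers $f_0$ and the commutator contributions above. For such $h$, apply formula \eqref{formulaext}: for a cutoff $\psi \in \calC^\infty_0(\bbar{X})$ equal to $1$ on the support of $h$, $R_X(s) h = Q_\infty(s)\psi h + Q_\infty(s)\psi S_\infty(s) h$. The parametrix $Q_\infty(s)$ comprises (i) an interior correction with smooth, compactly supported output; (ii) model half-ball resolvents whose kernels lie in $(xx')^s \calC^\infty$ away from the diagonal by \eqref{outsidediag}, mapping $\dot{H}^\infty$ into $x^s H^\infty \subset \rho^{s-n/2} H^\infty$; (iii) model cusp resolvents $\hat{\chi}_j^c R_j^c(s) \chi_j^c$, to which Lemma \ref{borneA>0} applies; and (iv) indicial Taylor corrections of the form $x^{s+2\ell} G$ smooth up to the boundary and directly in $\rho^{s-n/2} H^\infty$. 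The residual $S_\infty(s) h$ lies in $\rho^\infty \calC^\infty(\bbar{X})$ with compact support in $\bbar{X}$ and is handled identically.

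The main technical obstacle is the non-compactness of the fibre directions of $\bbar{X}$ at the cusps, which prevents a direct application of the compactly supported parametrix formula to the full $h$. This is resolved by truncating $h = \omega h + (1-\omega) h$ in the fibre with $\omega$ a smooth cutoff equal to $1$ on a large ball in the fibre, applying the above analysis to $\omega h$, and using uniform estimates on $Q_\infty(s)$ — ultimately stemming from the explicit Bessel function bounds in the proof of Lemma \ref{borneA>0} — to control the tail as $\omega \to 1$.
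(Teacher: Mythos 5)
The overall strategy here — combine Lemma \ref{borneA>0} with the parametrix formula \eqref{formulaext} — is exactly the paper's strategy, but your variant via the second resolvent identity introduces a genuine gap in the treatment of the commutator term.

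You claim that the commutator $[\Delta_X, \phi_j]R_{X_c}(s)f_j^c$ lies in $\dot{H}^\infty(X)$. This is false. The support of $\nabla\phi_j$ is an annular region $\{R_1\leq x^2+|y|^2\leq R_2\}$ in the model coordinates, and this region necessarily meets $\{x=0\}$: the frontier between $\calU_j^c$ and the rest of $\bbar X$ is a hypersphere in $(x,y)$ which intersects the boundary. Now by Lemma \ref{borneA>0}, $R_{X_c}(s)f_j^c\in\rho^{s-n/2}H^\infty(X_c)$, which means it behaves like $x^{s-n/2}$ as $x\to 0$ but certainly does \emph{not} vanish to infinite order there. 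Combining this with the structure of the commutator in \eqref{commutator}, $[\Delta_X,\phi_j]=x^2(a+b\pl_x+[\Delta_y,\phi_j])$, one gets leading behaviour $\sim x^{s-n/2+1}$ as $x\to 0$ (the $x^2 b\,\pl_x$ piece dominates). Membership in $\dot{H}^\infty(X)$ requires $\calO(x^\infty)$, which this is not. So you cannot absorb the commutator into the term you handle via $\dot{H}^\infty(X)$, and applying the corollary's conclusion to it would in any case be circular since that is the very statement being proved.

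The way out — and this is what the paper's reference to Corollary \ref{corRe(s)>k/2} is pointing to — is to observe that the commutator \emph{is} compactly supported in $\bbar X$ and lies in $\rho^{s-n/2+1}\calC_0^\infty(\bbar X)$, then feed it to \eqref{formulaext} and read off the $\rho^{s-n/2}H^\infty$ behaviour of the output directly from the structure of $Q_\infty(s)$ (model cusp resolvents plus a compactly supported $L(s)$, as in the proof of Corollary \ref{corRe(s)>k/2}, together with Proposition \ref{smoothness}). This is a separate mapping property of the parametrix that must be verified, not a consequence of the corollary itself. Similarly, your final fibre-truncation step is left at the level of a heuristic: asserting ``uniform estimates on $Q_\infty(s)$'' without identifying where they come from is not enough, and in fact avoiding the truncation by working directly with $Q_\infty(s)$ acting on $H^\infty(F_j)$-valued data (which is what the paper does implicitly through Lemma \ref{borneA>0}) is cleaner.
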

\begin{proof} The proof follows from Lemma \ref{caseA>0} and the parametrix construction in \eqref{formulaext}, just as 
in the proof of Corollary \ref{corRe(s)>k/2}. The crucial fact is that $R_X(s)$ equals $R^c_{j}(s)$ in the cusp neighbourhood
$\calU^j_c$, up to very residual terms.
\end{proof}

\section{Scattering theory}
Using the estimates and various properties of the resolvent we have obtained above, we now construct the 
Eisenstein (or Poisson) and scattering operators. The scheme is the same as for convex co-compact hyperbolic quotients \cite{PP,GRZ} and for quotients with rational cusps \cite{GuiCubo}. 

\subsection{The Poisson operator for a pure parabolic group}
The Poisson operator for a boundary problem, including the asymptotic one considered here, is the mapping
which carries the (asymptotic) boundary value to the solution of the equation in question in the interior 
which has this boundary value. For hyperbolic manifolds, the Schwartz kernel of this operator can be identified with the
Eisenstein series for the group, hence in this setting the Poisson operator is sometimes also called the Eisenstein operator.

Let $X_c=\Gamma_\infty\backslash \hh^{n+1}$ with $\Gamma_\infty$ an elementary discrete parabolic group of rank $k < n$ fixing $\infty$. 
The Poisson operator for this space, $P^c(s)$, is defined by
\begin{equation}\label{defP^c} 
P^c(s)f(x,y,z) =  \frac{2^{-\la+1}}{\Gamma(\la)}x^\ndemi \Delta_{F}^{\frac{\la}{2}}K_\la (x\sqrt{\Delta_{F}})f(y,z) , \qquad \la=s-n/2.
\end{equation}
This evidently satisfies $(\Delta_{X_c}-s(n-s))P^c(s)f=0$. 

\begin{prop}\label{eisensteincusp}
Let $\la=s-n/2$ and $\rho=x/(1+x)$. The operator 
\begin{equation} \label{stat1}
P^c(s): H^\infty(F) \to \rho^{s} H^\infty(X_c)+\rho^{n-s}  H^\infty(X_c)
\end{equation}
is a holomorphic family of bounded operators for $\{{\rm Re}(\la)\geq 0; \la\notin \nn\}$ and 
for all $\beta\in\nn_0, N\geq 0$, there is $C>0$ such that 
$||\pl_x^\beta (P^c(s)f)(x,\cdot)||_{H^{2N}(F)}\leq C x^{\ndemi-{\rm Re}(\la)}||f||_{L^2}$ for $x>1$.
If $f \in H^\infty(F)$, there exist $F^\pm\in H^\infty(X_c)$ such that $P^c(s)f= \rho^{n-s}F^- +\rho^{s}F^+$ and
\[ 
F^-|_{x=0}=f, \quad F^+|_{x=0}= 2^{-2\la}\frac{\Gamma(-\la)}{\Gamma(\la)}\Delta_F^\la f.
\] 
\end{prop}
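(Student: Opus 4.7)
The plan is to exploit the Fourier decomposition of \S \ref{fourierdec}: writing $f = \sum_I f_I(y,z)\phi_I$ under the spectral decomposition of $\Delta_F$, we obtain $P^c(s)f = \sum_I P^c_I(s) f_I\, \phi_I$, where each $P^c_I(s)$ is a scalar operator on $\rr^+_x$ obtained by replacing $\Delta_F$ by its eigenvalue on the $I$th component. The defining equation $(\Delta_{X_c}-s(n-s))P^c(s)f = 0$ is immediate from the Bessel equation satisfied by $K_\la(x\sqrt{\Delta_I})$, so everything else reduces to uniform-in-$I$ control of $P^c_I(s)$ via two standard ingredients: the exponential decay $|\pl_t^\beta K_\la(t)|\leq C_\beta e^{-t}$ for $t\geq 1$ and the absolutely convergent power series of $K_\la$ near $t=0$.

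For the mapping property \eqref{stat1} and the $H^{2N}(F)$ bound for $x>1$, I would argue as in the proof of Lemma \ref{borneA>0}: using \eqref{estimbesselIK}, the operator $\pl_x^\beta\,\Delta_F^{\la/2} K_\la(x\sqrt{\Delta_F})$ factors as a Schwartz function of $x\sqrt{\Delta_F}$ combined with a fractional power of $\Delta_F$, giving an operator norm $\mathcal{O}(x^{-{\rm Re}(\la)-\beta})$ on $L^2(F)$ that trades exponential decay for gain of Sobolev regularity. Multiplying by the prefactor $x^{n/2}$ yields the asserted $x^{n/2-{\rm Re}(\la)}\|f\|_{L^2}$ bound, valid for all $x>1$. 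Holomorphy in $\la$ on $\{{\rm Re}(\la)\geq 0\}\setminus \nn$ is inherited from that of $K_\la(t)/\Gamma(\la)$ together with these uniform estimates.

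The asymptotic expansion as $x\to 0$ is the core of the statement. For $\la\notin \nn$ the identity
\begin{equation*}
K_\la(t) = \frac{\pi}{2\sin(\pi\la)}\bigl(I_{-\la}(t) - I_\la(t)\bigr),\qquad I_\nu(t) = \sum_{k=0}^\infty \frac{(t/2)^{\nu+2k}}{k!\,\Gamma(\nu+k+1)},
\end{equation*}
expresses $\Delta_F^{\la/2}K_\la(x\sqrt{\Delta_F})$ as the sum of two absolutely convergent series in $x^{-\la+2k}\Delta_F^k$ and $x^{\la+2k}\Delta_F^{\la+k}$. Multiplying by $\tfrac{2^{-\la+1}}{\Gamma(\la)} x^{n/2}$ and using the reflection formula $\Gamma(\la)\Gamma(1-\la)\sin(\pi\la)=\pi$, the leading two terms give exactly $x^{n-s}f + 2^{-2\la}\tfrac{\Gamma(-\la)}{\Gamma(\la)}x^s\Delta_F^\la f$, while the higher orders have the form $x^{n-s+2k}a_k(\la) \Delta_F^k f + x^{s+2k}b_k(\la)\Delta_F^{\la+k} f$ with $a_k, b_k$ holomorphic in $\la$. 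Borel summing these two formal series near $x=0$ and splicing with a cutoff to absorb the exponentially decaying tail from $x\geq 1$ then yields $F^\pm \in H^\infty(X_c)$ with $P^c(s)f = \rho^{n-s}F^- + \rho^s F^+$ and the stated boundary values.

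The main obstacle will be uniformity over the index set $\calI$, especially the indices $I\in \calI_0$ of \S 5 for which $b_I=0$ and the spectrum of $\Delta_I$ accumulates at $0$. As in Proposition \ref{firstcase}, this is handled by the fact that ${\rm Re}(\la)\geq 0$ forces $t\mapsto t^\la$ to be bounded on $[0,b]$, so that $\Delta_F^\la$ defines a bounded operator on each $H^N(F)$ and in particular $\Delta_F^\la f \in H^\infty(F)$. The hypothesis $\la\notin \nn$ is genuinely required: at positive integer $\la$ the two series for $I_{\pm\la}$ coincide up to sign and $K_\la$ acquires a logarithmic term, while the coefficient $\Gamma(-\la)$ of $F^+|_{x=0}$ develops a pole, signalling that at those exceptional points the $x^s$ and $x^{n-s}$ asymptotic terms are no longer separable in the form claimed.
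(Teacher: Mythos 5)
Your core mechanism coincides with the paper's: use the small-argument expansion $K_\la = \tfrac{\pi}{2\sin\pi\la}(I_{-\la}-I_\la)$ to read off the $x^{n-s}$ and $x^s$ asymptotics, use the exponential decay of $K_\la$ for large argument, and use the holomorphy of $K_\la$ in $\la$. However, you introduce two layers of machinery that the paper avoids, and one of your reductions has an imprecision worth flagging.

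First, the extra Fourier decomposition over $\calI$ is unnecessary here. For the \emph{resolvent} of the cusp one needs the full decomposition because the kernel has two arguments and one must control $b_I\to 0$ while inverting; for the \emph{Poisson operator}, $P^c(s)=\tfrac{2^{1-\la}}{\Gamma(\la)}x^{n/2}\Delta_F^{\la/2}K_\la(x\sqrt{\Delta_F})$ is a single-variable spectral multiplier in $\Delta_F$, so the paper treats $\Delta_F$ directly by functional calculus and never goes to the $\Delta_I$ components. Your concern about $\calI_0$ and accumulation of $b_I$ at $0$ is then a non-issue: there is no inversion or division by $b_I$ here, and for ${\rm Re}(\la)\geq 0$ the function $\mu\mapsto\mu^{\la/2}K_\la(x\sqrt\mu)$ is uniformly bounded on $[0,\infty)$. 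Second, Borel summation is also not needed. The identity $\tfrac{2^{1-\la}}{\Gamma(\la)}K_\la(t)=t^{-\la}G_\la^-(t)+t^\la G_\la^+(t)$ holds \emph{exactly} (not just asymptotically), with $G_\la^\pm$ entire, smooth in $t^2$, $G_\la^-(0)=1$, $G_\la^+(0)=2^{-2\la}\Gamma(-\la)/\Gamma(\la)$, and one simply applies a cutoff $\chi$ in the \emph{spectral} variable $x\sqrt{\Delta_F}$ to separate the regime where $G_\la^\pm$ is controlled from the regime where $K_\la$ decays. This is where your "splicing with a cutoff to absorb the tail from $x\geq 1$" is imprecise: because ${\rm spec}(\Delta_F)=[0,\infty)$ accumulates at $0$, there is no threshold in $x$ alone below which $x\sqrt{\Delta_F}$ is uniformly small; the cutoff must be a function of $x\sqrt{\Delta_F}$. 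Also, your claim that ${\rm Re}(\la)\geq 0$ makes $\Delta_F^\la$ bounded on each $H^N(F)$ is false (it loses $2\,{\rm Re}(\la)$ derivatives, since $t^\la$ is unbounded on $[0,\infty)$); the correct statement, which is all you need, is that $\Delta_F^\la:H^\infty(F)\to H^\infty(F)$. Finally, a small point in your favour: your computation of $F^+|_{x=0}=2^{-2\la}\tfrac{\Gamma(-\la)}{\Gamma(\la)}\Delta_F^\la f$ is correct and matches the proposition, whereas the last sentence of the paper's own proof contains a transparent $\pm$/exponent typo ($G^+_\la(0)$ and $G^-_\la(0)$ are swapped, and $2^{-\la}$ should be $2^{-2\la}$).
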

\begin{proof}
The holomorphy in $\mbox{Re}(s) \geq n/2$, $s \notin n/2 + \nn$, follows from the holomorphy 
of Bessel function $K_\la$.

To check the estimate for $x \geq 1$, fix $\chi\in \calC^\infty(\rr^+)$ which equals $1$ in $(2,\infty)$ and vanishes in $[0,1]$.
Then
\[
(1+\Delta_{F})^N \pl_x^\beta (\chi(x\sqrt{\Delta_F})K_{\la}(x\sqrt{\Delta_F})f)=\pl^\beta(\chi K_\la)(x\sqrt{\Delta_{F}}) \Delta_{F}^{\frac{\beta}{2}}(1+\Delta_{F})^{N}f.
\]
Since $\sup_{t\in\rr_+}|t^{\pm L}\pl^\beta (\chi K_\la)(t)|<\infty$ for any $L\in\rr$, we deduce that for all $L\geq 0$
\begin{equation}\label{forx>1} 
|| \pl_x^\beta (\chi(x\sqrt{\Delta_F})P^c(s)f)||_{H^{2N}(F)}\leq C_L x^{n/2 -L} ||f||_{H^{\mathrm{Re}(\lambda) + \beta + 2N - L}}.
\end{equation}
On the other hand, observe that $2^{-\la+1}K_\la(t)/\Gamma(\la)= t^{-\la}G^-_\la(t)+t^{\la}G^+_\la(t)$ for some smooth functions 
$G^\pm_\la$ on $[0,\infty)$ with $G^-_\la(0)=1$, and hence
\[
(1-\chi(x\sqrt{\Delta_{F}}))P^c(s)= (1-\chi(x\sqrt{\Delta_{F}}))(x^{\ndemi-\la} G^-_\la(x\sqrt{\Delta_{F}})+x^{\ndemi+\la} \Delta_{F}^{\la}
G^+_\la(x\sqrt{\Delta_{F}})).
\]
It is straightforward that $(1-\chi(x\sqrt{\Delta_{F}}))G^\pm_\la(x\sqrt{\Delta_{F}})f\in H^\infty(X_c)$ and for $x>1$
\[ ||\pl_x^\beta ((x^2\Delta_F)^\la(1-\chi(x\sqrt{\Delta_F})) G^+_\la(x\sqrt{\Delta_F})f)||_{H^{2N}(F)}\leq 
C ||f||_{L^2(F)},\]
which proves \eqref{stat1} and the statement about $x>1$ by combining with \eqref{forx>1}. 
The asymptotic limits when $x\to 0$ come from the asymptotic expansion of $K_\la(t)$ at $t=0$, which gives that
$G^+_\la(0)=1$ and $G_\la^-(0)=2^{-\la}\Gamma(-\la)/\Gamma(\la)$.
\end{proof}

\begin{remark}\label{remarkevenexp}
The functions $F^\pm$ in the Proposition above have a Taylor expansion at $x=0$ with only even powers  of $x$, this is
a consequence of the  fact that the functions $G_\la^\pm(z)$ defined in the proof of this Proposition 
are smooth functions of $z^2\in[0,\infty)$.
\end{remark}

\subsection{Scattering theory on $X$}
We now proceed to define the scattering operator in the usual way.
\begin{prop}\label{poisson}
Let $X=\Gamma\backslash \hh^{n+1}$ be a geometrically finite hyperbolic manifold and let $\rho$ be a function as in Section \ref{seccontinu}. Suppose that $s\in \{{\rm Re}(s)> n/2, 
s\not\in (n/2+\nn_0), s(n-s) \notin \sigma_{\rm pp}(\Delta_{X})\}$, and fix any $f\in H^\infty(\pl\bbar{X})$. 
Then there is a unique solution $u_s\in \calC^\infty(X)$ to the equation  $(\Delta_{X}-s(n-s))u_s=0$ for which
$u_s|_{x\geq 1}\in L^2(X)$, and such that there exist functions $G_\pm \in H^\infty(X)$ with
\[
u_s= \rho^sG_++\rho^{n-s}G_-, \qquad \mbox{where} \quad G_-|_{\rho=0}=f.
\] 
\end{prop}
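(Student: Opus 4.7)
\medskip

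\noindent\textbf{Proof proposal.}  The plan is to construct a global approximate Poisson operator $E_0(s)$ that realizes the prescribed boundary datum $f$, then correct by the resolvent $R_X(s)$ using Corollary \ref{propertyR(s)}; uniqueness follows from the assumption $s(n-s)\notin\sigma_{\rm pp}(\Delta_X)$.

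First I would fix the partition of unity $\{\chi_0,\chi_j^r,\chi_j^c\}$ subordinate to the cover $\{\calU_j^r\}\cup\{\calU_j^c\}$ from \S \ref{seccontinu}, and build $E_0(s)f$ piecewise. In each cusp neighbourhood $\calU_j^c\simeq \rr^+_x\x F_j$, use the explicit Poisson operator $P^c_j(s)$ of Proposition \ref{eisensteincusp} acting on the component $\chi^c_j\big|_{\pl\bbar{X}}f\in H^\infty(F_j)$. In each regular neighbourhood $\calU_j^r\simeq B\subset\hh^{n+1}$, build a local Poisson operator $E_j^r(s)$ by the indicial-equation / Borel summation argument recalled in \S \ref{secindicial}: one writes the candidate as $x^{n-s}F_{s,j}(x^2,y)$ with $F_{s,j}|_{x=0}$ prescribed, solves inductively for the even Taylor coefficients of $F_{s,j}$ using \eqref{indicialeq}, and Borel-sums. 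Setting
\[
E_0(s)f := \sum_{j\in J^c}\chi_j^c\, P^c_j(s)(\chi^c_j f) + \sum_{j\in J^r}\chi_j^r\, E_j^r(s)(\chi^r_j f),
\]
one obtains an element of $\rho^{n-s}H^\infty(X)+\rho^{s}H^\infty(X)$ whose $\rho^{n-s}$-leading coefficient is $f$ at $\pl\bbar{X}$, with $(\Delta_X-s(n-s))E_0(s)f\in \dot{H}^\infty(X)$; the key inputs are Remark \ref{remarkevenexp} (even Taylor expansion in the cusp) matched against the even-expansion structure of the regular indicial construction, and the fact that $[\Delta_X,\chi^{r/c}_j]$ produces only compactly supported error terms in $\dot{H}^\infty(X)$.

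Next I would define
\[
u_s := E_0(s)f - R_X(s)\bigl((\Delta_X-s(n-s))E_0(s)f\bigr).
\]
By construction $(\Delta_X-s(n-s))u_s=0$. Since $(\Delta_X-s(n-s))E_0(s)f\in\dot{H}^\infty(X)$, Corollary \ref{propertyR(s)} shows that the correction lies in $\rho^{s-\ndemi}H^\infty(X)$; a slightly sharper version of the same Bessel-kernel estimate (using the fact that the source is $\calO(x^\infty)$ rather than merely $\calO(x^\infty)$ in a cusp, so the integral defining $u_I^<$ in \eqref{fourierversion} absorbs an extra factor $x^\ndemi$) promotes this to $\rho^s H^\infty(X)$. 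Therefore the correction only modifies $G_+$, and the decomposition $u_s=\rho^s G_+ + \rho^{n-s}G_-$ with $G_-|_{\rho=0}=f$ is preserved. The bound $u_s|_{x\ge1}\in L^2(X)$ follows because $\rho^s H^\infty(X)\subset L^2(X)$ outside a compact set when ${\rm Re}(s)>n/2$, and $P^c_j(s)f$ satisfies the same $L^2$ bound for $x\ge 1$ by Proposition \ref{eisensteincusp}.

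Finally, for uniqueness, suppose $u_s$ satisfies the hypotheses with $f=0$. Then $G_-|_{\rho=0}=0$ means $\rho^{n-s}G_- = \calO(\rho^{n-s+1})$, hence $u_s \in \rho^s H^\infty+\rho^{n-s+1}H^\infty$ near $\pl\bbar{X}$, and since ${\rm Re}(s)>n/2$ both terms are square-integrable against the hyperbolic volume form there; combined with $u_s|_{x\ge1}\in L^2(X)$ in the cusps, this gives $u_s\in L^2(X)$. Then $u_s$ is an $L^2$ eigenfunction of $\Delta_X$ with eigenvalue $s(n-s)$, which by assumption is excluded, so $u_s=0$. The main obstacle in the whole scheme is verifying that the correction $R_X(s)h$ for $h\in\dot{H}^\infty(X)$ actually lies in $\rho^s H^\infty(X)$ (not just $\rho^{s-\ndemi}H^\infty(X)$), with no contribution to the $\rho^{n-s}$ channel; this requires revisiting the Bessel estimates of Lemma \ref{borneA>0} in the regime where the source vanishes to infinite order in $x$, which forces the $u_I^>$ term in \eqref{fourierversion} to contribute only to the $\rho^s$ asymptotic.
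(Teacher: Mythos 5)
Your construction follows the paper's proof essentially step for step: build an approximate Poisson operator on the cusp neighbourhoods using $P^c_j(s)$ and on the regular neighbourhoods using the indicial equation, Borel-sum, and then correct with the resolvent; uniqueness by $L^2$ exclusion. The only cosmetic difference is that the paper seeds the regular part with $\chi^r_j x^{n-s}f$ and solves away the error afterwards, whereas you build the local Poisson operator $E^r_j(s)$ at the outset; these are interchangeable.

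You are right, and sharper than the paper, to flag that Corollary~\ref{propertyR(s)} as literally stated only yields $R_X(s)h\in\rho^{s-\ndemi}H^\infty(X)$, which is not strong enough to place the correction term into the required channel $\rho^sH^\infty(X)+\rho^{n-s}H^\infty(X)$ when $n/2<\mathrm{Re}(s)<3n/4$; the paper cites the corollary without comment. However, your explanation of where the missing factor $x^{\ndemi}$ comes from is off: the improvement is not a further exploitation of the rapid decay of the source in the Bessel integral for $u_I^<$ (the source there is already $\calO(x^\infty)$), but rather the unitary conjugation $\Delta_{X_c}=x^{\ndemi}P\,x^{-\ndemi}$, i.e. $R_{X_c}(s)=x^{\ndemi}R_P(s)x^{-\ndemi}$. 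The estimate actually proved in Lemma~\ref{borneA>0} is for $R_P(s)$ (the Bessel kernels there are those of the $P_I$), giving $\rho^{s-\ndemi}H^\infty$; undoing the conjugation multiplies by $x^{\ndemi}$, which near $x=0$ upgrades $\rho^{s-\ndemi}$ to $\rho^{s}$ (while the growth $(1+x)^{\ndemi}$ at the cusp top is harmless since $h\in\dot H^\infty$ forces rapid decay there). With this reading of Lemma~\ref{borneA>0}/Corollary~\ref{propertyR(s)} the step you worried about goes through, and the rest of your argument is correct.
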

\begin{proof}
The problem is solved in each cusp neighbourhood $\calU^c_j$ using the Poisson operator $P^c_{j}(s):H^\infty(F_j)\to 
\calC^\infty(\Gamma_j \backslash\hh^{n+1})$ in each model space $\Gamma_j\backslash\hh^{n+1}$ given by \eqref{defP^c} . 
Fix cutoff functions $\hat{\chi}^{c}_j$, $\chi^{c}_j$ and $\chi_j^r$ as above and write $\phi^{c}_j=\chi^{c}_j|_{x=0}$; then set
\[
u^c=\sum_{j\in J^c}\hat{\chi}^c_jP^c_{j}(s)\phi^c_jf , \quad u^r=\sum_{j\in J^r} \chi_j^r x^{n-s} f.
\]
where in each $\calU^r_j$, $(x,y)$ are the coordinates induced by the half-ball \eqref{halfball}. These satisfy
\[
\begin{gathered}
(\Delta_{X_c}-s(n-s))u^c=\sum_{j\in J^c}[\Delta_X,\hat{\chi}^c_j]P^c_j(s)\phi_j^c f := q^c, \\ 
(\Delta_{X_c}-s(n-s))u^r=\sum_{j\in J^r} ([\Delta_X,\chi^r_j]f+x^2\chi^r_j \Delta_yf):=q^r.
\end{gathered} 
\]
By Proposition \ref{eisensteincusp} and equation \eqref{commutator}, $q^c\in \rho^{s+2}\calC_0^\infty(\bbar{X})+
\rho^{n-s+2}\calC_0^\infty(\bbar{X}) $, while $q^r\in \rho^2\calC_0^\infty(\bbar{X})$. Remark \ref{remarkevenexp} shows that 
the $\calC_0^\infty(\bbar{X})$ functions in the expansion of $q^c$ are smooth 
functions of $x^2$ in $\calU^c_j$, 
while in $\calU^r_j$, the function $x^{-s-2}q^r$ has an even Taylor expansion in powers of $x$ if $\chi^r_j$ are taken as functions of $(x^2,y)$ in $\calU^r_j$. Then, using the 
indicial equations \eqref{indicialeq2}, \eqref{indicialeq} in each of these neighbourhoods, we remove all terms in the expansion of 
the remainder terms $q^c, q^r$ at $\rho=0$, 
just as we did in the resolvent parametrix construction. This gives a function 
$v_s=\rho^sv^+ +\rho^{n-s}v^-$ with $v^\pm \in \calC_0^\infty(\bbar{X})$, such that   
\[
(\Delta_X-s(n-s))v_s \in  \dot{\calC}_0^\infty(\bbar{X}), \quad v^-|_{\pl\bbar{X} }=f \qquad \mbox{ and }\,\, v^{\pm} \in H^\infty(X).
\]  

Finally, set
\[
u_s:=  v_s -R(s)(\Delta_X-s(n-s))v_s;
\]
the mapping properties of $R(s)$ from Corollary \ref{propertyR(s)} and the expansion of the terms involving
$P_c(s)$ from Proposition \ref{eisensteincusp} show that this is indeed a solution to the problem.

We conclude by proving uniqueness. First note that, using the indicial equations \eqref{indicialeq} and \eqref{indicialeq2}, 
the expansion of a solution of the problem at $x=0$ is determined entirely by $G_+|_{x=0}$ and $G_-|_{x=0}$.  Therefore,
if $w$ is the difference of two such solutions, then $(\Delta_X - s(n-s))w = 0$ and $w \in L^2(X)$. Since ${\rm Re}(s)>n/2$
and $s \notin \sigma_{\mathrm{pp}}(\Delta_X)$, we have $w = 0$, which concludes the proof. 
\end{proof}

We can now define the Poisson operator for $\{s\in \cc; {\rm Re}(s)>n/2, s\notin \ndemi+\nn\}$ by 
$P_X(s)f = u_s$, where $u_s$ is the solution obtained in Proposition \ref{poisson}. With $\rho$ as in Section \ref{seccontinu}, 
and for this range of $s$, we see that
\begin{equation}\label{defpoisson} 
P_X(s):  H^\infty(\pl\bbar{X} ) \longrightarrow \rho^s H^\infty (X)+ \rho^{n-s} H^\infty(X).
\end{equation}
The next result shows that this extends to the closed half-plane:
\begin{prop}\label{holomorphy}
The operator $P_X(s)$, which is holomorphic in $\{s\in \cc; {\rm Re}(s)>n/2, s\notin \ndemi+\nn\}$, admits a meromorphic 
continuation to the entire complex plane as an operator $\calC_0^\infty(\pl\bbar{X}) \rightarrow \calC^{\infty}(X)$. Moreover,
\[
\begin{gathered}
 P_X(s): H^\infty(\pl\bbar{X})\to \rho ^sH^\infty(X)+ \rho^{n-s}H^\infty(X) , \,\, {\rm if } \,\, {\rm Re}(s)\geq n/2,
\end{gathered}
\]
\end{prop}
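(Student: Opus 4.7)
The plan is to promote the representation $P_X(s)f = v_s - R_X(s)(\Delta_X-s(n-s))v_s$ used already in Proposition \ref{poisson} into a globally meromorphic identity. For a fixed $f\in \calC_0^\infty(\pl\bbar X)$, I would first construct a meromorphic family $s\mapsto v_s\in \calC^\infty(X)$ of asymptotic/approximate Poisson solutions which has the prescribed behaviour $v_s=\rho^{n-s}v^-+\rho^s v^+$ with $v^\pm\in H^\infty(X)$ and $v^-|_{\pl\bbar X}=f$. In each cusp neighbourhood $\calU^c_j$ I would set the model piece equal to $\hat\chi^c_jP^c_j(s)\phi^c_jf$; the explicit formula \eqref{defP^c} together with the standard identity $K_\la(t)=t^{-\la}G^-_\la(t)+t^\la G^+_\la(t)$ with $G^\pm_\la$ entire in $\la$ and smooth in $t^2$ shows that $P^c_j(s)$ extends meromorphically to $s\in\cc$, with possible poles only coming from the prefactor $\Gamma(\la)^{-1}$ and from $\Delta_F^\la$ on the null space $\calI_0$, both of which yield simple poles of finite rank. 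In each regular neighbourhood $\calU^r_j$ I would start from $\chi^r_j\rho^{n-s}f$ and, using Borel summation together with the indicial equations \eqref{indicialeq}--\eqref{indicialeq2}, inductively improve it so that
\[
 (\Delta_X-s(n-s))v_s\in \dot{\calC}_0^\infty(\bbar X),
\]
exactly as in the resolvent parametrix construction in \S\ref{seccontinu}. The intertwining of the two indicial exponents $s$ and $n-s$ is handled exactly as in Remark \ref{remarkevenexp}, so the construction yields $v_s$ depending meromorphically on $s\in\cc$ with only simple finite-rank poles at the expected exceptional set.

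Once this $v_s$ is in hand, the second term $R_X(s)(\Delta_X-s(n-s))v_s$ extends meromorphically to $s\in\cc$ simply by Theorem \ref{mainth}, since $(\Delta_X-s(n-s))v_s$ lies in $\dot{\calC}_0^\infty(\bbar X)\subset \rho^N L^2(X)$ for every $N>0$, and the poles of this composition are of finite rank. Elliptic regularity on $X$ guarantees that the resulting $u_s := v_s - R_X(s)(\Delta_X-s(n-s))v_s$ lies in $\calC^\infty(X)$, and the uniqueness argument at the end of the proof of Proposition \ref{poisson} (together with analytic continuation) shows that the extended $P_X(s)f=u_s$ agrees with the original definition wherever the latter was already defined. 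This gives the first assertion.

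For the mapping property on $\{{\rm Re}(s)\geq n/2\}$, I would argue as follows. For $f\in H^\infty(\pl\bbar X)$, Proposition \ref{eisensteincusp} (applied on each cusp) and the explicit expansion in the regular neighbourhoods give $v_s\in \rho^sH^\infty(X)+\rho^{n-s}H^\infty(X)$ with holomorphic dependence on $s$ throughout $\{{\rm Re}(s)\geq n/2,\ s\notin n/2+\nn\}$. The error $(\Delta_X-s(n-s))v_s$ lies in $\dot{H}^\infty(X)$, so Corollary \ref{propertyR(s)} yields $R_X(s)(\Delta_X-s(n-s))v_s\in \rho^{s-n/2}H^\infty(X)\subset \rho^sH^\infty(X)+\rho^{n-s}H^\infty(X)$ for ${\rm Re}(s)\geq n/2$. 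Combining gives the claimed mapping property away from the exceptional points $s\in n/2+\nn$; at these points one removes the poles in the standard way by taking the constant term of the Laurent expansion, noting that the two indicial exponents coincide modulo $2\zz$ there and that the $\Gamma(\la)^{-1}$ factors in $P^c_j(s)$ and in $v_s$ cancel the apparent singularities, as in \cite{GZ2}.

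The main obstacle will be the bookkeeping at the exceptional set $n/2+\nn$: at these points the two leading exponents $s$ and $n-s$ differ by a positive even integer and the indicial equation fails, which would introduce logarithmic terms were it not for the $\Gamma$-factors in $P^c_j(s)$ which precisely vanish on the offending coefficients. Verifying this cancellation cusp by cusp (using the even Taylor expansion of $G^\pm_\la$ observed in Remark \ref{remarkevenexp}) is the most delicate point; once it is done, the estimates needed to place the output in $\rho^sH^\infty(X)+\rho^{n-s}H^\infty(X)$ on the closed half-plane follow from the uniform Bessel bounds of Proposition \ref{eisensteincusp} together with the $H^\infty$-mapping estimate of Corollary \ref{propertyR(s)}.
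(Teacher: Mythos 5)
Your plan diverges from the paper's proof in a way that opens a genuine gap. The paper does \emph{not} build the meromorphically continued $P_X(s)$ from the model Poisson operators $P^c_j(s)$; instead it constructs a purely polyhomogeneous approximate solution $\Phi(s)\in\rho^{n-s}\calC_0^\infty(\bbar X)$ with $\rho^{s-n}\Phi(s)|_{\rho=0}=f$, using only the indicial equations \eqref{indicialeq}--\eqref{indicialeq2} and Borel summation (so no Bessel operator at all), and then sets $P_X(s)f:=\Phi(s)-R_X(s)(\Delta_X-s(n-s))\Phi(s)$. Because $\Phi(s)$ is explicitly meromorphic on $\cc$ by construction and $(\Delta_X-s(n-s))\Phi(s)\in\dot{\calC}_0^\infty(\bbar X)$, the meromorphic continuation is immediate from Theorem \ref{mainth}, and the mapping property on $\{\mathrm{Re}(s)\geq n/2\}$ comes from Corollary \ref{propertyR(s)}.

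Your version instead takes $v_s$ built from $\hat\chi^c_jP^c_j(s)\phi^c_jf$ in the cusps and then continues the identity $P_X(s)f=v_s-R_X(s)(\Delta_X-s(n-s))v_s$. The step where you assert that ``the explicit formula \eqref{defP^c} $\ldots$ shows that $P^c_j(s)$ extends meromorphically to $s\in\cc$'' is not justified, and in fact is the hard point. Proposition \ref{eisensteincusp} only establishes holomorphy for $\mathrm{Re}(\la)\geq 0$, $\la\notin\nn$; for $\mathrm{Re}(\la)<0$ the term $x^\la\Delta_F^\la G_\la^+(x\sqrt{\Delta_F})$ involves negative powers of $\Delta_F$, and the difficulty is not just the zero modes $\calI_0$ but the accumulation of the thresholds $b_I$ at $0$ when the cusp is irrational. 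Controlling this requires the spectral-measure vanishing argument from Proposition \ref{firstcase} all over again. Also, $1/\Gamma(\la)$ is entire, so it does not contribute poles as you claim; its zeros at $\la\in-\nn_0$ are what make things better, not worse. In short, without a separate proof that $P^c_j(s)$ continues meromorphically to the left half-plane, your formula for $P_X(s)$ is only defined on $\{\mathrm{Re}(s)\geq n/2\}$, which is precisely where it was already known. The paper's choice of $\Phi(s)\in\rho^{n-s}\calC_0^\infty(\bbar X)$ is exactly what avoids this issue, by never invoking $\Delta_F^\la$ for $\mathrm{Re}(\la)<0$: all the ``hard'' analytic continuation is delegated to the already-proven Theorem \ref{mainth} on the resolvent.

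Your treatment of the exceptional points $s\in n/2+\nn$ (Laurent constant term, $\Gamma$-factor cancellation) is also more than the statement asks for: the proposition permits finite-rank poles, so no removal of singularities is needed. The second part of your argument --- using Corollary \ref{propertyR(s)} and Proposition \ref{eisensteincusp} to get the mapping property $P_X(s):H^\infty(\pl\bbar X)\to \rho^sH^\infty(X)+\rho^{n-s}H^\infty(X)$ for $\mathrm{Re}(s)\geq n/2$ --- is sound and matches the paper, once you replace $v_s$ by the paper's $\Phi(s)$.
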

\begin{proof}  The existence of the meromorphic continuation will follow from a slight variant of the construction
of $P_X(s)$. 

Fix any $f\in \calC^\infty_0(\pl\bbar{X})$ and construct (using the indicial equation in each chart and Borel summation) 
a function $\Phi(s)\in \rho^{n-s}\calC_0^\infty(\bbar{X})$
which satisfies
\[ 
(\Delta_X-s(n-s))\Phi(s)\in \dot{\calC}_0^\infty(\bbar{X}), \qquad \mbox{with}\quad \left. \rho^{s-n}\Phi(s)\right|_{\rho=0}=f.
\]   
Now use the resolvent to solve away this error term. This leads to the formula
\[ 
P_X(s)f:= \Phi(s)-R_X(s)(\Delta_X-s(n-s))\Phi(s). 
\] 
The right hand side obviously continues meromorphically to $\cc$ with finite rank poles. The fact that this lies 
in  $\rho^{n-s}\calC_0^\infty(\bbar{X})+\rho^sH^\infty(X)$ when ${\rm Re}(s)\geq n/2$ follows from Corollary \ref{propertyR(s)}.
\end{proof}

\begin{lem}\label{kernelPs}
The integral kernel of $P_X(s)$ is related to the integral kernel of $R_X(s)$ by 
\[ 
P_X(s; m,b')= (2s-n)[\rho(m')^{-s}R_X(s; m,m')]|_{m'=b'} , \quad m\in X, \,\, b'\in \pl\bbar{X}.
\] 
\end{lem}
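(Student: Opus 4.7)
The plan is to derive the kernel identity from Green's formula applied to the pair $(\Phi(s), R_X(s)g)$, where $\Phi(s)$ is the boundary parametrix produced in the proof of Proposition \ref{holomorphy} and $g \in \dot{\calC}_0^\infty(\bbar{X})$ is an interior test function. Since both sides of the formula depend meromorphically on $s$, it suffices to verify it for real $s > n/2$ with $s(n-s) \notin \sigma_{\rm pp}(\Delta_X)$, where $R_X(s)$ is bounded and self-adjoint on $L^2(X)$; the symmetry $R_X(s;m,m') = R_X(s;m',m)$ and the identity below then extend to all of $\cc$ by meromorphy.

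Fix $f \in \calC_0^\infty(\pl\bbar{X})$ and $g \in \dot{\calC}_0^\infty(\bbar{X})$. From the proof of Proposition \ref{holomorphy}, write $P_X(s)f = \Phi - R_X(s)h$, where $\Phi \in \rho^{n-s}\calC^\infty(\bbar{X})$ is compactly supported with $\rho^{s-n}\Phi|_{\pl\bbar{X}} = f$, and $h := (\Delta_X - s(n-s))\Phi \in \dot{\calC}_0^\infty(\bbar{X})$. Let $\psi := R_X(s)g \in \rho^s H^\infty(X)$ by Corollary \ref{propertyR(s)}, and set $\psi = \rho^s\tilde\psi$. Self-adjointness gives
\[
\langle P_X(s)f, g\rangle = \langle \Phi, g\rangle - \langle h, \psi\rangle = \int_X (\Phi g - h\psi)\,dv_g,
\]
the pairings converging because $g$ and $h$ are compactly supported and vanish to infinite order at $\pl\bbar{X}$. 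Now apply Green's identity on $X_\epsilon := \{\rho \geq \epsilon\}$: since $\Delta_X$ is the positive Laplacian,
\[
\int_{X_\epsilon}(\Phi\,\Delta_X\psi - \psi\,\Delta_X\Phi)\,dv_g = -\int_{\pl X_\epsilon}(\Phi\,\pl_\nu\psi - \psi\,\pl_\nu\Phi)\,d\sigma_g,
\]
and the interior integrand equals $\Phi g - h\psi$.

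On a cusp face $\{x=\epsilon\}$, the outward $g$-unit normal is $-x\pl_x$ and $d\sigma_g = \epsilon^{-n}dv_{\pl\bbar{X}}$; writing $\Phi = x^{n-s}F$ with $F \in \calC^\infty(\bbar X)$, $F|_{x=0}=f$, and $\psi = x^s\tilde\psi$, a direct expansion yields
\[
\psi\,\pl_x\Phi - \Phi\,\pl_x\psi = (n-2s)\,x^{n-1} F\,\tilde\psi + O(x^n),
\]
so the boundary integral converges as $\epsilon \to 0$ to $(2s-n)\int_{\pl\bbar{X}} f\,\tilde\psi|_{\pl\bbar{X}}\,dv_{\pl\bbar{X}}$; the analogous computation in the regular charts yields the same limit. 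Combining,
\[
\langle P_X(s)f, g\rangle = (2s-n)\int_{\pl\bbar{X}} f(b')\,[\rho(m')^{-s}R_X(s)g(m')]_{m'=b'}\,dv_{\pl\bbar{X}}(b').
\]
Expanding $R_X(s)g$ as an integral against its kernel, exchanging the order of integration, and invoking the resolvent symmetry then reads off the claim. The main technical point is controlling the $O(x^n)$ remainder uniformly as $\epsilon\to 0$, which rests on the precise regularity $\tilde\psi \in H^\infty(X)$ up to the boundary provided by Corollary \ref{propertyR(s)}, together with the smoothness of $F$ on $\bbar X$ guaranteed by the Borel summation step used to build $\Phi$.
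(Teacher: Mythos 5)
Your proof is correct and follows essentially the same route as the paper: Green's formula applied to the boundary parametrix $\Phi(s)$, with the resulting boundary integral at $\{x=\epsilon\}$ computed from the leading asymptotics of $\Phi$ and of $R_X(s)g$ (respectively $\rho^{n-s}$ and $\rho^{s}$) and the unit normal $-x\partial_x+\mathcal O(x^2)$. The only cosmetic difference is that you test against $g\in\dot{\calC}_0^\infty(\bbar X)$ and invoke self-adjointness of $R_X(s)$ at the end to pass to the kernel, whereas the paper keeps $m$ fixed and applies Green's formula directly with $R_X(s;m,\cdot)$ viewed as a distribution in $m'$; these are the same argument in dual packaging.
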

\begin{proof}
This relationship is derived almost exactly as in the convex cocompact case; we sketch it for the convenience of the reader. 
Combining Green's formula and the equation $(\Delta_X-s(n-s))R_X(s;m,m')=\delta(m-m')$, we obtain
\[
\begin{split} 
P_X(s)f(m) &= \Phi(s;m)-\lim_{\eps\to 0}\int_{x(m')\geq \eps} R_X(s;m,m')(\Delta_X-s(n-s))\Phi(s;m')\, dv_g(m')\\  
 &= \lim_{\eps\to 0}\int_{x(m')=\eps} \Big(\pl_{n'}R_X(s;m,m')\Phi(s;m')-R_X(s;m,m')\pl_{n'}\Phi(s;m')\Big)\, dv_g(m');
\end{split}
\] 
here $\pl_{n'}$ is the inner unit normal to $\{x(m')=\eps\}$ acting on the $m'$ variable and $x$ is a global defining function of $\pl\bbar{X}$ as in Section \ref{seccontinu} (and $\rho=x/(1+x)$). Note too that the integration is over 
a compact set $K$ in $\bbar{X}$ because $\Phi(s)\in \calC_0^\infty(\bbar{X})$. It is not hard to check that in terms of local
coordinates $(x,y)$, $\pl_{n'}  = x\pl_x+\alpha x^2\pl_x+\sum_i\beta_i x\pl_{y_i}$ with $\alpha,\beta_i \in \calC_0^\infty(\bbar{X})$.
Hence considering the asymptotic expansions of $R_X(s;m,m')$ and $\Phi(s;m')$ and their derivatives with respect to
$\pl_{n'}$ as $x(m')\to 0$, we obtain
\[
P_X(s)f(m)=(2s-n)\int_{\pl\bbar{X}} [x(m')^{-s}R_X(s;m,m')]|_{m'=b'}f(b')dv_{\pl\bbar{X}}(b'),
\]
as desired.
\end{proof}

Combining this Lemma with Proposition \ref{smoothness} we obtain that
\begin{equation}
\label{P(s)smooth}
P_X(s)\in \rho^{s}\calC^\infty((\bbar{X}\x \pl\bbar{X})\setminus{\rm diag}_{\pl\bbar{X}}),
\end{equation}
where ${\rm diag}_{\pl\bbar{X}}:=\{(b,b)\in\pl\bbar{X}\x\pl\bbar{X}\}$. Moreover, Corollary \ref{corRe(s)>k/2}  gives
that for all $m \in X$,
\begin{equation}
\label{P(s)L^2} 
P(s; m, \cdot )\in L^2(\pl\bbar{X}, dv_{\pl\bbar{X}}), \quad \textrm{ if }\, {\rm Re}(s)> \bar{k}/2,
 \end{equation}
where $\bar{k}$ is the maximum of the ranks of all nonmaximal rank cusps of $X$. Indeed, using the coordinates $(v,z)$ 
from \eqref{coorduv} on the boundary of a cusp neighbourhood $\calU^c_j \cap \{x=0\}$ of a rank $k$ cusp, 
the measure on $\pl\bbar{X}$ equals $|v|^{2k}dvdz$, and $P_X(s;m,v)= \calO(|v|^{-k})$ as $|v|\to 0$.

The resolvent and Poisson kernels are also related  by a functional equation.
\begin{lem}\label{fcteq}
There is an identity
\begin{equation}\label{fcteqR} 
\begin{split}
R_X(s;m,m')-R_X(n-s;m,m') = \hfill \\
\hfill \frac{1}{(2s-n)}\int_{\pl\bbar{X}}P_X(s;m,b)P_X(n-s;m',b)\, dv_{\pl\bbar{X}}(b).
\end{split}
\end{equation} 
which holds for any $m,m'\in X$ when $|{\rm Re}(s)-n/2|<1/2$. 
\end{lem}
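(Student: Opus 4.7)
The plan is the classical Green's-formula derivation, adapted to the non-compactness of $\pl\bbar{X}$ in the geometrically finite setting. I would apply Green's identity to the pair
\[u_1(\til m) := R_X(s;m,\til m), \qquad u_2(\til m) := R_X(n-s;m',\til m),\]
on the truncated region $X_\eps^R := \{\til m \in X : \rho(\til m) \geq \eps\}$, additionally cut off in each cusp neighbourhood $\calU^c_j$ by $\{x \leq R\}$, and then pass to the limits $R \to \infty$ and $\eps \to 0$. Since $s(n-s)$ is invariant under $s \leftrightarrow n-s$, both $u_i$ satisfy $(\Delta_X - s(n-s))u_i = \delta_{m_i}$ (with $m_1 := m$, $m_2 := m'$), and Green's identity together with symmetry of $R_X$ yield
\[R_X(s;m,m') - R_X(n-s;m,m') = \int_{\pl X_\eps^R}\bigl(u_2 \pl_\nu u_1 - u_1 \pl_\nu u_2\bigr)\, d\sigma,\]
where $\pl_\nu$ is the outward unit normal in $g_X$. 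The boundary splits into the conformal part $\{\rho=\eps\}$ and the cuspidal-interior parts $\{x=R\}$.

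For the conformal part, Corollary \ref{propertyR(s)}, Lemma \ref{kernelPs} and Lemma \ref{technical}, together with the parametrix formula \eqref{formulaext}, give the one-sided expansions
\[u_1(\til m) = \frac{\rho(\til m)^s}{2s-n}\, P_X(s;m,b) + \calO(\rho^{s+2}), \qquad u_2(\til m) = \frac{\rho(\til m)^{n-s}}{n-2s}\, P_X(n-s;m',b) + \calO(\rho^{n-s+2})\]
as $\til m \to b \in \pl\bbar{X}$; crucially only the $\rho^s$ (resp.\ $\rho^{n-s}$) power appears for $u_1$ (resp.\ $u_2$), reflecting the outgoing nature of the Green kernel produced by the construction of Theorem \ref{mainth}. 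Using $\pl_\nu = -x\pl_x + \calO(x)$ at $\{x=\eps\}$ and the fact that the induced volume form there equals $\eps^{-n}\, dv_{g_F|_{x=\eps}}$, a direct computation cancels the factor $\eps^n$ coming from $u_2 \pl_\nu u_1 - u_1 \pl_\nu u_2$ against the factor $\eps^{-n}$ in the volume form, producing in the limit $\eps \to 0$
\[\int_{\{\rho=\eps,\, x \leq R\}}\bigl(u_2 \pl_\nu u_1 - u_1 \pl_\nu u_2\bigr)\, d\sigma \longrightarrow \frac{1}{2s-n}\int_{\pl\bbar{X}\cap\{x \leq R\}} P_X(s;m,b) P_X(n-s;m',b)\, dv_{\pl\bbar{X}}(b).\]

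For the cuspidal boundary $\{x=R\}$, I would use the explicit Fourier--Bessel formula \eqref{resolvante} for the model resolvent, which equals $R_X(s)$ in the cusp neighbourhood modulo parametrix remainder terms supported away from $\{x=R\}$ once $R$ is large. Each mode involves $K_{s-\ndemi}(x\sqrt{t^2+b_I^2})$, which decays exponentially in $x$ when $b_I > 0$ or $t$ is bounded away from $0$; the finitely many modes with $b_I=0$ are handled by the reduction to a lower-dimensional hyperbolic resolvent used in the proof of Proposition \ref{firstcase}, whose kernel also decays adequately. Together these show that the $\{x=R\}$ contribution vanishes as $R \to \infty$.

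The main obstacle is justifying the limit $\eps \to 0$ on the non-compact boundary $\pl\bbar{X}$: near each cuspidal end of $\pl\bbar{X}$, $P_X(s;m,\cdot)$ blows up like $|v|^{-k}$ by Corollary \ref{corRe(s)>k/2}, yet lies in $L^2(\pl\bbar{X})$ against the natural measure by \eqref{P(s)L^2}. The hypothesis $|{\rm Re}(s)-n/2|<1/2$ is precisely what forces both $P_X(s;m,\cdot)$ and $P_X(n-s;m',\cdot)$ to lie simultaneously in $L^2(\pl\bbar{X})$: since $\bar k \leq n-1$, one has $\bar k/2 \leq (n-1)/2 < {\rm Re}(s) < (n+1)/2 \leq n - \bar k/2$, which is both what makes the right-hand side of \eqref{fcteqR} well-defined and, combined with the pointwise estimates of Corollary \ref{corRe(s)>k/2}, what supplies the $L^1$-majorant needed for dominated convergence. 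Combining the conformal-boundary limit, the vanishing of the cuspidal-interior contribution, and dominated convergence then yields \eqref{fcteqR}.
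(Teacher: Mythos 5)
Your overall plan -- Green's formula between $R_X(s;m,\cdot)$ and $R_X(n-s;m',\cdot)$, identify the boundary data via Lemma \ref{kernelPs}, and pass to the limit -- is exactly the paper's strategy. The decisive difference is the choice of truncation, and as written yours has a gap.

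The paper does not truncate with the globally bounded defining function $\rho=x/(1+x)$ plus a cut-off at $\{x=R\}$. It uses instead the function $u=x/(x^2+|y|^2)$ from \eqref{coorduv}, extended to a global boundary defining function, and applies Green's formula on $\{u\geq\eps\}$. The crucial point you miss is that in the cusp neighbourhood the level set $\{u=\eps\}$ is \emph{compact}: the equation $x/(x^2+|y|^2)=\eps$ forces $x\leq 1/\eps$ and $|y|\leq 1/(2\eps)$. The single compact hypersurface $\{u=\eps\}$ thus sweeps out both the conformal boundary and the interior of the cusp at once, so there is only one boundary term and one limit to take. By contrast your region $X_\eps^R=\{\rho\geq\eps\}\cap\{x\leq R\}$ is \emph{not} compact -- inside each $\calU^c_j$ it still permits $|y|\to\infty$ with $x$ fixed, which is another end (approaching the cusp point $(u,v)=(0,0)$) and is not captured by either of your two boundary pieces $\{\rho=\eps\}$ or $\{x=R\}$. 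Green's formula on a non-compact region requires a further exhaustion in $|y|$ and control of the corresponding boundary terms, which you neither introduce nor estimate. Your two-limit scheme $R\to\infty$, $\eps\to 0$ also needs an argument that the order of limits (or a joint limit) is harmless; the paper's single compact hypersurface avoids both difficulties and makes the footnoted cancellation of powers of $\eps$ and $|v|$ a clean one-step computation.

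Two smaller issues. First, the set $\{I\in\calI: b_I=0\}$ is in general \emph{infinite} (it is indexed by all $(m,p)$ with $A_{mp}\in\Lambda^*$, of which there are typically infinitely many as $m\to\infty$), so your phrase ``the finitely many modes with $b_I=0$'' is wrong. Second, for those modes the claimed vanishing of the $\{x=R\}$ boundary term cannot be read off from exponential Bessel decay: when $b_I=0$ and $t\to 0$ simultaneously with $x\to\infty$, $K_{s-\ndemi}(xt)$ does not decay. The actual mechanism is the power-law estimate of Corollary \ref{corRe(s)>k/2}, which gives $|R_X(s;w,w')R_X(n-s;w,w')|\lesssim x^n(x^2+|y|^2)^{k-n}$; this decay, balanced against the induced hyperbolic surface measure, is what makes the interior boundary contribution tend to $0$ (and the integral over $|y|$ converge). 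If you wish to keep your truncation scheme you should cite this estimate rather than the Bessel asymptotics.
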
 
\begin{proof} The proof is much the same as the one of Proposition 2.1 in \cite{Gui} or  Theorem 1.3 in \cite{FHP}. 
Use the coordinates $(u,v,z)$ from \eqref{coorduv} in each cusp neighbourdhood $\calU_j^c$; thus $u$ is a boundary 
defining function of $\pl\bbar{X}$ in $\calU_j^c\cap \bbar{X}$. We extend it to a global boundary defining function, 
still denoted $u$, for $\pl\bbar{X}$ on all of $\bbar{X}$. For $\eps>0$ small, we use Green's formula as in \cite[Prop 2.1]{Gui} to get
\begin{equation}\label{green} 
\begin{gathered} 
R_X(s;m,m')-R_X(n-s;m,m')= \\
-\int_{u(b)=\eps}\Big(R_X(s;m,b)\pl_nR_X(n-s;b,m')-
\pl_nR_X(s;b,m')R_X(n-s;b,m')\Big)dv_{g}(b),
\end{gathered}
\end{equation}
where $\pl_n$ is the inner unit normal to $\{u=\eps\}$. The metric in each regular neighbourhood $\calU^r_j$ has the form 
$g=(du^2+h_0+\calO(u))/u^2$, where $h_0$ is a metric on $\pl\bbar{X}$, while in each cusp neighbourhood $\calU^c_j$ it 
appears as
\[
g=\frac{du^2+|dv|^2+(u^2+|v|^2)^2|dz|^2}{u^2}.
\]  
Thus $\pl_n$ equals $u\pl_u$ in cusp neighbourhoods and $u\pl_u+\alpha u^2\pl_u +\sum_i \beta_i u\pl_{y_i}$,
with $\alpha,\beta_i\in \calC_0^\infty(\pl\bbar{X})$, in regular neighbourhoods. 

Introduce a partition of unity to localize to these different neighbourhoods. From the Lemma \ref{kernelPs}, the structure of 
$R_X(s)$ in $\calU^r_j$ and its symmetry $R_X(s;m,b)=R_X(s;b,m)$, we obtain the contribution to the integrand from $\{u=\eps\}\cap 
\calU^r_j$ in the limit as $\eps\to 0$ is given by\footnote{Notice that $(u/x)|_{\pl\bbar{X}}=|v|^{2}$
and the terms involving extra powers of $|v|$ from writing $P_X(s),P_X(n-s)$ as weighted 
restrictions to $\pl\bbar{X}$ cancel out with the extra powers of $|v|$
coming from writing the volume measure $dv_{g}(b)$ in terms of $dv_{\pl\bbar{X}}={\rm dvol}_{h_0}$.} $(2s-n)^{-1}P_X(s;m,b)P_X(n-s;m',b)dv_{\pl\bbar{X}}(b)$.
Applying analogous arguments, using Lemma \ref{kernelPs}, Corollary \ref{corRe(s)>k/2} and dominated convergence 
(the measure restricted on $\{u=\eps\}$ is $dv_g=(\eps^2+|v|^2)^k dv dz$ and so $P_X(s;m,\cdot )\in L^2$ on this hypersurface),
we find that the contribution from the cusp neighbourhoods is exactly the same. 
\end{proof}

We can now define the scattering operator $S_X(s): H^\infty(\pl\bbar{X})\to H^\infty(\pl\bbar{X})$ for ${\rm Re}(s)\geq n/2$ 
and $s\notin n/2+\nn$ by
\begin{equation}
\label{defscat} 
S_X(s)f := G_+|_{\pl\bbar{X}} 
\end{equation}
where $G_+\in H^\infty(X)$ is the function appearing defined in Propositions \ref{poisson} and \ref{holomorphy} for the expansion of 
$P_X(s)f$ at $\pl\bbar{X}$. From Theorem \ref{mainth} and the construction of $P_X(s)f$,
the operator $S_X(s)$ has a meromorphic continuation as a continuous operator $\calC_0^\infty(\pl\bbar{X}) \to 
\calC^{\infty}(\pl\bbar{X})$ to $\cc\setminus(n/2+\nn)$ with finite rank poles. 
\begin{lem}\label{kernelS}
The Schwartz kernel of $S_X(s)$ is given by 
\[
S_X(s; b,b')=[\rho(m)^{-s}P_X(s; m,b')]|_{m=b} , \quad b,b'\in\pl\bbar{X}. 
\]
Furthermore, for any $\varphi\in \calC_0^\infty(\pl\bbar{X})$, $\varphi S_X(s)\varphi$ is a classical pseudodifferential 
operator of order $2s-n$.
\end{lem}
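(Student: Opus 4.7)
The plan is to proceed in two stages: first identifying the Schwartz kernel of $S_X(s)$, and then analyzing its regularity.

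For the Schwartz kernel identity, I would use the defining property: if $P_X(s)f=\rho^{n-s}G_-+\rho^{s}G_+$ with $G_-|_{\pl\bbar{X}}=f$, then $S_X(s)f=G_+|_{\pl\bbar{X}}$. Writing $P_X(s)f(m)=\int P_X(s;m,b')f(b')\,dv_{\pl\bbar{X}}(b')$, the task is to extract the $\rho(m)^{s}$ coefficient of $P_X(s;m,b')$ as $m\to b$. By \eqref{P(s)smooth}, $\rho(m)^{-s}P_X(s;m,b')$ admits a smooth restriction to $m=b$ for $b\neq b'$; moreover, the $\rho^{n-s}$ component of the expansion at $m=b$ is distributionally supported on the diagonal $\{b=b'\}$, since choosing $f$ with support away from $b$ forces the $\rho^{n-s}$-coefficient of $P_X(s)f$ to vanish at $b$. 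This yields the formula off the diagonal, and the identity extends to all of $\pl\bbar{X}\times\pl\bbar{X}$ in the distributional sense.

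For the pseudodifferential claim, I would localize via the partition of unity $\{\chi_0,\chi_j^{r/c}\}$ from the proof of Theorem \ref{mainth} together with the parametrix representation $P_X(s)=\Phi(s)-R_X(s)(\Delta_X-s(n-s))\Phi(s)$ from Proposition \ref{holomorphy}. In regular neighbourhoods, the analysis reduces to the convex cocompact setting treated in \cite{MM,GRZ,JSB}, where $\varphi S_X(s)\varphi$ is known to be a classical pseudodifferential operator of order $2s-n$. In each cusp neighbourhood $\calU_j^c$, Proposition \ref{eisensteincusp} identifies the leading $\rho^{s}$-coefficient of the model Poisson operator $P^c_j(s)$ as the multiplier $c(s)\Delta_{F_j}^{\lambda}$, with $\lambda=s-n/2$ and $c(s)=2^{-2\lambda}\Gamma(-\lambda)/\Gamma(\lambda)$. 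The residual contribution coming from the $S_\infty(s)$ term in \eqref{formulaext} is smoothing and does not affect the principal structure.

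It then reduces to verifying that $\varphi\Delta_{F_j}^{\lambda}\varphi$ is a classical pseudodifferential operator of order $2\lambda$ for $\varphi\in\calC_0^\infty(\pl\bbar{X})$. Since $\Delta_{F_j}$ is elliptic, self-adjoint and non-negative on the complete flat manifold $F_j$, this should follow from Seeley's theorem on complex powers of elliptic operators: the contour representation $\Delta_{F_j}^{\lambda}=\frac{1}{2\pi i}\int_\gamma \zeta^{\lambda}(\Delta_{F_j}-\zeta)^{-1}d\zeta$, combined with the standard symbolic parametrix for the resolvent, yields a classical symbol of order $2\lambda$. The main obstacle lies precisely here in the cusp case: $F_j$ is noncompact and the bottom of the spectrum of $\Delta_{F_j}$ accumulates at zero in the irrational setting (cf.\ the discussion preceding \eqref{defCI}), so a direct appeal to Seeley's theorem is not immediate. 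The compact support of $\varphi$ nevertheless confines matters to a precompact piece of $F_j$ on which $\Delta_{F_j}$ is uniformly elliptic, and the delicate step is to verify that the functional-calculus definition of $\Delta_{F_j}^\lambda$ from \eqref{GPI} coincides, modulo smoothing operators, with the local symbolic calculus; this is achieved by inserting a cutoff in the spectral integral and controlling the contribution of the low-frequency tail using the vanishing of the spectral measure $dE_I(t)$ as $t\to 0$, in the same spirit as the argument in Proposition \ref{firstcase}.
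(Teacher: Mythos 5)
For the Schwartz kernel identity your argument differs slightly from the paper but is sound. The paper notes that $S_X(s)f=\lim_{x\to 0}(x^{-s}P_X(s)f)$ holds directly in $\{{\rm Re}(s)<n/2\}$, where $\rho^{n-2s}G_-\to 0$, and then appeals to meromorphic continuation; you instead observe that the $\rho^{n-s}$ contribution is diagonal-supported because $G_-$ vanishes to infinite order at $b$ when $f$ is supported away from $b$ (using the indicial recursion), so the identity holds off the diagonal for all $s$. Both routes are acceptable.

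For the pseudodifferential claim, however, your argument takes a genuinely different route from the paper's and leaves the central point unproved. You reduce the cusp contribution to showing that $\varphi\Delta_{F_j}^{\lambda}\varphi$ is a classical pseudodifferential operator of order $2\lambda$, and you correctly identify the obstruction — $F_j$ is noncompact, and when the holonomy is irrational the spectrum of $\Delta_{F_j}$ accumulates at $0$, so Seeley's theorem and the usual contour-integral construction do not apply off the shelf. But you then stop at ``this is achieved by inserting a cutoff in the spectral integral and controlling the low-frequency tail,'' which is precisely the assertion that needs to be proved, not a step that can be waved through: one must show that the low-frequency part $\varphi\,\indic_{[0,1]}(\Delta_{F_j})\Delta_{F_j}^{\lambda}\varphi$ is a smoothing operator \emph{and} that the high-frequency part agrees with the Seeley symbolic construction modulo smoothing, neither of which is immediate for a noncompact flat bundle. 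As written, this is a gap.

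The paper sidesteps the complex-power question entirely. By Lemma \ref{kernelPs} the kernel of $S_X(s)$ is the double boundary restriction of $(\rho\rho')^{-s}R_X(s)$, and by the parametrix constructions in Theorem \ref{mainth} and Proposition \ref{smoothness}, the kernel of $R_X(s)$ near $\pl\bbar{X}\times\pl\bbar{X}$ agrees, up to terms in $(\rho\rho')^{s}\calC^{\infty}$, with a locally finite sum of pull-backs of the hyperbolic model resolvents $R_j(s)$ in half-ball charts $\calU_j$ (this is true both in regular \emph{and} in cusp neighbourhoods, since Prop.~\ref{smoothness} itself is built from half-ball charts covering a relatively compact part of $\bbar{X}_c$). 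By \cite[Prop.~4.12]{PP}, the boundary restriction of each $R_j(s)$ gives $f_s(y)f_s(y')|y-y'|^{-2s}$, a classical conormal kernel of order $2s-n$, and the claim follows. This is considerably shorter and avoids any analysis of $\Delta_F^{\lambda}$. Your approach, if carried through, would give the sharper description (scattering operator $=\,c(s)\Delta_B^{s-n/2}$ modulo residual terms) alluded to in the introduction, but to constitute a proof of the lemma you would need to actually establish the pseudodifferential character of $\varphi\Delta_{F_j}^{\lambda}\varphi$ rather than assert it.

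Two smaller remarks: the reference \cite{JSB} does not exist in the paper's bibliography; and your claim that the residual $S_\infty(s)$ contribution is smoothing, while true, relies on \eqref{formulaext} and the regularity $S_\infty(s)\in\rho^\infty{\rho'}^s\calC_0^\infty(\bbar{X}\times\bbar{X})$ which should be cited explicitly.
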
 
\begin{proof} The first statement follows from the relationship $S_X(s)f=\lim_{x\to 0}(x^{-s}P_X(s)f)$ when ${\rm Re}(s)<n/2$ 
and the meromorphic extension. That $S_X(s)$ is pseudodifferential follows from the parametrix 
construction in the proof of Prop. \ref{smoothness} 
and from the formula of the models $R_j(s)$ in $\calU_j$ constructed in \S \ref{Guillopezworski}: indeed  by \cite[Prop 4.12]{PP},
$[(x(m)x(m'))^{-s}R_j(s,m,m')]_{(m,m')=(y,y')}= f_s(y)f_s(y')|y-y'|^{-2s}$ for some smooth functions $f_s$ in
the chart $\calU_j$ defined in the proof of Prop. \ref{smoothness}.
\end{proof}

\begin{lem}\label{functionalequations}
For ${\rm Re}(s)=n/2$ and $s\not=n/2$, there are identities
\[ 
P_X(s)=P_X(n-s)S_X(s) , \quad S_X(n-s)S_X(s)=S_X(s)S_X(n-s)={\rm Id}.
\]
\end{lem}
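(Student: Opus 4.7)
The plan is to derive both identities from the resolvent functional equation (Lemma \ref{fcteq}) by reading off two successive boundary expansions. I first fix an interior test function $\phi\in\dot{\calC}_0^\infty(X)$ (so $R_X(s)\phi$ carries no incoming data at infinity) and apply Lemma \ref{fcteq} to get, for $|\mathrm{Re}(s)-n/2|<1/2$,
\[
R_X(s)\phi - R_X(n-s)\phi \;=\; (2s-n)^{-1}\, P_X(s)\bigl[(P_X(n-s))^T\phi\bigr],
\]
where $(P_X(\sigma))^T\phi (b) := \int_X P_X(\sigma;m,b)\phi(m)\,dv_g(m)$ is the formal transpose applied in the interior variable.

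I then expand both sides as $m\to b\in\pl\bbar{X}$. Using the symmetry $R_X(s;m,m')=R_X(s;m',m)$ together with Lemma \ref{kernelPs}, the $\rho(m)^{s}$ coefficient of $R_X(s)\phi$ at $m=b$ equals $(2s-n)^{-1}((P_X(s))^T\phi)(b)$, while the $\rho(m)^{n-s}$ coefficient of $R_X(n-s)\phi$ equals $(n-2s)^{-1}((P_X(n-s))^T\phi)(b)$. On the right-hand side, the expansion
\[
P_X(s)h \;=\; \rho^{n-s}\,h + \rho^s\, S_X(s)h + O(\rho^{s+2})+O(\rho^{n-s+2})
\]
provided by Propositions \ref{poisson}--\ref{holomorphy} and the definition \eqref{defscat} makes the $\rho^{n-s}$ coefficients match trivially, while equality of the $\rho^s$ coefficients yields the operator identity
\[
(P_X(s))^T \;=\; S_X(s)\, (P_X(n-s))^T.
\]
Taking formal transposes, and using that $S_X(s)$ has a symmetric Schwartz kernel (this follows by combining Lemmas \ref{kernelPs} and \ref{kernelS} with $R_X(s;m,m')=R_X(s;m',m)$, which gives $S_X(s;b,b')=(2s-n)\lim_{m\to b,\,m'\to b'}\rho(m)^{-s}\rho(m')^{-s}R_X(s;m,m')=S_X(s;b',b)$), I obtain the first identity $P_X(s)=P_X(n-s)S_X(s)$.

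For the second identity, I swap $s\leftrightarrow n-s$ in the identity just proved to get $P_X(n-s)=P_X(s)S_X(n-s)$, and substitute back to obtain $P_X(s)=P_X(s)S_X(n-s)S_X(s)$. Comparing the $\rho^{n-s}$-coefficients on both sides (the left contributes $f$, the right contributes $S_X(n-s)S_X(s)f$) gives $S_X(n-s)S_X(s)=\mathrm{Id}$; a further swap $s\leftrightarrow n-s$ produces $S_X(s)S_X(n-s)=\mathrm{Id}$.

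The main technical obstacle is justifying the pointwise boundary expansions at the non-compact boundary $\pl\bbar{X}$, particularly in the ends coming from nonmaximal rank cusps. This requires: the pointwise decay of $R_X(s;m,m')$ near the cusp boundaries from Corollary \ref{corRe(s)>k/2}; the integrability bound \eqref{P(s)L^2} on $P_X(s;m,\cdot)$, which ensures that $(P_X(n-s))^T\phi$ is well-defined and lies in a space on which $P_X(s)$ acts (so the above composition makes sense); and the kernel regularity away from the diagonal up to the boundary from Proposition \ref{smoothness} together with the parametrix description \eqref{formulaext} from the proof of Theorem \ref{mainth}, which legitimize the differentiation in the normal direction needed to extract the $\rho^s$ and $\rho^{n-s}$ coefficients. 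Once these are in place, the manipulations above go through uniformly on $\{\mathrm{Re}(s)=n/2,\,s\neq n/2\}$.
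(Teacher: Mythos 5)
Your proof is correct and follows essentially the same strategy as the paper: both derive the first identity from the resolvent functional equation (Lemma \ref{fcteq}) by reading off boundary asymptotics, then use the symmetry $S_X(s;b,b')=S_X(s;b',b)$ (which you correctly trace back to $R_X(s;m,m')=R_X(s;m',m)$ through Lemmas \ref{kernelPs} and \ref{kernelS}), and obtain the second identity by a swap-and-substitute argument. The only packaging differences are cosmetic: you test the identity against $\phi\in\dot{\calC}_0^\infty(X)$ and obtain the transposed relation $(P_X(s))^T=S_X(s)(P_X(n-s))^T$ before transposing, whereas the paper manipulates the kernels directly; and for $S_X(n-s)S_X(s)=\mathrm{Id}$ you compare $\rho^{n-s}$-coefficients, which is equivalent to the paper's appeal to injectivity of $P_X(n-s)$ on $H^\infty(\pl\bbar{X})$ (injectivity is precisely the statement that the $\rho^{n-s}$-coefficient determines $f$). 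Two small points you should tighten: (i) to separate the $\rho^s$ and $\rho^{n-s}$ coefficients cleanly you should first establish the identity for $\mathrm{Re}(s)$ slightly off $n/2$ — say $\mathrm{Re}(s)\in((n-1)/2,n/2)$ as the paper does — where the two exponents have distinct real parts, and then extend to the critical line by meromorphy; on $\mathrm{Re}(s)=n/2$ the two leading powers have the same modulus and the coefficient extraction is only via the oscillatory factor, which is more delicate to justify directly. (ii) You should verify that $h=(P_X(n-s))^T\phi$ lies in $H^\infty(\pl\bbar{X})$ (not merely $L^2$), which is what is needed for the expansion $P_X(s)h=\rho^{n-s}G_-+\rho^sG_+$ with $G_\pm\in H^\infty(X)$ from Propositions \ref{poisson}--\ref{holomorphy}; this follows from the derivative analogues of \eqref{P(s)L^2} (which hold by Corollary \ref{corRe(s)>k/2} and Proposition \ref{smoothness}) since $\phi$ has compact interior support, but it deserves a sentence.
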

\begin{proof} By Lemma \ref{kernelS}, the Schwartz kernel of $S_X(s)$ in $\{{\rm Re}(s)<n/2\}$ lies in $L^1_{\rm loc}(\pl\bbar{X}\x
\pl\bbar{X})$, and from Corollary \ref{corRe(s)>k/2}, we also have $S_X(s;b,\cdot )\in L^2(\pl\bbar{X}\setminus B_\eps(b),
dv_{\pl\bbar{X}})$ for all $b\in\pl\bbar{X}$, where $B_\eps(b)$ is a ball of small radius $\eps>0$ in $\pl\bbar{X}$. Now fix 
$m\in X, b\in \pl\bbar{X}$ and ${\rm Re}(s)\in ((n-1)/2,n/2)$, and multiply \eqref{fcteqR} by $(2s-n)\rho(m')^{-s}$ and let
$m'\to b$. By \eqref{P(s)smooth}, \eqref{P(s)L^2} and the decay and regularity properties of $S(s;b,b')$ stated above, we deduce that 
\[
P_X(s;m,b)=\int_{\pl\bbar{X}} P_X(n-s; m,b')S_X(s;b,b')\, dv_{\pl\bbar{X}}(b').
\]
In particular, the integral converges. From the symmetry of the resolvent, we also have $S_X(s;b,b')=S_X(s;b',b)$,
so $P_X(s)=P_X(n-s)S_X(s)$ for ${\rm Re}(s)\in ((n-1)/2,n/2)$ as operators $\calC_0^\infty(\pl\bbar{X}) \to \calC^\infty(X)$. 
However, this extends to $|{\rm Re}(s)-n/2|\leq 1/2$ meromorphically, in view of the mapping properties of $S(s)$ and $P(s)$. 
The functional equation for $S(s)$ is an easy consequence: one has for ${\rm Re}(s)=n/2$ (and $s\not=n/2$) 
\[ 
P_X(n-s)=P_X(s)S_X(n-s)=P_X(n-s)S_X(s)S_X(n-s)
\]
as operators on $H^\infty(\pl\bbar{X})$, but $P_X(n-s)$ is injective on $H^\infty(\pl\bbar{X})$ by construction.
\end{proof}

\appendix
\section{Bessel functions}
We gather some definitions and estimates regarding Bessel functions; all of this can be found in \cite[Chap. 9]{AbSt}. 
For $\alpha\in\rr,\nu\in\cc$ and $z\in\rr^+$, the Bessel (resp. modified Bessel) 
functions $J_\alpha,H^{(1)}_\alpha$ (resp. $I_\nu, K_\nu$) are defined by    
\begin{equation}\label{besseldef}
\begin{gathered}
J_\alpha(z):=\sum_{m=0}^\infty \frac{(-1)^m}{m!\Gamma(m+\alpha+1)}(\frac{z}{2})^{2m+\alpha},
 \quad H^{(1)}_\alpha(z):=\frac{(J_{-\alpha}(z)-e^{-\alpha\pi i}J_{\alpha}(z))}{i\sin(\alpha\pi)}\\
I_{\nu}(z):= \sum_{m=0}^\infty \frac{1}{m!\Gamma(m+\nu+1)}(\frac{z}{2})^{2m+\nu},
\quad K_\nu(z):=\frac{\pi}{2}\frac{(I_{-\nu}(z)-I_{\nu}(z))}{\sin(\nu\pi)}. 
\end{gathered} 
\end{equation} 
They are independent solutions of the Bessel (resp. modified Bessel) equation on $\rr^+$, 
$z^2\pl_z^2u+z\pl_zu+(z^2-\alpha^2)u=0$ (resp. $z^2\pl_z^2u+z\pl_zu-(z^2+\nu^2)u=0$).
For $\nu\in\cc$ and $\alpha\in\rr$, we have
\begin{equation}\label{estbessel}
\begin{gathered}
|J_\alpha(z)|\leq \frac{2^{-\alpha}z^\alpha}{\Gamma(\alpha+1)}, \,\, \forall z,\alpha>0;
\quad  |J_\alpha(z)|=\calO(\frac{1}{\sqrt{z}})\textrm{ and } |H_\alpha^{(1)}(z)|=\calO(\frac{1}{\sqrt{z}}),\,\,  \textrm{as } z\to \infty ,\\ 
|I_\nu(z)| = \calO(\frac{e^{z}}{\sqrt{z}}), \textrm{ and } |K_\nu(z)|=\calO(\frac{e^{-z}}{\sqrt{z}}),\,\,\textrm{as }  z\to \infty 
\end{gathered}
\end{equation}
where the constants in each $\calO(\cdot)$ depend on $\nu,\alpha$.\\

\textbf{Ackowledgement}. C.G.\ is grateful to G.~Carron for his explanations of the Fourier decomposition of a flat $3$-dimensional  bundle. We thank L.~Guillop\'e for comments and the referees 
for their careful reading and suggestions. Part of this work was done while C.G.\ was visiting the Math. Dept. of Stanford. C.G.\ is supported by  ANR grant 09-JCJC-0099-01. R.M.\ is partially supported by NSF grant DMS-0805529.

\end{document}